\newtheorem{definition}[equation]{Definition}
\newtheorem{remark}[equation]{Remark}
\newtheorem{theorem}{Theorem}[section]
\newtheorem{proposition}[theorem]{Proposition}
\newtheorem{corollary}[theorem]{Corollary}
\newtheorem{lemma}[theorem]{Lemma}
\theoremstyle{definition}
\theoremstyle{remark}
\newtheorem{Notation}{Notation}[theorem]
\newtheorem{example}{Example}[theorem]
\newcommand\CC{\mathbb{C}}
\DeclareMathOperator\Hom{Hom}
\DeclareMathOperator\im{Im}
\DeclareMathOperator\HH{HH}
\newcommand{\dalia}[1]{{\ #1}}
\subjclass[2010]{Primary 16W50, 16D50; Secondary: 16E10, 16T15}
\date{}
\dedicatory{}
\title{Gerstenhaber structure on Hochschild cohomology of toupie algebras}
\author{Dalia Artenstein}
\address{}
\curraddr{Instituto de Matem\'atica y Estad\'\i stica ``Rafael Laguardia",
Facultad de Ingenier\'\i a, Universidad de la Rep\'ublica,
Julio Herrera y Reissig 565, Montevideo, Uruguay.}
\email{darten@fing.edu.uy}
\author{Marcelo Lanzilotta}
\address{}
\curraddr{Instituto de Matem\'atica y Estad\'\i stica ``Rafael Laguardia",
Facultad de Ingenier\'\i a, Universidad de la Rep\'ublica,
Julio Herrera y Reissig 565, Montevideo, Uruguay.}
\email{marclan@fing.edu.uy}
\author{Andrea Solotar}
\address{}
\curraddr{IMAS and Dto. de Matem\'{a}tica, Facultad de Ciencias Exactas y Naturales,
	Universidad de Buenos Aires, Ciudad Universitaria, Pabell\'{o}n 1, 1428, Buenos Aires,
	Argentina.}
\email{asolotar@dm.uba.ar}
\thanks{This work has been supported by the projects  UBACYT 20020130100533BA, PIP-CONICET
	112--201501--00483CO and PICT $2015_0366$ and MATHAMSUD-REPHOMOL. The third named author, is a
	research member of CONICET (Argentina).}
\thanks{}
\subjclass[2010]{Primary 16E40, 18G10; Secondary: 16D40, 16D90.}
\date{}
\begin{document}

\maketitle

\begin{abstract}
   {
    We study homological properties of a family of algebras called toupie algebras. Our main objective is to \dalia{obtain} the 
    Gerstenhaber structure of their Hochschild cohomology, with the purpose of describing the Lie algebra structure of the first 
    Hochschild cohomology space, together with the Lie module structure of the whole Hochschild cohomology.   
      }
\end{abstract}

\medskip

\textbf{Keywords:} Hochschild cohomology, Gerstenhaber algebra.


\section{Introduction}
\label{s:Intro}

 In this article we study homological properties of toupie algebras, first defined in \cite{CDHL}.
Toupie algebras combine features of canonical algebras with monomial algebras. Canonical algebras were introduced by Ringel 
in \cite{Ri}, see also \cite{BKL} for historical references about canonical algebras.

An algebra is toupie if it is a quotient of the path algebra of a finite quiver $Q$ which
has a source $0$, a sink $\omega$ and branches going from $0$ to $\omega$ by an ideal $I\subseteq Q_{\geq 2}$ generated by a set
containing two types of relations: monomial ones, which involve arrows of one branch each, and linear combinations of branches.

Canonical algebras are part of a more general class, the concealed-canonical algebras, see \cite{LP}. Since one of the properties 
distinguishing canonical algebras within the class of concealed-canonical algebras is that their quiver has only one sink and only 
one source \cite{Ri}, we
conclude that toupie algebras and concealed-canonical algebras only share the subfamily of canonical algebras.

Almost all toupie algebras are of wild representation type, see \cite{Art}. Toupie algebras are also special multiserial algebras, 
see \cite{GSc} for the definition, which are usually of wild representation type too. As a consequence, modules over toupie algebras 
are multiserial, that is non necessarily  direct finite sums of uniserial modules. The Hochschild cohomology of special multiserial 
algebras is still unknown except for some particular examples.

The Hochschild cohomology $\HH^{*}(A)$ of an algebra $A$, together with its associative algebra structure given by the cup product and 
its Gerstenhaber algebra structure,
is a derived invariant. Even having an explicit description of $\HH^{*}(A)$, the cup product and the Gerstenhaber bracket are 
not easy to compute. Bustamente proved in \cite{Bus} that the cup product of $\HH^{*}(A)$ is trivial for any triangular quadratic 
string algebra $A$, and the Gerstenhaber bracket vanishes for elements in cohomological degrees greater than $1$ when $A$ is a gentle triangular algebra. These are tame algebras with a particularly easy resolution.
Also, Redondo and Rom\'{a}n computed in \cite{RR1} the Gerstenhaber structure of the Hochschild cohomology of a triangular 
string algebra, showing that it is trivial in degrees greater than $1$. Subsequently, they  computed in \cite{RR2} the 
Gerstenhaber structure of the Hochschild cohomology of string quadratic algebras. In this case they gave conditions on the quiver 
associated to the string quadratic algebra in order to get non trivial cup product and Gerstenhaber bracket in degrees greater than $1$.
The first Hochschild cohomology space of an algebra $A$ is always a Lie algebra with the Gerstenhaber bracket.
Strametz \cite{St} described the Lie algebra structure of $\HH^{1}(A)$ for $A$ a finite dimensional monomial algebra.
Moreover, S\'{a}nchez-Flores made explicit in \cite{S} the Lie module structure of higher cohomology spaces 
$\HH^{n}(A)$ over the Lie algebra $\HH^{1}(A)$ when $A\simeq \mathds{k}Q/\langle Q_{2}\rangle$ 
--that is, radical square zero-- and $Q$ is either an oriented cycle of length $n$ or a finite quiver with no 
cycles.

In this article we describe the Lie algebra structure of $\HH^{1}(A)$, when $A$ is a toupie algebra, as well as the Lie module structure of $\HH^{n}(A)$ over $\HH^{1}(A)$.
For this, we construct a resolution of $A$ as $A$-bimodule, using technics of \cite{CS}. We shall see that the existence of non 
monomial relations will only have an effect in degrees $0, 1, 2$ of the resolution but this difference will considerably change 
the Lie structure of the first cohomology group.

Even if the dimensions of the $\mathds{k}$-vector spaces $\HH^{*}(A)$ are already known \cite{GL}, an explicit computation of 
these is needed for the description of the Gerstenhaber structure.

\medskip

The contents  of the article are as follows.
In Section \ref{Preliminaries} we fix notations and prove some preliminary results.
Section \ref{calculocohomo} is devoted to the computation of a $\mathds{k}$-basis of each Hochschild cohomology space, while 
in Section \ref{comp} we obtain the comparison morphisms between the reduced bar resolution and ours.

In Section \ref{Gerstenhaber} we prove that the Gerstenhaber bracket is zero in degrees greater that $1$, and we compute it 
when restricted to $\HH^1(A)$.

The description of $\HH^1(A)$ as a Lie algebra is given in Section \ref{Lie}, where we find necessary and sufficient conditions for 
it to be abelian and to be semisimple. We describe its centre and we prove that when $\mathds{k}=\CC$, it has a Lie subalgebra
isomorphic to $sl_a(\CC)$, where $a$ is the number of arrows from $0$ to $\omega$ in the quiver $Q$.
Finally, we prove  Theorem \ref{lie}, one of our main theorems.

In Sections \ref{H2comorep} and \ref{Hncomorep} we describe the Lie module structure of $\HH^n(A)$ for $n\ge 2$. The main results are 
Theorem \ref{descH2} and Theorem \ref{mainHn}. We end the article with an example.


\section{Preliminaries}\label{Preliminaries}

Let $\mathds{k}$ be a field of characteristic zero.

In this section we will recall some definitions, as well as some preliminary results.
\label{s:Preliminaries}
\subsection{$E$-reduced Bar resolution}
Given a finite dimensional $\mathds{k}$-algebra $A$ with radical $r$ such that $A=E\oplus r$ with $E$ a separable 
$\mathds{k}$-subalgebra $E\subset A$, Cibils proved in \cite[Lemma 2.1]{Cib} that the following complex is a projective $A$-bimodule
resolution of $A$, see also \cite{GS}
\[ \dots A\otimes_{E}\overline{A}^{\otimes_{E} 3}\otimes_{E}A  \stackrel{\overline{\delta_{2}}}{\longrightarrow} A\otimes_{E}\overline{A}^{\otimes_{E} 2}\otimes_{E} A  \stackrel{\overline{\delta_{1}}}{\longrightarrow} A\otimes_{E}\overline{A}\otimes_{E}A \stackrel{\overline{\delta_{0}}}{\longrightarrow} A\otimes_{E} A
  \stackrel{\overline{\delta_{-1}}}{\rightarrow} A  \rightarrow 0\]
where $\overline{A}=A/E$ and $\overline{\delta_{i}}$ is such that 
$\overline{\delta_{i}}(a_{0}\otimes_{E} \overline{a_{1}}\otimes_{E}\dots \otimes_{E} \overline{a_{i+1}}\otimes_{E} a_{i+2} )=
\delta_{i}(a_{0}\otimes a_{1}\otimes\dots \otimes a_{i+1}\otimes a_{i+2})$.  
From now on we will simply call it $\delta_{i}$ instead of $\overline{\delta_{i}}$.

 


We will give a contracting homotopy  $t=\{t_{i}\}_{i\geq -1}$ that we are going to use later to define the comparison maps
between this resolution and the minimal one:
\[t_{i}:A\otimes_{E}\overline{A}^{\otimes_{E}i}\otimes_{E}A\to A\otimes_{E}\overline{A}^{\otimes_{E}i+1}\otimes_{E}A\]
such that $\overline{\delta_{i+1}}\circ t_{i+1}+t_{i}\circ \overline{\delta_{i}}=id$.
We define $t_{i}(a_{0}\otimes_{E}\overline{a_{1}}\otimes_{E}\dots \otimes_{E}\overline{a_{i}}\otimes_{E}a_{i+1})=
1\otimes_{E}\overline{a_{0}}\otimes_{E}\overline{a_{1}}\otimes_{E}\dots \otimes_{E}\overline{a_{i}}\otimes_{E}a_{i+1}$.

This projective resolution will be called the $E$-reduced Bar resolution of $A$, $C_{BarE}(A)$ To simplify notation we will 
still denote by  $\delta_{n}$ the differentials of this complex.
It is possible to define the reduced cup product $\smile_{red}$ and the reduced Gerstenhaber bracket $[\quad,\quad]_{red}$
in terms of this resolution. Applying the functor  
$\Hom_{A^{e}}(-,A)$ 
to the $E$-reduced Bar resolution and considering the 
isomorphism
\[F_{X}:\Hom_{A^{e}}(A\otimes_{E} X\otimes_{E} A,A)\to \Hom_{E^{e}}(X,A)\] 
natural in $X$ given by 
\[F_{X}(\varphi)(x)=\varphi(1\otimes_{E} x\otimes_{E} 1),\] 
we obtain the following complex:
\[0\longrightarrow \Hom_{E^{e}}(E,A) \stackrel{\epsilon_{0}}{\longrightarrow} \Hom_{E^{e}}(\overline{A},A)\stackrel{\epsilon_{1}}{\longrightarrow}\Hom_{E^{e}}(\overline{A}\otimes_{E}\overline{A},A)\stackrel{\epsilon_{2}}
{\longrightarrow}\Hom_{E^{e}}(\overline{A}^{\otimes_{E}3},A) \longrightarrow \dots\]
The reduced cup product in $\bigoplus_{i=0}^{\infty}\Hom_{E^{e}}(\overline{A}^{\otimes_{E} i},A)$ is as follows: 
consider $\varphi\in \Hom_{E^{e}}(\overline{A}^{\otimes_{E} n},A)$  and $\phi\in \Hom_{E^{e}}(\overline{A}^{\otimes_{E} m},A)$, 
given $a_{1}, \dots ,a_{n+m}\in A$
\[\varphi\smile_{red}\phi(\overline{a_{1}}\otimes \dots\otimes \overline{a_{n+m}})=\varphi(\overline{a_{1}}\otimes\dots\otimes \overline{a_{n}})\phi(\overline{a_{n+1}}\otimes\dots \otimes \overline{a_{n+m}}).\] 
It is easy to prove that $\smile_{red}$ induces a product in Hochschild cohomology that coincides with the usual cup product, 
see remarks 2.3.14 and 2.3.19 of \cite{arten}.




Analogous modifications hold for the Gerstenhaber bracket, see Appendix B of \cite{San}. 

%
%
\subsection{Toupie algebras}
\begin{definition}
We recall some definitions from \cite{arten}.
A finite quiver is \emph{toupie} if it has a unique source and a unique sink and any other vertex 
is the source of exactly one arrow and the target of exactly one arrow. The source and the sink will be denoted $0$ and 
$\omega$, respectively.

Given a toupie quiver $Q$ and any admissible ideal $I\subset \mathds{k}Q$, $A= \mathds{k}Q/I$ will be called a 
\emph{toupie algebra}, the paths from $0$ to $\omega$ will be called \emph{branches}. The $j$-th branch will be denoted 
$\alpha^{(j)}$. The \emph{length} of a branch $\alpha^{(j)}$ --denoted by $|\alpha^{(j)}|$-- is the number of arrows in it.
\end{definition}

We have already observed that canonical algebras are toupie algebras. Besides canonical algebras there are many examples of toupie 
algebras, let us just mention the $n$-Kronecker algebras.

\medskip

From now on, $A= \mathds{k}Q/I$ will always denote a toupie algebra. There are four possible kinds of branches in $Q$ and we will 
consider the following order within the branches. First, let $Z=\{\alpha^{(1)},\dots ,\alpha^{(a)}\}$ be the set of arrows from 
$0$ to $\omega$. The branches $\alpha^{(a+1)}, \dots ,\alpha^{(a+l)}$ will be those of length greater than or equal to $2$ not 
involved in any relation. Next, $\alpha^{(a+l+1)},\dots ,\alpha^{(a+l+m)}$ will be the branches containing monomial relations and 
finally $\alpha^{(a+l+m+1)}, \dots ,\alpha^{(a+l+m+n)}$ will be the branches involved in non
monomial relations.

Denoting by $e_{x}$ the idempotent of $A$ corresponding to the vertex $x$, we define:
\begin{align*}
D:=dim_{ \mathds{k}}e_{0}Ae_{\omega}&=a+l+n-\#\{\text{linearly independent non monomial relations}\}.
\end{align*}

Given a finite set of $s$ equations generating the non monomial relations and having fixed an order of the branches, 
let $C=(a_{ij})\in \mathds{k}^{s\times n}$ be the matrix whose rows are the coefficients of each of these equations.
Replacing the given relations by those obtained from the reduced row echelon form of the matrix gives of course the same algebra; 
from now on we will always suppose that this matrix is already reduced.
Every non monomial relation will be of the form:
\[\rho_{i}=\alpha^{(k_{i})}+\sum_{j>k_{i}}b_{ij}\alpha^{(j)}.\]
We will call $W_{\rho_{i}}=\alpha^{(k_{i})}$ and $f_{\rho_{i}}=-\sum_{j>k_{i}}b_{ij}\alpha^{(j)}$, keeping in mind the idea that 
the word $W_{\rho_{i}}$ will be replaced in $A$ by $f_{\rho_{i}}$.
Let $\mathcal{R}$ be a minimal set of generators of $I$ containing $\rho_{i}$ for all $i$.
The set $\mathcal{R}$ will be the disjoint union of 
 $\mathcal{R}_{mon}$, consisting of monomial relations  and $\mathcal{R}_{nomon}=\{\rho_{i}\}$, the set of non monomial relations.

Since we are going to compute the Hochschild cohomology spaces of $A$, the first thing we need is a useful projective resolution of 
$A$ as $A$-bimodule; if possible a minimal one.


\subsection{The resolution}
We will next recall the definition of $n$-ambiguity from \cite{Sk}.


\begin{definition} Given $n\geq2$,
 \begin{enumerate}
  \item the path $p \in Q$ is a {\em left} $n$-{\em ambiguity} if there exist $u_0\in Q_1$ and 
   $u_1,\dots,u_n$ paths not in $I$ such that
    \begin{enumerate}[(i)]
      \item $p=u_0u_1\cdots u_n$,
      \label{def:nambiguedadiz1}
      \item for all $i$, and for any proper left divisor $d$ of $u_{i+1}$, the path $u_iu_{i+1}$ belongs to $I$ but 
      $u_id$ does not belong to $I$.
      \label{def:nambiguedadiz2}
    \end{enumerate}
   \item the path $p\in Q$ is a {\em right} $n$-{\em ambiguity} if there exist $v_0\in Q_1$ and
   $v_1,\dots,v_n$ paths not in $I$ such that
    \begin{enumerate}[(i)]
     \item $p=v_n\cdots v_0$,
     \item for all $i$, and for any proper right divisor $d$ of $v_{i+1}$, the path $v_{i+1}v_i$ belongs to  $I$ but $dv_i$ does not belong to $I$.
    \end{enumerate}
 \end{enumerate}
\end{definition}


As we have already mentioned, the subalgebra $E= \mathds{k}Q_{0}$ of $A$ is separable over $\mathds{k}$, so we can compute 
the resolution
relative to $E$. This resolution will come from a monomial order, see \cite{CS}. Let us fix an order
in $Q_{0}\cup Q_{1}$. For this, let us draw the toupie quiver $Q$ as follows

\medskip $$\xymatrix@R=6pt@C=6pt{  & & \ar[lld] \ar[ld]\bullet\save[]+<0pt,8pt>*{0}\restore  \ar[dr]\restore  \ar@/^4pc/[ddddd] \ar@/^6pc/[ddddd]&  \\
 \!\!\!\!\!\!\!\!\!\! _{e^{(1)}_1} \bullet \ar[d] & \!\!\!\!\!\!\!\!\!\!\!_{e^{(2)}_1} \bullet \ar[d] & \cdots  & \bullet\ar[d]& \\ 
  \vdots & \vdots &\cdots &\vdots& \,\,\,\,\,\,\,\,\,\,\,\,\,\, \cdots \\ \ar[d] & \ar[d] & & \ar[d]\\\!\!\!\!\! _{e^{(1)}_{n_1}} \bullet 
\ar@/^-1pc/[drr] &  \!\!\!\!\!_{e^{(2)}_{n_2}} \bullet \ar[dr] & \cdots & \bullet\ar[ld] & \\& & \bullet\save[]+<0pt,-8pt>*{\omega}\restore & & }$$      \vspace{.1 cm}\\

\vspace{.1 cm}


and for each $i$ denote 
$\{e_{j}^{(i)}/
1\leq j\leq n_{i}\}$ the set of vertices in the 
branch $\alpha^{(i)}$
such that
$e_{j}^{(i)}\neq 0$ and $e_{j}^{(i)}\neq \omega$.
Fix $\omega=min\ Q_{0}$, $0=max\ Q_{0}$ and $e_{j}^{(i)}<e_{k}^{(l)}$ if $l<i$ or if $i=l$ and $k<j$. For the arrows, 
fix $\alpha_{j}^{(i)}<\alpha_{k}^{(l)}$ if $l<i$ or $i=l$ and $k<j$. Let $\leq$ be the order in $\mathds{k}Q/I$ which is compatible 
with concatenation and
extends $<$. Choose $S=Mintip\ (I)$ as in \cite[Def. 2.8]{CS} for example. It is then known that $I$ is the two sided
ideal generated by $\{s-f_{s}\}_{s\in S}$ and that the reduction system $\mathfrak{R}=\{(s, f_{s})\}_{s\in S}$ is such that
every path is reduction unique, see for example \cite[Lemmas 2.4 and 2.10]{CS}. Explicitly, 
$\mathfrak{R}=\mathfrak{R}_{1}\cup \mathfrak{R}_{2}$,
where $\mathfrak{R}_{1}=\{(\sigma_{i},0)/ \text{$\sigma_{i}$ a monomial relation}\}$ and 
$\mathfrak{R}_{2}=\{(\alpha^{(k_{i})}, \Sigma_{j< k_{i}}b_{ij}\alpha^{(j)})/ a+l+m+1\leq k_{i}\leq a+l+m+n\}$.

In our situation, it is easy to see that all possible ambiguities arise from $\mathfrak{R}_{1}$, and since they correspond to the 
monomial part, they reduce to $0$. We denote by $\mathcal{B}$ the basis of $A$ obtained from the classes in $\mathds{k}Q/I$ of the 
irreducible paths.
From now on we will fix the reduction system. We are now ready to construct the resolution.

\begin{Notation}
 Given a path $u\in Q$, whenever we write $\sum u^{(1)}\otimes u^{(2)}\otimes u^{(3)}\in A\otimes_{E}A\otimes_{E}A$ we are considering 
 the sum of all the possible factorizations of $u$ as product $u^{(1)}u^{(2)}u^{(3)}=u$. If $u^{(2)}$ is an arrow we write 
 $\sum u^{(1)}\otimes \overleftarrow{u^{(2)}}\otimes u^{(3)}$. If $\rho_{i}$ is a non monomial relation, we know that it is of the 
 form $\alpha^{(k_{i})}+\sum_{j> k_{i}} b_{ij}\alpha^{(j)}$, then when we write 
 $\sum [\rho_{i}]^{(1)}\otimes [\overleftarrow{\rho_{i}}]^{(2)}\otimes [\rho_{i}]^{(3)}$ we refer to 
 $\sum [\alpha^{(k_{i})}]^{(1)}\otimes \overleftarrow{[\alpha^{(k_{i})}]^{(2)}}\otimes [\alpha^{(k_{i})}]^{(3)}+
 \sum_{j> k_{i}}b_{ij}\sum [\alpha^{(j)}]^{(1)}\otimes\overleftarrow{ [\alpha^{(j)}]^{(2)}}\otimes [\alpha^{(j)}]^{(3)}$.
\end{Notation}

In low degrees, we already know that the extended minimal resolution is
\[A\otimes_{E} \mathds{k}\mathcal{R}\otimes_{E}A\stackrel{d_{1}}{\rightarrow}A\otimes_{E} \mathds{k}Q_{1}\otimes_{E}A\stackrel{d_{0}}{\rightarrow}A\otimes_{E}A\stackrel{\mu}{\rightarrow}A\rightarrow 0\]

As usual, $\mu$, $d_{0}$ and $d_{1}$ are the $A$-bimodule maps defined by;
\begin{align*}
\mu(a\otimes b)&=ab~~for~~a,b\in A,\\
d_{0}(1\otimes v\otimes 1)&=v\otimes 1-1\otimes v~~ for~~ v\in Q_{1},\\
d_{1}(1\otimes \sigma\otimes 1)&=\Sigma \sigma^{(1)}\otimes \overleftarrow{\sigma^{(2)}}\otimes\sigma^{(3)} ~~for~~\sigma\in \mathcal{R}.
\end{align*}
For $n\geq 2$, let us
denote $P_{n}=A\otimes_{E} \mathds{k}\mathcal{A}_{n}\otimes_{E}A$, where $\mathcal{A}_{n}$ is the set of $n$-ambiguities.

Given $v=v_{0}\cdots v_{n}=w_{n}\cdots w_{0}\in \mathcal{A}_{n}$, the differential $d_n:P_n\to P_{n-1}$ will be defined 
--using the same notation as before-- as follows:
 \begin{equation*}
     d_{n}(1\otimes v\otimes 1)=\left\lbrace
  \begin{array}{cc}

     \sum v^{(1)}\otimes v^{(2)}\otimes v ^{(3)},\quad\text{with } v^{(2)}\in \mathcal{A}_{n-1}&\text{ if                                                                   $n$ is odd,} \\

     w_{n}\otimes w_{n-1}\dots w_{0}\otimes 1-1\otimes u_{0}\dots u_{n-1}\otimes u_{n}, &\text{if $n$ is even.}\\

  \end{array}
  \right.
\end{equation*}

Let us call $C_{min}(A)$ the sequence
\[\dots\longrightarrow P_{n}\stackrel{d_{n}}{\longrightarrow}P_{n-1}\cdots
\longrightarrow P_{2}\stackrel{d_{2}}{\longrightarrow} A\otimes_{E} \mathds{k}\mathcal{R}\otimes_{E}A \stackrel{d_{1}}{\longrightarrow}A\otimes_{E} \mathds{k}Q_{1}\otimes_{E}A \stackrel{d_{0}}{\longrightarrow}A\otimes_{E}A \longrightarrow 0\]

\begin{lemma}
The sequence $C_{min}(A)$ is a complex of projective $A$-bimodules.
\end{lemma}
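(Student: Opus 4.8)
The plan is to verify the two defining properties of a complex of projective $A$-bimodules: that each term is indeed a projective $A$-bimodule, and that consecutive differentials compose to zero. Projectivity is immediate, since each $P_n = A \otimes_E \mathds{k}\mathcal{A}_n \otimes_E A$ (and likewise the low-degree terms built from $\mathds{k}\mathcal{R}$, $\mathds{k}Q_1$, and $A$) is a free $A$-bimodule on a basis indexed by the relevant set of $n$-ambiguities; induced modules of the form $A \otimes_E M \otimes_E A$ are projective over $A^e$ whenever $M$ is a projective $E$-bimodule, and $E = \mathds{k}Q_0$ makes every $E$-bimodule projective since $E$ is separable. So the substance of the lemma is entirely the verification that $d_{n-1} \circ d_n = 0$ for all $n$.

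First I would dispose of the low degrees by direct computation, namely $\mu \circ d_0 = 0$, $d_0 \circ d_1 = 0$, and $d_1 \circ d_2 = 0$, since these differentials are given by explicit formulas and the first two cases are completely standard. For $d_0 \circ d_1$, one uses that $d_1(1 \otimes \sigma \otimes 1) = \sum \sigma^{(1)} \otimes \overleftarrow{\sigma^{(2)}} \otimes \sigma^{(3)}$ telescopes under $d_0$, together with the crucial fact that $\sigma = W_\sigma - f_\sigma$ lies in $I$, so that $\sigma$ evaluates to zero in $A$; for the non-monomial relations one invokes the Notation conventions so that $\sum [\rho_i]^{(1)} [\overleftarrow{\rho_i}]^{(2)} [\rho_i]^{(3)}$ is interpreted as the correct linear combination over the branches appearing in $\rho_i$.

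The main work, and the step I expect to be the principal obstacle, is proving $d_{n-1} \circ d_n = 0$ for general $n \geq 2$, where the differential alternates in form between odd and even degrees. Here I would split into the cases according to the parities of $n$ and $n-1$ and chase the definitions. The key structural input is the factorization $v = v_0 \cdots v_n = w_n \cdots w_0$ of an $n$-ambiguity: an $n$-ambiguity overlaps both a left and a right $(n-1)$-ambiguity in a controlled way, and the alternating formulas are designed precisely so that the two ``boundary'' terms produced in the even-to-odd and odd-to-even passages cancel against each other. For the composition of two odd-type (or the relevant mixed) differentials, one must check that when $v^{(2)} \in \mathcal{A}_{n-1}$ is itself expanded, the resulting double sum of factorizations either telescopes or pairs up with a sign, using that in a toupie algebra \emph{all} ambiguities arise from $\mathfrak{R}_1$ and hence reduce to $0$ in $A$. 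That last fact, already established in the excerpt, is what guarantees that the ``interior'' contributions vanish upon reduction, leaving only the cancelling boundary terms.

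Rather than grinding through every index, I would organize the argument by noting that $C_{min}(A)$ is the specialization to toupie algebras of the general construction of \cite{CS}, which produces a complex from a reduction system with reduction-unique paths; since our reduction system $\mathfrak{R}$ satisfies the hypotheses of \cite{CS} (every path is reduction unique, and the ambiguities are exactly the elements of $\mathcal{A}_n$), the complex property $d_{n-1} \circ d_n = 0$ follows from the general theory, and the only genuinely new checking required is in the low degrees $n = 0, 1, 2$ where the non-monomial relations enter through the explicit formulas for $d_1$ and $d_2$. Thus the hard part is really bookkeeping: confirming that our explicit alternating differential agrees with the one coming from \cite{CS} and that the non-monomial relations, handled via the $[\rho_i]^{(j)}$ notation, do not break the cancellations that hold in the purely monomial case.
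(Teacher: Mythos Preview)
Your approach is essentially the same as the paper's: projectivity follows from separability of $E$, the high-degree compositions are handled by citing existing literature on resolutions built from ambiguities, and the low degrees where non-monomial relations intervene are checked by hand. The paper cites \cite{Sk} (Sk\"oldberg, for monomial algebras) rather than \cite{CS} for the generic case $n>2$, which is why it must also verify $d_2\circ d_3=0$ explicitly: the target of $d_2$ is $A\otimes_E\mathds{k}\mathcal{R}\otimes_E A$, and $\mathcal{R}$ contains non-monomial relations, so strictly speaking this composition is not covered by the purely monomial result in \cite{Sk}. Your plan to lean on \cite{CS} instead is legitimate and arguably cleaner, but then the burden really does shift to ``confirming that our explicit alternating differential agrees with the one coming from \cite{CS}'', which you flag but do not carry out; the paper avoids that identification by doing the $d_1\circ d_2$ and $d_2\circ d_3$ cases directly, and you should be prepared to do the same if the comparison with \cite{CS} is not immediate.
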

\begin{proof}
For $n>2$, the only relations involved in $P_{n}$ are monomial and the proof that $d_{n}\circ d_{n+1}=0$ is in \cite{Sk}.
We have to verify that $d_{1}\circ d_{2}=0$ and $d_{2}\circ d_{3}=0$.
Let us first prove that $d_{1}\circ d_{2}=0$. Given a $2$-ambiguity $v=v_{0}v_{1}v_{2}=w_{2}w_{1}w_{0}$,

\begin{align*}
d_{1}\circ d_{2}(1\otimes v\otimes 1)&=d_{1}(w_{2}\otimes w_{1}w_{0}\otimes 1-1\otimes v_{0}v_{1}\otimes v_{2})\\&=\Sigma w_{2}(w_{1}w_{0})^{(1)}\otimes \overleftarrow{(w_{1}w_{0})^{(2)}}\otimes (w_{1}w_{0})^{(3)}\\
& - \Sigma (v_{0}v_{1})^{(1)}\otimes \overleftarrow{(v_{0}v_{1})^{(2)}} \otimes (v_{0}v_{1})^{(3)}v_{2}.
\end{align*}
In case there is a term in the first summand not appearing in the second one; then $(w_{1}w_{0})^{(3)}$ must be a proper divisor of
$v_{2}$,  which implies that $\overleftarrow{(w_{1}w_{0})^{(2)}}(w_{1}w_{0})^{(3)}$ is a divisor of $v_{2}$ and so
$w_{2}(w_{1}w_{0})^{(1)}$ would be divisible by $v_{0}v_{1}$, which is $0$ in $A$.
A similar argument is used to prove that every non zero term that appears in the second sum appears in the first one too.
\bigbreak
Let us now prove that $d_{2}\circ d_{3}=0$.
Given a $3$-ambiguity $v=u_{0}u_{1}u_{2}u_{3}=w_{3}w_{2}w_{1}w_{0}$,
$$d_{2}\circ d_{3}(v)=d_{2}(\Sigma_{v}v^{(1)}\otimes v^{(2)}\otimes v^{(3)})$$
with $v^{(2)}\in \mathcal{A}_{2}$.
The set of $2$-ambiguities appearing in the decomposition of $v$ is finite, so we can order the summands by length of the first term
$v^{(1)}$. Notice that two different terms cannot have $v^{(1)}$'s of the same length: if this happens, then one of the $v^{(2)}$'s 
would be
strictly shorter that the other, and this contradicts Lemma 3.1.5 of \cite{arten}.

Let us order the summands as
\[v_{1}^{(1)}\otimes v_{1}^{(2)}\otimes v_{1}^{(3)}< \cdots < v_{n}^{(1)}\otimes v_{n}^{(2)}\otimes v_{n}^{(3)}\]
with $|v_{i}^{(1)}|<|v_{i+1}^{(1)}|$ for $i=1,...,n-1$.

Given $t$, $i\leq t\leq n$, we know after the second item of Lemma 3.1.6 of \cite{arten} that $v_{t}^{(2)}$ and $v_{t+1}^{(2)}$ both 
have only two divisors in $\mathcal{R}$, respectively $v'_{t}$, $v''_{t}$ and $v'_{t+1}$, $v''_{t+1}$. The first part of Lemma 3.1.6 
implies that $v'_{t+1}$ equals $v''_{t}$, because if not $v^{(2)}_{t}$ and $v^{(2)}_{t+1}$ would share a $2$-ambiguity, which is 
impossible.

Using the notation $v_{t}^{(2)}=v'_{t}u_{2,t}=w_{2,t}v'_{t+1}$,
and observing that $u_{2,1}=u_{2}$, since $v'_{1}=u_{0}u_{1}$, and that $w_{2,n}=w_{2}$ since $v'_{n+1}=w_{1}w_{0}$, we get
\begin{align*}
d_{2}\circ d_{3}(1\otimes v\otimes 1)&=-\Sigma_{t=1}^{n}v_{t}^{(1)}w_{2,t}\otimes v'_{t+1}\otimes v_{t}^{(3)}+\Sigma_{i=1}^{n}v_{t}^{(1)}\otimes v'_{t}\otimes u_{2,t} v_{t}^{(3)}\\&=-w_{3}w_{2}\otimes v'_{n+1}\otimes 1+1\otimes v'_{1}\otimes u_{2}u_{3}\\&=0.
\end{align*}
Notice that, since $E$ is separable, $E\otimes_{\mathds{k}}E^{op}$ is semisimple, thus, all the $E$-bimodules appearing in the
complex are projective, implying that the $A$-bimodules $P_n$ are projective.
\end{proof}



\begin{proposition}\label{complex}
The complex $C_{min}(A)$ is a projective resolution of $A$ as $A^{e}$-module.
\end{proposition}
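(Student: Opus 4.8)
The plan is to show that $C_{min}(A)$ is exact by comparing it with the $E$-reduced Bar resolution $C_{BarE}(A)$, which is already known to be a projective resolution of $A$ by Cibils' lemma. Since both complexes are complexes of projective $A$-bimodules and the preceding lemma establishes that $C_{min}(A)$ is at least a complex, it suffices to produce a quasi-isomorphism $C_{min}(A)\to C_{BarE}(A)$, or more directly to build an explicit contracting homotopy for $C_{min}(A)$, proving exactness by hand. Given that the paper has just recorded the contracting homotopy $t=\{t_i\}$ for the Bar resolution, I expect the intended route is the comparison one: use the standard fact that a map of complexes of projectives lifting the identity on $A$ is automatically a homotopy equivalence once one knows one of them is a resolution, so producing comparison morphisms in both directions settles exactness.

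The key steps I would carry out are as follows. First, I would construct a chain map $\iota\colon C_{min}(A)\to C_{BarE}(A)$ lifting $\id_A$, sending an $n$-ambiguity $v$ to the appropriate alternating tensor of its factorizations; in degrees $0$ and $1$ this is forced by the formulas for $d_0$ and $d_1$, and in higher degrees one uses the Notation for $\sum v^{(1)}\otimes v^{(2)}\otimes v^{(3)}$ together with the factorization $v=v_0\cdots v_n=w_n\cdots w_0$. Second, I would construct a chain map $\pi\colon C_{BarE}(A)\to C_{min}(A)$ in the other direction, defined recursively using the contracting homotopy $t$: one sets $\pi_n$ on a bar tensor by reducing each entry to the basis $\mathcal B$ of irreducible paths and then projecting onto the span of $n$-ambiguities, exactly as in the comparison-morphism machinery of \cite{CS}. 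Third, I would verify that $\pi\circ\iota$ is chain-homotopic to the identity (indeed I would try to arrange $\pi\circ\iota=\id$), which together with $C_{BarE}(A)$ being a resolution forces $C_{min}(A)$ to be exact, since the homology of $C_{min}(A)$ is a retract of the homology of an acyclic complex.

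Concretely, the cleanest self-contained argument avoids $\pi$ and instead transports the contracting homotopy: I would set $s_n=\iota_{n}^{-1}\,t_n\,\iota_{n-1}$ only on the image, or better, exhibit a contracting homotopy $\{h_n\}$ for $C_{min}(A)$ directly by the same "multiply by $1\otimes$ and reduce'' recipe that defines $t$, checking $d_{n+1}h_{n+1}+h_n d_n=\id$ degree by degree. The verification in degrees $0,1,2$ uses the explicit differentials and the reduction-uniqueness of every path (so that each element of $A$ has a unique expression in the basis $\mathcal B$), while for $n\ge 3$ it is the purely monomial situation handled in \cite{Sk}, where the ambiguities reduce to $0$. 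The key structural inputs are that $S=\operatorname{Mintip}(I)$ makes every path reduction unique and that all ambiguities come from $\mathfrak R_1$, so the non-monomial relations affect only degrees $\le 2$.

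The main obstacle will be the bookkeeping at the junction between degrees $1$ and $2$, where the non-monomial relations $\rho_i$ enter. Here the tensor $\sum[\rho_i]^{(1)}\otimes[\overleftarrow{\rho_i}]^{(2)}\otimes[\rho_i]^{(3)}$ mixes several branches with coefficients $b_{ij}$, and one must check that the comparison map (or homotopy) respects these linear combinations and that $d_1$ applied to the lifted $2$-ambiguities reproduces exactly the Bar differential modulo boundaries. I expect this to require care in showing that no spurious terms survive after reduction, leaning on the same divisibility analysis used in the proof that $d_1\circ d_2=0$ and on Lemmas~3.1.5 and~3.1.6 of \cite{arten} to control which $2$-ambiguities appear. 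Once this compatibility is verified, exactness of $C_{min}(A)$ follows formally from exactness of $C_{BarE}(A)$.
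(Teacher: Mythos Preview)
Your comparison-morphism strategy has a circularity problem. To build $\pi\colon C_{BarE}(A)\to C_{min}(A)$ recursively you need, at each step, to lift a map landing in $\ker d_{n-1}$ through $d_n$; this requires exactness of $C_{min}(A)$ at degree $n-1$, which is precisely what you are trying to prove. The contracting homotopy $t$ lives on the Bar side and lets you build maps \emph{into} $C_{BarE}(A)$ (this is how the paper later constructs $\varphi$), not out of it. Likewise your formula $s_n=\iota_n^{-1}\,t_n\,\iota_{n-1}$ is not meaningful: $\iota_n$ is far from invertible, and there is no reason for $t_n\,\iota_{n-1}$ to land in the image of $\iota_n$.

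Your fallback---``exhibit a contracting homotopy $\{h_n\}$ for $C_{min}(A)$ directly''---is the correct route and is exactly what the paper does. The paper first observes that exactness is already a consequence of \cite[Theorem~4.1]{CS} (the reduction system satisfies the hypotheses there), so nothing further is logically required; it then writes down an explicit left-$A$-linear contracting homotopy $\{s_i\}_{i\ge -1}$ on $C_{min}(A)$, not for the sake of this proposition but because these maps are needed in Section~\ref{comp} to define the comparison morphism $\eta$. The formulas are concrete: $s_{-1}(a)=a\otimes e_{t(a)}$, $s_0(e_{s(a)}\otimes a)=-\sum a^{(1)}\otimes\overleftarrow{a^{(2)}}\otimes a^{(3)}$, a case-by-case $s_1$ distinguishing $\alpha a=W_{\rho_i}$, $\alpha a=\sigma_j b$, or neither, and for $i\ge 2$ the map $s_i$ picks out the $i$-ambiguities contained in the concatenated path with sign $(-1)^{i+1}$. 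So the substantive content you anticipated at the degree-$1$/degree-$2$ junction is handled by the explicit definition of $s_1$, not by any transport from the Bar complex.
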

\begin{proof}
%
%
The only thing left to prove is the exactness of the complex, and this follows from \cite[Theorem 4.1]{CS}.
Nevertheless, we shall construct a contracting homotopy $\{s_{i}\}_{i\geq -1}$ that will be useful in Section \ref{comp}.
 For $i=-1$ and $a\in A$, let
$$s_{-1}(a)=a\otimes e_{t(a)}.$$

For $i\geq 0$, let $a$ be an element of the $\mathds{k}$-basis of irreducible paths $\mathcal{B}$. It is sufficient to define $s_{i}$ 
on the elements $e_{s(w)}\otimes w\otimes a\in P_{i}$ and extend it to a morphism of left $A$-modules.
Set
$$ s_{0}(e_{s(a)}\otimes a)=  -\sum a^{(1)}\otimes \overleftarrow{a^{(2)}}\otimes a^{(3)},$$

\begin{equation*}
s_{1}(e_{s(\alpha)}\otimes \alpha\otimes a)=\left\{
                                              \begin{array}{ll}
                                                e_{0}\otimes \rho_{i}\otimes e_{\omega} , & \hbox{if  $\alpha a=W_{\rho_{i}}$,} \\
                                                e_{s(\alpha)}\otimes \sigma_{j}\otimes b, & \hbox{if $\alpha a=\sigma_{j}b$ in $Q$ with $\sigma_{j}$ monomial,} \\
                                                0, & \hbox{otherwise.}
                                              \end{array}
                                            \right.
\end{equation*}

For $i= 2$, given $r\in \mathcal{R}$, set $$s_{2}(e_{s(r)}\otimes r \otimes a)=-\sum (ra)^{(1)}\otimes (ra)^{(2)}\otimes (ra)^{(3)}$$
 with $(ra)^{(2)}\in \mathcal{A}_{2}$ if $ra$ contains a $2$-ambiguity and zero otherwise.
Observe that if $r=\rho_{i}$ is a non monomial relation, then $s_{2}(e_{s(r)}\otimes r\otimes a)=0$.

For $i> 2$, given $w\in \mathcal{A}_{i-1}$, $$s_{i}(e_{s(w)}\otimes w \otimes a)=(-1)^{i+1}\sum (wa)^{(1)}\otimes (wa)^{(2)}\otimes (wa)^{(3)}$$
 with $(wa)^{(2)}\in \mathcal{A}_{i}$ if $wa$ contains an $i$-ambiguity, and zero otherwise.
 It is straightforward to verify that $\{s_{i}\}_{i\geq -1}$ is a contracting homotopy.
 \end{proof}

\begin{remark}
To check that the projective resolution is minimal consider $r^{e}$, the Jacobson radical of $A^{e}$. This radical is generated by the elements of the form $\alpha\otimes e_{j}$ and $e_{j}\otimes\beta$ with  $i,j \in Q_{0}$ and $\alpha, \beta\in Q_{1}$ since $rad(A\otimes A^{op})=A\otimes rad(A^{op})+rad(A)\otimes A^{op} $.
It is easy to see that $Im(d_{i})\subset P_{i-1}r^{e}$, so we conclude that $C_{min}(A)$ is minimal.
\end{remark}

\section{Computation of Hochschild Cohomology}\label{calculocohomo}

In this section we will construct an explicit basis for each cohomology  space of $A$. The knowledge of such bases will be
useful for the computation of the deformations of toupie algebras and for description of the Gerstenhaber structure.
Applying the functor $\Hom_{A^{e}}(-,A)$ to the minimal resolution $C_{min}(A)$
and using again the canonical isomorphism $
F_{X}:\Hom_{A^{e}}(A\otimes_{E} X\otimes_{E} A, A)\rightarrow \Hom_{E^{e}}(X,A)$, that we will simply denote $F$,
we obtain the following complex:
\[0\longrightarrow \Hom_{E^{e}}(E,A)\stackrel{D_{0}}{\longrightarrow} \Hom_{E^{e}}( \mathds{k}Q_{1},A)\stackrel{D_{1}}{\longrightarrow}\Hom_{E^{e}}( \mathds{k}\mathcal{R},A)\stackrel{D_{2}}{\longrightarrow}\Hom_{E^{e}}( \mathds{k}\mathcal{A}_{2},A) \longrightarrow \dots\]
where $D_{i}=F\circ \Hom_{A^{e}}(d_{i}, A)\circ F^{-1}$ for all $i$.

For any $E$-bimodule $W$ and any $f\in \Hom_{E^{e}}(W,A)$, the equalities $e_{i}f(w)e_{j}=f(e_{i}we_{j})$,
for every $i,j\in Q_{0}$ and $w\in W$ mean that $w$ and $f(w)$ share source and target. Using Cibils's notation in \cite{Cib}, 
we will write $w\|f(w)$ for the morphism
in $\Hom_{E^{e}}(W,A)$ sending a basis element $w$ of $W$ to $f(w)$ and the other basis elements to zero. Also, given sets 
$H$ and $G$, we will
denote $ \mathds{k}(H\| G)$ the $\mathds{k}$-span of $\{h\| g\}_{h\in H, g\in G}$.

Let us denote ${}_{0}\mathcal{B}_{\omega}$ the subset of all branches belonging to $\mathcal{B}$.

We will next describe explicitly the vector spaces and the differentials appearing in the complex.

\medskip

Since toupie algebras have no cycles, $\Hom_{E^{e}}(E,A)$ is just $ \mathds{k}(Q_{0}\|Q_{0})$. 
A careful look shows that the spaces appearing in degrees $1$ are $2$ are respectively
\[\Hom_{E^{e}}( \mathds{k}Q_{1},A)= \mathds{k}(Q_{1}\| Q_{1})+ \mathbb{ \mathds{k}}(Z\| ({}_{0}\mathcal{B}_{\omega}-Z)).\]
\[\Hom_{E^{e}}( \mathds{k}\mathcal{R},A)= \mathds{k}(\mathcal{R}_{nomon}\|{}_{0}\mathcal{B}_{\omega})+ \mathds{k}(_{0}(\mathcal{R}_{mon})_{\omega}\|{}_{0}\mathcal{B}_{\omega}).\]
For $i\geq 2$,
\[\Hom_{E^{e}}( \mathds{k}\mathcal{A}_{i},A)= \mathds{k}({}_{0}(\mathcal{A}_{i})_{\omega}\|{}_{0}\mathcal{B}_{\omega}).\]

The differentials are as follows. Given $j\in Q_{0}$ and $e_{j}\| e_{j}\in \Hom_{E^{e}}(E, A)$,
$$D_{0}(e_{j}\| e_{j})=\sum_{\alpha: t(\alpha)=j}\alpha\| \alpha-\sum_{\beta: s(\beta)=j}\beta\|\beta.$$
Given $\alpha_{r}\in Q_{1}$,
\begin{equation*}
     D_{1}(\alpha_{r}\| \alpha_{r})=\left\lbrace
  \begin{array}{l}
  \sum_{i:\alpha^{(j)}\in \rho_{i}} \rho_{i}\| b_{ij}\alpha^{(j)}, ~~  \text{if } ~~ \alpha_{r} ~~\text{is}~~\text{in}~~ \alpha^{(j)}\text{ and $\alpha^{(j)}$} \text{ is a branch of $\rho_{i}$},  \\
     0,~~ \qquad\qquad\qquad
     \qquad \text{otherwise.}
  \end{array}
  \right.
\end{equation*}
For $\alpha^{(h)}\| \alpha^{(j)}\in Z\| ({}_{0}\mathcal{B}_{\omega}-Z)$,
\begin{equation*}
  D_{1}(\alpha^{(h)}\| \alpha^{(j)})=0.
\end{equation*}
Finally, $D_{i}=0$ for $i\geq 2$.\bigbreak

An easy verification shows that, as it is well known when the quiver contains no oriented cycles, $\HH^{0}(A)=\langle\sum_{i\in Q_{0}}e_{i}\| e_{i}\rangle$.

\subsection{Computation of $\HH^{1}(A)$}\label{caluloH1}
To compute $\HH^{1}(A)$ we first obtain a basis of $Ker D_{1}$ that we modify afterwards so that it contains a basis of 
$Im D_{0}$.
For that purpose, using the branches of the quiver of length greater than $1$ that do not contain monomial relations, we will 
construct a non oriented  graph $Q_{\rho}$ as follows.
\begin{definition}\label{qro}
The graph $Q_{\rho}$ is the following:
\begin{enumerate}
  \item its vertices are in bijection with the branches in $Q$ that are not arrows and do not contain monomial relations,
  labeled under the name of the corresponding branch,
  \item there is an edge between the vertices $\alpha^{(k)}$ and $\alpha^{(p)}$ if there exists a relation that involves both branches.
\end{enumerate}
We will denote $Q_{\rho}^{k}$, $1\leq k\leq r$, the connected components of $Q_{\rho}$.
\end{definition}
\begin{remark}\label{dimH1}
We recall the dimension of $\HH^{1}(A)$, computed in \cite{GL}: 
\[dim_{ \mathds{k}}\HH^{1}(A)=r+m+Da-1.\]
\end{remark}
Let us define the following four subsets of $ \mathds{k}(Q_{1}\|\mathcal{B})$:
\begin{enumerate}

\item $C_{1}=\{\alpha^{i}_{j}\| \alpha^{i}_{j}:\alpha^{(i)}\text{ is a branch which contains monomial relations}\}$.
\item $C_{2}=Z\|_{0}\mathcal{B}_{\omega}$.

\item $C_{3}=\{\alpha^{i}_{j}\| \alpha^{i}_{j}- \alpha^{i}_{0}\|\alpha^{i}_{0}:j\neq 0, |\alpha^{(i)}|>1 \text{ and $\alpha^{(i)}$ does not contain monomial relations}\}.$
\item $C_{4}=\{\sum_{\alpha^{(i)} \in Q_{\rho}^{k} } \alpha^{i}_{0}\| \alpha^{i}_{0}: k=1\dots, r\}$.
\end{enumerate}
\begin{lemma}
The set $U=\bigcup_{i=1}^{4}C_{i}$ is a basis of $Ker D_{1}$.
\end{lemma}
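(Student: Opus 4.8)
The plan is to prove the statement in three stages: first exhibit that every element of $U$ lies in $\ker D_1$, then prove linear independence of $U$, and finally a dimension count (or a direct spanning argument) to conclude that $U$ is actually a basis. I would begin by recalling the explicit formula for $D_1$: it vanishes on all generators $\alpha^{(h)}\|\alpha^{(j)}$ with $\alpha^{(h)}\in Z$, and on a generator $\alpha_r\|\alpha_r$ it produces $\sum_{i:\alpha^{(j)}\in\rho_i}\rho_i\|b_{ij}\alpha^{(j)}$ precisely when $\alpha_r$ sits in a branch $\alpha^{(j)}$ appearing in some non-monomial relation $\rho_i$, and $0$ otherwise.

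For the first stage I would check $C_1,\dots,C_4\subseteq\ker D_1$ term by term. The elements of $C_1$ involve arrows in branches with monomial relations, which are not branches of any $\rho_i$, so $D_1$ kills them; the elements of $C_2$ lie in $Z\|{}_0\mathcal{B}_\omega$, on which $D_1=0$ by the second displayed formula. The subtle cases are $C_3$ and $C_4$. For $C_3$ the branch $\alpha^{(i)}$ has length $>1$ and carries no monomial relation, so it is one of the $\rho$-branches; I would show that the two arrows $\alpha^i_j$ and $\alpha^i_0$ produce the \emph{same} value under $D_1$, namely $\sum_{i':\alpha^{(i)}\in\rho_{i'}}\rho_{i'}\|b_{i'i}\alpha^{(i)}$, because $D_1$ only records which branch the arrow belongs to and not its position within the branch; hence their difference is annihilated. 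For $C_4$, I would sum the contributions of the leading arrows $\alpha^i_0$ over a connected component $Q^k_\rho$; the key point is that for each relation $\rho_{i'}$ the branches it involves are exactly the endpoints of an edge in $Q_\rho$, so summing over a whole connected component makes the coefficient of each $\rho_{i'}\|(\cdot)$ collapse to the full relation $\rho_{i'}$ evaluated on itself, which I expect to vanish because $\rho_{i'}$ is a relation (i.e. it equals $0$ in $A$). This compatibility between the combinatorics of $Q_\rho$ and the coefficients $b_{i'j}$ is the technical heart of the argument and the step I expect to be the main obstacle.

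For the second stage, linear independence, I would argue that the four families live in almost-disjoint coordinate directions of $\mathds{k}(Q_1\|\mathcal{B})$: $C_2$ uses target words in ${}_0\mathcal{B}_\omega$ that are not arrows (so they are disjoint from the diagonal terms $\alpha\|\alpha$), $C_1$ uses arrows in monomial branches, while $C_3$ and $C_4$ use arrows in the $\rho$-branches. Within the $\rho$-branches, the $C_3$ elements are differences $\alpha^i_j\|\alpha^i_j-\alpha^i_0\|\alpha^i_0$ for $j\neq 0$, whereas $C_4$ uses only the $j=0$ diagonal terms; a triangularity or direct-projection argument separates them, and the only possible linear relations among the $C_4$ vectors would come from the connected components of $Q_\rho$ sharing a branch, which by construction they do not. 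I would make this precise by projecting onto the coordinate $\alpha^i_0\|\alpha^i_0$ for one chosen branch in each component.

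For the final stage I would invoke the dimension formula $\dim_\mathds{k}\HH^1(A)=r+m+Da-1$ from Remark \ref{dimH1} together with $\dim\operatorname{Im}D_0$ to pin down $\dim\ker D_1$, and then count $|U|$. I expect the cardinalities to match: $C_1$ contributes the arrows of the $m$ monomial branches, $C_2$ contributes $a\cdot(\#\,{}_0\mathcal{B}_\omega)$-type terms governed by $D$ and $a$, $C_3$ contributes the within-branch differences, and $C_4$ contributes exactly $r$ vectors, one per connected component. Rather than matching $|U|$ to $\dim\ker D_1$ head-on, a cleaner route may be to show $U$ \emph{spans} $\ker D_1$ directly: take an arbitrary cocycle, use the $C_2$ and $C_1$ parts to clear all off-diagonal and monomial-branch components, and then analyze the remaining diagonal part supported on $\rho$-branches, where the cocycle condition forces the coefficients to be constant along each branch (giving $C_3$) and to satisfy the per-component balance condition (giving $C_4$). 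I would present whichever of the two—counting versus spanning—makes the collapse in $C_4$ most transparent, since that is where the real content lies.
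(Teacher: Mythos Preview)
Your plan is correct and matches the paper's proof, which is a pure dimension count: from $\dim_{\mathds{k}}\HH^1(A)=r+m+Da-1$ and knowledge of $\dim\operatorname{Im} D_0$ one gets $\dim\ker D_1=r+m+Da+\#Q_0-2$, and then one checks $U$ is linearly independent with $\#C_1+\#C_3=\#Q_0-2+m$, $\#C_2=Da$, $\#C_4=r$. Two small inaccuracies in your outline to fix: (i) $C_2=Z\|{}_0\mathcal{B}_\omega$ \emph{does} contain diagonal elements $\alpha^{(i)}\|\alpha^{(i)}$ for $\alpha^{(i)}\in Z$, so linear independence from $C_1,C_3,C_4$ comes from the \emph{source} arrow lying in $Z$ versus a branch of length $\geq 2$, not from the target; (ii) the branches appearing in $C_3$ and as vertices of $Q_\rho$ include the $l$ free branches of length $\geq 2$ with no relation at all, not only branches in some $\rho_i$---for these $D_1$ vanishes trivially, so your arguments for $C_3$ and $C_4$ still go through unchanged.
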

\begin{proof}
Since $dim_{ \mathds{k}}\HH^{1}(A)=r+m+Da-1$, our knowledge of the dimension of $\im(D_0)$ implies that $dim_{ \mathds{k}} Ker D_{1}=r+m+Da+\# Q_{0}-2$. By construction, $U$ is linearly independent. 
Observe that the disjoint union $C_{1}\cup C_{3}$ is in bijection with the set consisting of all the arrows that do not start 
in $0$ and the arrows of the monomial part that start in $0$, so $\# C_{1}+\# C_{3} =\# Q_{0}-2+m$. 
Since clearly $\# C_{2}=Da$ and $\# C_{4}=r$ we conclude that $\# U=r+m+Da+\# Q_{0}-2$.
\end{proof}

Next we will replace $U$ by another basis containing a basis of $Im D_{0}$; we will proceed as follows.
\begin{enumerate}
\item Replace each element $\alpha^{i}_{j}\| \alpha^{i}_{j}$ in $C_{1}$ with $j\neq 0$ by $\alpha^{i}_{j}\| \alpha^{i}_{j}-
\alpha^{i}_{0}\| \alpha^{i}_{0}$. 
Let us call $C_{1}'$ the set consisting of the modified elements and 
\[C''_{1}=\{\alpha_{0}^{i}\|\alpha_{0}^{i}: \alpha^{(i)} 
\hbox{ is a branch that contains monomial relations}\},\] its complement in $C_1$.
\item

    \begin{itemize}

    \item If $a>0$, first replace the elements in $C_{2}$ of the form $\alpha^{(i)}\|\alpha^{(i)}$ with $i\neq 1$ by 
    $\alpha^{(i)}\|\alpha^{(i)}-\alpha^{(1)}\|\alpha^{(1)}$ and then replace $\alpha^{(1)}\|\alpha^{(1)}$ by 
    $s=\sum_{\alpha^{(i)}} \alpha^{i}_{0}\| \alpha^{i}_{0}$ where the sum ranges over all the branches of the quiver.
    \item If $a=0$, let us call $\alpha^{(B)}$ the last branch of the toupie algebra and consider the element of $U$ in 
    $C_{1} \bigcup C_{4}$ that contains $\alpha_{0}^{B}\|\alpha_{0}^{B}$ as a summand. Replace this element by 
    $s=\sum_{\alpha^{(i)}} \alpha^{i}_{0}\| \alpha^{i}_{0}$ where the sum ranges over all the branches of the quiver.
     \end{itemize}
\end{enumerate}

Let us call $\widehat{U}$ the set obtained from $U$ in this way. After some direct and tedious computations, it turns out that the 
$\mathds{k}$-vector spaces generated by $U$ and $\widehat{U}$ coincide.

\begin{lemma}
The set $K=\{s\}\bigcup C'_{1}\bigcup C_{3}$ is a basis of $Im D_{0}$.
\end{lemma}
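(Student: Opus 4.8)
The plan is to show that $K=\{s\}\cup C_1'\cup C_3$ is a basis of $\im D_0$ by first computing $\im D_0$ explicitly and then matching it against $K$. Recall that $D_0(e_j\|e_j)=\sum_{\alpha:t(\alpha)=j}\alpha\|\alpha-\sum_{\beta:s(\beta)=j}\beta\|\beta$, so $\im D_0$ is spanned by the $\#Q_0$ images $D_0(e_j\|e_j)$. Since $Q$ has no oriented cycles and $\HH^0(A)=\langle\sum_{i\in Q_0}e_i\|e_i\rangle$, the kernel of $D_0$ is one-dimensional, whence $\dim_{\mathds k}\im D_0=\#Q_0-1$. I would then verify that $\#K=\#Q_0-1$ as well: the set $C_1'$ contributes one element for each arrow of a monomial branch that does not start at $0$, the set $C_3$ contributes one element for each arrow of a non-monomial-containing branch of length $>1$ that does not start at $0$, and $s$ contributes one more. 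Counting together with the earlier bijection $\#C_1+\#C_3=\#Q_0-2+m$ (and subtracting the $m$ elements of $C_1''$, which start at $0$) gives exactly $\#Q_0-1$, so a dimension count will reduce the problem to proving one inclusion.

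Next I would establish $K\subseteq\im D_0$, element by element. For a branch $\alpha^{(i)}$ of length $>1$ and an internal vertex $e^{(i)}_j$ (the target of $\alpha^{i}_{j-1}$ and source of $\alpha^{i}_{j}$), applying $D_0(e^{(i)}_j\|e^{(i)}_j)$ produces $\alpha^{i}_{j}\|\alpha^{i}_{j}-\alpha^{i}_{j-1}\|\alpha^{i}_{j-1}$, since each interior vertex has exactly one incoming and one outgoing arrow. Summing these telescoping differences along the branch from vertex $0$ yields $\alpha^{i}_{j}\|\alpha^{i}_{j}-\alpha^{i}_{0}\|\alpha^{i}_{0}$; this is precisely the form of the elements of $C_1'$ (for monomial branches) and of $C_3$ (for non-monomial branches of length $>1$), so all of these lie in $\im D_0$. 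For the element $s=\sum_{\alpha^{(i)}}\alpha^{i}_{0}\|\alpha^{i}_{0}$, I would show it equals $-D_0(e_0\|e_0)$: since $0$ is the unique source, it has no incoming arrows, so $D_0(e_0\|e_0)=-\sum_{\beta:s(\beta)=0}\beta\|\beta=-\sum_{\alpha^{(i)}}\alpha^{i}_{0}\|\alpha^{i}_{0}=-s$, giving $s\in\im D_0$.

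Having shown $K\subseteq\im D_0$ and $\#K=\dim_{\mathds k}\im D_0$, it remains to check that $K$ is linearly independent, which then forces $K$ to be a basis. Linear independence is clear because $C_1'$ and $C_3$ involve pairwise distinct arrows $\alpha^i_j$ with $j\neq 0$ as their "leading" terms (each such arrow appears in exactly one element), while $s$ is the only element supported entirely on arrows $\alpha^i_0$ starting at $0$; hence no nontrivial combination can vanish. I expect the main obstacle to be the bookkeeping in the dimension count: one must carefully track which arrows start at $0$ versus elsewhere across the four branch types (arrows of $Z$, monomial branches, non-monomial branches, and branches not involved in relations) and confirm that removing the $C_1''$ part and adding the single generator $s$ lands exactly on $\#Q_0-1$. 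Once the counting is reconciled with the earlier computation $\#C_1+\#C_3=\#Q_0-2+m$, the telescoping argument and the independence check are routine, and the three facts combine to prove that $K$ is a basis of $\im D_0$.
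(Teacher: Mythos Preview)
Your proposal is correct and follows essentially the same approach as the paper: compute $\dim_{\mathds{k}}\im D_0=\#Q_0-1$ via rank--nullity, check $K\subset\im D_0$, count $\#K=\#Q_0-1$, and verify linear independence. The paper's proof merely asserts these facts in one sentence each, whereas you spell out the telescoping sums and the leading-term argument explicitly; note, however, a harmless sign slip in your formula for $D_0(e^{(i)}_j\|e^{(i)}_j)$, which by the stated convention equals $\alpha^i_{j-1}\|\alpha^i_{j-1}-\alpha^i_j\|\alpha^i_j$ rather than the reverse.
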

\begin{proof}
We know that $dim_{ \mathds{k}}Im D_{0}=\#Q_{0}-1$ because on one hand $dim_{ \mathds{k}}Ker D_{0}=1$ and on the other 
hand $dim_{ \mathds{k}}\Hom_{E^{e}}(E,A)=\#Q_{0}$. Since $K\subset Im D_{0}$, just observe that $\#C'_{1}+\#C_{3}=\#Q_{0}-2$ and $K$ is linearly independent.
\end{proof}
The proof of the following theorem is now immediate.
\begin{theorem}\label{baseH1}
The classes of the elements of $\widehat{U}-K$ in $Ker D_{1}/Im D_{0}$ form a basis of $\HH^{1}(A)$.
\end{theorem}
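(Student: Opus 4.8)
The plan is to reduce the statement to the elementary linear-algebra principle that if $B$ is a basis of a vector space $V$ and a subset $B_{0}\subseteq B$ is a basis of a subspace $W\subseteq V$, then the images of the elements of $B-B_{0}$ form a basis of the quotient $V/W$. Since by construction $\HH^{1}(A)=Ker D_{1}/Im D_{0}$, I would apply this principle with $V=Ker D_{1}$, $W=Im D_{0}$, $B=\widehat{U}$ and $B_{0}=K$. Once the hypotheses of the principle are in place, the conclusion is immediate.

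To invoke the principle I must verify three things. First, that $\widehat{U}$ is a basis of $Ker D_{1}$: this follows from the lemma asserting that $U$ is a basis of $Ker D_{1}$, together with the remark --already recorded before the statement-- that $U$ and $\widehat{U}$ generate the same $\mathds{k}$-subspace. Indeed, the replacement procedure is carried out by one-to-one substitutions, so $\#\widehat{U}=\#U$; being a spanning set of $Ker D_{1}$ of the correct cardinality, $\widehat{U}$ is a basis. Second, that $K$ is a basis of $Im D_{0}$: this is precisely the content of the lemma immediately preceding the theorem. Third, and this is the only point demanding genuine verification, that $K\subseteq\widehat{U}$.

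For the inclusion $K=\{s\}\cup C'_{1}\cup C_{3}\subseteq\widehat{U}$ I would trace the construction of $\widehat{U}$ term by term. The set $C_{3}$ is touched by neither step of the procedure, so $C_{3}\subseteq\widehat{U}$. Step (1) replaces each element of $C_{1}$ with $j\neq 0$ exactly by the corresponding element of $C'_{1}$, so $C'_{1}\subseteq\widehat{U}$; note that in the case $a=0$ the element of step (2) that is replaced lies in $C_{1}''\cup C_{4}$ (it contains $\alpha^{B}_{0}\|\alpha^{B}_{0}$ as a summand), hence does not disturb $C'_{1}$ or $C_{3}$. Finally, step (2) introduces $s=\sum_{\alpha^{(i)}}\alpha^{i}_{0}\|\alpha^{i}_{0}$ into $\widehat{U}$ in both cases $a>0$ and $a=0$, so $s\in\widehat{U}$. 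This establishes $K\subseteq\widehat{U}$.

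With the three ingredients in hand, the quotient-basis principle yields at once that the classes of the elements of $\widehat{U}-K$ form a basis of $Ker D_{1}/Im D_{0}=\HH^{1}(A)$. I expect the \emph{only} real bookkeeping --and hence the main, though mild, obstacle-- to be the term-by-term check that $K\subseteq\widehat{U}$, keeping track of how the two cases $a>0$ and $a=0$ of step (2) interact with the sets $C'_{1}$, $C_{3}$ and $C_{4}$; everything else is either already proved in the preceding lemmas or a direct application of the quotient-basis principle.
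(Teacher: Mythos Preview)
Your proposal is correct and follows exactly the approach the paper intends: the paper simply states that the proof ``is now immediate'' after establishing that $U$ (hence $\widehat{U}$) is a basis of $Ker D_{1}$ and that $K$ is a basis of $Im D_{0}$. You have just made explicit the quotient-basis principle and the inclusion $K\subseteq\widehat{U}$ that the paper leaves implicit.
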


\subsection{Computation of $\HH^{i}(A)$ for higher degrees}
In order to obtain a basis of $\HH^{2}(A)$ we will follow the same lines as for $\HH^{1}(A)$. Recall that $D_{2}=0$ and 
so $Ker D_{2}=\Hom_{E^{e}}( \mathds{k}\mathcal{R},A)$.
\begin{remark}\label{remarkorder}
To compute a basis of $Im D_{1}$ we will need to define an order $\prec'$ in $\mathcal{R}_{nomon}$. Every element in 
$\mathcal{R}_{nomon}$ is of the form 
\[\rho_{i}=\alpha^{(k_{i})}+\sum_{j>k_{i}}b_{ij}\alpha^{(j)}\]
so we say that $\rho_{i}\prec' \rho_{j}$ if $k_{i}< k_{j}$. This order induces an order on the elements of 
$\mathcal{R}_{nomon}\|{}_{0}\mathcal{B}_{\omega}$ as follows,
 \[\rho_{i}\|\alpha^{(k)}\prec \rho_{j}\|\alpha^{(l)} \text{ if } \rho_{i}\prec' \rho_{j} \text{ or } \rho_{i}=\rho_{j} \text{ and } k<l.\]

For each connected component of $Q_{\rho}$ associated to a non monomial relation there exists a relation containing 
$\alpha^{(k_{i})}$ with $k_{i}$ maximum. We will call this relation the last one of the component.
\end{remark}

Denote by $X$ the set consisting of all the elements in ${}_{0}\mathcal{B}_{\omega}$ that are involved in non monomial relations and 
define $d=\#X$.\\

Consider the following subsets of $ \mathds{k}(\mathcal{R}\|_{0}\mathcal{B}_{\omega})$:

\begin{itemize}

\item $B_{1}=\{\rho_{i}\|f_{\rho_{i}}:\rho_{i}\in \mathcal{R}_{nomon} \hbox{ such that } \rho_{i}\hbox{ is not the last one of }Q^{k}_{\rho}, \hbox{ with }k=1,\dots ,r\}$,

\item $\displaystyle B_{2}=\{\sum_{i: \alpha^{(h)}\in \rho_{i}, h\neq k_{i}} \rho_{i}\| b_{ik}\alpha^{(h)}\ :\ \alpha^{(h)}\in X\}$.\\

\end{itemize}

Observe that $\# B_{1}=n-d+l-r$ and $\#B_{2}=d$.

\begin{lemma}
The set $B_{1}\cup B_{2}$ is a basis of $Im D_{1} $.
\end{lemma}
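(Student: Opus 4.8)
The plan is to show that the set $B_{1}\cup B_{2}$ spans $\im D_{1}$ and is linearly independent, and then to conclude using a dimension count. First I would compute $\dim_{\mathds{k}}\im D_{1}$. Since $D_{2}=0$ we have that $\HH^{2}(A)=\ker D_{2}/\im D_{1}=\Hom_{E^{e}}(\mathds{k}\mathcal{R},A)/\im D_{1}$, so the dimension of $\im D_{1}$ is determined by $\dim_{\mathds{k}}\Hom_{E^{e}}(\mathds{k}\mathcal{R},A)$ and the known value of $\dim_{\mathds{k}}\HH^{2}(A)$ from \cite{GL}; alternatively, one computes $\dim_{\mathds{k}}\im D_{1}=\dim_{\mathds{k}}\Hom_{E^{e}}(\mathds{k}Q_{1},A)-\dim_{\mathds{k}}\ker D_{1}$, and $\dim_{\mathds{k}}\ker D_{1}$ is already available from the earlier lemma computing a basis $U$ of $\ker D_{1}$. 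Either route should produce the value $n-d+l-r+d=n+l-r$, matching $\#B_{1}+\#B_{2}$, so the cardinality count is consistent once independence is established.

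Next I would verify that $B_{1}\cup B_{2}\subseteq \im D_{1}$ and that these elements actually arise as images under $D_{1}$ of basis elements of $\Hom_{E^{e}}(\mathds{k}Q_{1},A)$. Recall that for an arrow $\alpha_{r}$ lying in a branch $\alpha^{(j)}$ that is part of a relation $\rho_{i}$, the formula gives $D_{1}(\alpha_{r}\|\alpha_{r})=\sum_{i:\alpha^{(j)}\in \rho_{i}}\rho_{i}\|b_{ij}\alpha^{(j)}$. The elements of $B_{2}$ are precisely these images, grouped according to which branch element $\alpha^{(h)}\in X$ is being hit, so each element of $B_{2}$ is manifestly in $\im D_{1}$. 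For $B_{1}$, the relations $\rho_{i}\|f_{\rho_{i}}$ that are not the last one of a connected component $Q_{\rho}^{k}$ must be exhibited as images; here I would use the freedom to take suitable $\mathds{k}$-linear combinations of the images $D_{1}(\alpha_{r}\|\alpha_{r})$ over arrows within a fixed connected component, recovering the $\rho_{i}\|f_{\rho_{i}}$ terms while cancelling the unwanted pieces. This is where the structure of $Q_{\rho}$ enters: the connectedness of each component guarantees enough relations to produce all but one of the $\rho_{i}\|f_{\rho_{i}}$ in a component as combinations, and excluding the last relation avoids the linear dependence forced by the row-echelon normalization of the coefficient matrix.

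The main obstacle I anticipate is the verification of linear independence of $B_{1}\cup B_{2}$, and more precisely the interplay between the two families. The elements of $B_{2}$ have leading terms of the form $\rho_{i}\|\alpha^{(h)}$ with $h\neq k_{i}$, whereas the elements $\rho_{i}\|f_{\rho_{i}}$ of $B_{1}$ are supported on the branches appearing in $f_{\rho_{i}}$, that is also on indices $>k_{i}$; I would separate these by introducing the order $\prec$ on $\mathcal{R}_{nomon}\|{}_{0}\mathcal{B}_{\omega}$ from Remark \ref{remarkorder} and arguing that with respect to this order the leading terms of the chosen generators are distinct. Concretely, I would show that in any nontrivial relation among the elements of $B_{1}\cup B_{2}$, comparing leading terms under $\prec$ forces all coefficients to vanish, exploiting that the matrix $C=(a_{ij})$ is in reduced row echelon form, so that the pivot positions $\alpha^{(k_{i})}$ occur with $k_{i}$ strictly increasing along each component and cannot be reproduced by the off-pivot terms of other relations. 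Once independence is secured and $\#(B_{1}\cup B_{2})=n+l-r$ matches $\dim_{\mathds{k}}\im D_{1}$, the spanning follows automatically and the lemma is proved.
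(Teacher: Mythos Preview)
Your overall strategy (dimension count plus containment plus a basis criterion) is sound, but it diverges from the paper in two places, and one of them hides a simplification you are missing.

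First, for $B_{1}\subset\im D_{1}$ you propose to obtain each $\rho_{i}\|f_{\rho_{i}}$ as a linear combination of images over the arrows of a connected component of $Q_{\rho}$. This is unnecessary. Since $\rho_{i}=W_{\rho_{i}}-f_{\rho_{i}}\in I$, in $A$ one has $W_{\rho_{i}}=f_{\rho_{i}}$, and because the coefficient matrix is in reduced row echelon form the pivot branch $\alpha^{(k_{i})}=W_{\rho_{i}}$ occurs only in $\rho_{i}$. Hence $D_{1}(\alpha_{0}^{(k_{i})}\|\alpha_{0}^{(k_{i})})=\rho_{i}\|\alpha^{(k_{i})}=\rho_{i}\|f_{\rho_{i}}$ directly, with no combinations needed. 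The paper uses exactly this observation.

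Second, you opt to prove linear independence of $B_{1}\cup B_{2}$ via leading terms under $\prec$, whereas the paper instead (i) shows $B_{1}\cap B_{2}=\emptyset$ by a short argument about what an element in the intersection would force on $Q_{\rho}$, and then (ii) proves that $B_{1}\cup B_{2}$ \emph{spans} $\im D_{1}$, using that $D_{1}(\alpha_{j}^{(i)}\|\alpha_{j}^{(i)})=D_{1}(\alpha_{0}^{(i)}\|\alpha_{0}^{(i)})$ for every arrow in a branch and that $D_{1}(\sum_{\alpha^{(i)}\in Q_{\rho}^{k}}\alpha_{0}^{(i)}\|\alpha_{0}^{(i)})=0$. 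A spanning set of the right cardinality is a basis, so independence never needs to be checked. Your leading-term argument is plausible but not clearly correct as stated: the claim that leading terms of $B_{1}$-elements and $B_{2}$-elements are pairwise distinct requires justification, since a term $\rho_{i}\|\alpha^{(h)}$ with $h>k_{i}$ and $b_{ih}\neq 0$ can appear in both a $B_{1}$-element (for $\rho_{i}$) and a $B_{2}$-element (for $\alpha^{(h)}$), and you have not argued which one carries it as the leading term. This can be made to work, but the paper's spanning argument avoids the issue entirely and is shorter.
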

\begin{proof}
It is clear that $B_{1}\cup B_{2}\subset Im D_{1}$ since
$B_{1}\subset\{D_{1}(\alpha_{0}^{(i)}\|\alpha_{0}^{(i)}):
\alpha^{(i)}=W_{\rho_{j}} \hbox{ for some } j\}$ and
$B_{2}=\{D_{1}(\alpha_{0}^{(i)}\|\alpha_{0}^{(i)}):\alpha^{(i)}\in X\}$.
We know that $dim_{ \mathds{k}}Ker D_{1}=r +m + Da + \# Q_{0}-2$ and, on the other hand,
$dim_{ \mathds{k}}\Hom_{E^{e}}( \mathds{k}Q_{1},A)=\# Q_{1}+(D-1)a=m+l+n+Da+\# Q_{0}-2$.
From the previous computations we deduce that $dim_{ \mathds{k}} Im D_{1}=l+n-r$. 

Observe that the union $B_{1}\cup B_{2}$ is disjoint: an element in the intersection of $B_{1}$ and $B_{2}$ must be of the form 
$t=\rho_{i}\|c\alpha^{(k)}$ with $ \alpha^{(k)}\neq \alpha^{(k_{i})} $ the only branch except for $\alpha^{(k_{i})}$ in  $\rho_{i}$ 
and $c\in \mathds{k}$. Now, since $t$ has only one summand, the branch  $\alpha^{(k)}$ is not
involved in any other relation. On the other hand  $\alpha^{(k_{i})} $, by definition, is only involved in the relation $\rho_i$. 
Thus, the branches $ \alpha^{(k)}$ and $\alpha^{(k_{i})}= W_{\rho_{i}} $ are the only ones in their connected component of $Q_{\rho}$.
So $\rho_{i} = \alpha^{(k_{i})} - c\alpha^{(k)}$ is the unique relation  of $Q^{k}_{\rho}$, for some $k$, and this cannot happen 
due to the definition of $B_1$. 

Since $\# B_{1}=n-d+l-r$, $\#B_{2}=d$ and the union is disjoint, it follows that $\#(B_{1}\cup
B_{2})=\#B_{1}+\#B_{2}=l+n-r$, so it is sufficient to prove that $B_{1}\cup B_{2}$ generates
$Im(D_{1})$. By definition,
$D_{1}(\alpha_{0}^{(i)}\|\alpha_{0}^{(i)})$ coincides with $D_{1}(\alpha_{j}^{(i)}\|\alpha_{j}^{(i)})$
for any arrow $\alpha_{j}^{(i)}$ in the branch $\alpha^{(i)}$, also
$D_{1}(\sum_{i: \alpha^{(i)}\in Q_{\rho}^{k}}\alpha_{0}^{(i)}\|
\alpha_{0}^{(i)})= 0$, for every connected component $Q_{\rho}^{k}$ of $Q_{\rho}$, so the result is proven.
\end{proof}

Again, we will modify the basis $B_{1}\cup B_{2}$, following the next steps.
\begin{enumerate}
\item For every element $w$ in $B_{2}$ with a summand of the form $\rho_{j}\| b_{jk} \alpha^{(k)}$ where $\alpha^{(k)}$ is the first 
branch of $f_{\rho_{j}}$ and $\rho_{j}$ is not the last relation for any connected component of $Q_{\rho}$, subtract from $w$ the 
element $\rho_{j}\|f_{\rho_{j}}$ which belongs to $B_{1}$. Let us call $B'_{2}$ the set obtained from $B_{2}$ in this way. 
The vector spaces spanned by $B_{1}\cup B_{2}$ and by $B_{1}\cup B'_{2}$ are equal. 
\item Consider the order defined in \ref{remarkorder} for $\mathcal{R}_{nomon}\|_{0}\mathcal{B}_{\omega}$ and the 
matrix associated to the equations corresponding to the rows of $B'_{2}$ in its echelon form. 
Reconstruct the set in $\mathcal{R}_{nomon}\|_{0}\mathcal{B}_{\omega}$ from the echelon form and call it $B''_{2}$.
\end{enumerate}
Since $\langle B_{1}\cup B''_{2}\rangle=\langle B_{1}\cup B'_{2}\rangle =\langle B_{1}\cup B_{2}\rangle$, we conclude 
that $B_{1}\cup B''_{2}$ is a basis of $Im D_{1}$.\bigbreak

Now we will modify the set $$N=\{\rho_{i}\| \alpha^{(k)}: \rho_{i}\in \mathcal{R}_{nomon} \text{ and } \alpha^{(k)}\in {}_{0}\mathcal{B}_{\omega}\}\cup\{\sigma_{i}\| \alpha^{(k)}:\sigma_{i}\in {}_{0}(\mathcal{R}_{mon})_{\omega} \text{ and } \alpha^{(k)}\in {}_{0}\mathcal{B}_{\omega}\},$$ which is clearly a basis of $\Hom_{E^{e}}( \mathds{k}\mathcal{R},A)$, in order to obtain another basis $N'$ of $\Hom_{E^{e}}( \mathds{k}\mathcal{R},A)$ including $B_{1}\cup B''_{2}$.
The procedure is as follows.
\begin{enumerate}
\item To have the elements of $B_{1}$ in our basis, given $\rho_{i}\in \mathcal{R}_{nomon}$ and $\alpha^{(k)}\in f_{\rho_{i}}$ with 
$k$ the minimum of $f_{\rho_{i}}$, replace $\rho_{i}\| \alpha^{(k)}$ by $\rho_{i}\|f_{\rho_{i}}$.
\item To get the elements of $B''_{2}$ in our new basis consider the pivot of every element in $B''_{2}$ and replace the element 
having this pivot in $N$ by the corresponding one in $B''_{2}$.
\end{enumerate}

\begin{theorem}\label{teoremaH2}
The classes of $N'-B_{1}\cup B''_{2}$ in $\Hom_{E^{e}}( \mathds{k}\mathcal{R}, A)/Im(D_{1})$ form a basis of $\HH^{2}(A)$.
\end{theorem}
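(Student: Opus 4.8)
The plan is to reduce the statement to the elementary linear-algebra fact that, if $V$ is a finite-dimensional $\mathds{k}$-vector space with basis $N'$ and $W\subseteq V$ is a subspace admitting a basis $B\subseteq N'$, then the cosets of the elements of $N'\setminus B$ form a basis of $V/W$. I would apply this with $V=\Hom_{E^{e}}(\mathds{k}\mathcal{R},A)$, $W=\im D_{1}$ and $B=B_{1}\cup B''_{2}$, so that the whole proof amounts to checking the three hypotheses of this fact in our setting.

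First, since $D_{2}=0$ we have $Ker\,D_{2}=\Hom_{E^{e}}(\mathds{k}\mathcal{R},A)$, and hence
\[\HH^{2}(A)=Ker\,D_{2}/\im D_{1}=\Hom_{E^{e}}(\mathds{k}\mathcal{R},A)/\im D_{1},\]
so the quotient in the statement is genuinely $\HH^{2}(A)$. Second, it was established just above the theorem, via the equalities $\langle B_{1}\cup B''_{2}\rangle=\langle B_{1}\cup B'_{2}\rangle=\langle B_{1}\cup B_{2}\rangle$ together with the preceding lemma, that $B_{1}\cup B''_{2}$ is a basis of $\im D_{1}$. It therefore only remains to verify that $N'$ is a basis of $\Hom_{E^{e}}(\mathds{k}\mathcal{R},A)$ containing $B_{1}\cup B''_{2}$.

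The inclusion $B_{1}\cup B''_{2}\subseteq N'$ holds by construction, so the only genuine content is that $N'$ is again a basis; as $N$ is already a basis, it suffices to see that the two replacement steps producing $N'$ are invertible changes of basis. In step~(1), the element $\rho_{i}\|f_{\rho_{i}}$ replacing $\rho_{i}\|\alpha^{(k)}$ contains, by the choice of $k$ as the least index occurring in $f_{\rho_{i}}$, the summand $\rho_{i}\|\alpha^{(k)}$ with the nonzero coefficient $-b_{ik}$, while its other summands $\rho_{i}\|\alpha^{(j)}$ have $j>k$ and are left untouched; hence this step is triangular with nonzero diagonal and thus invertible. In step~(2), the set $B''_{2}$ is in echelon form for the order $\prec$ of Remark~\ref{remarkorder}, so its members have pairwise distinct pivots, each equal to some basis vector of $N$, and replacing each such pivot by the corresponding member of $B''_{2}$ is again a triangular, hence invertible, change. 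The main—though entirely routine—point is to make sure the two steps do not collide, i.e. that no pivot position used in step~(2) coincides with a position $\rho_{i}\|\alpha^{(k)}$ altered in step~(1); this follows from the disjointness of $B_{1}$ and $B_{2}$ already proved in the preceding lemma, tracked at the level of individual pivots. Once $N'$ is known to be a basis, the elementary fact quoted in the first paragraph applies verbatim and yields that the classes of $N'\setminus(B_{1}\cup B''_{2})$ form a basis of $\HH^{2}(A)$.
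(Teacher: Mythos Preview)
Your proposal is correct and takes essentially the same approach as the paper, which in fact gives no explicit proof at all: the theorem is simply stated after the construction of $N'$, the point being that the two replacement procedures were designed precisely so that $N'$ remains a basis of $\Hom_{E^{e}}(\mathds{k}\mathcal{R},A)$ containing the basis $B_{1}\cup B''_{2}$ of $\im D_{1}$. You have merely made explicit the elementary linear-algebra verification that the paper leaves to the reader.
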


The last theorem of this section gives a basis of the Hochschild cohomology of $A$ for higher degrees.

\begin{theorem}\label{teoremaHi}
Given $i\geq 3$, the set $B_{i}=\{w_{j}\| \alpha^{(k)}:\alpha^{(k)}\in {}_{0}\mathcal{B}_{\omega}~~and~~w_{j}\in {}_{0}(\mathcal{A}_{i-1})_{\omega}\}$ is a basis of $\HH^{i}(A)$.
\end{theorem}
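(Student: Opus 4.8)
The plan is to read off $\HH^{i}(A)$ directly from the complex obtained by applying $\Hom_{A^{e}}(-,A)$ to $C_{min}(A)$, exploiting that for $i\geq 3$ the relevant term is flanked by two vanishing differentials. First I would fix the bookkeeping of
\[0\longrightarrow \Hom_{E^{e}}(E,A)\xrightarrow{D_{0}}\Hom_{E^{e}}(\mathds{k}Q_{1},A)\xrightarrow{D_{1}}\Hom_{E^{e}}(\mathds{k}\mathcal{R},A)\xrightarrow{D_{2}}\Hom_{E^{e}}(\mathds{k}\mathcal{A}_{2},A)\longrightarrow\cdots,\]
observing that beyond the first three terms the space sitting in cohomological degree $i$ is $\Hom_{E^{e}}(\mathds{k}\mathcal{A}_{i-1},A)$, that the map leaving it is $D_{i}$, and that the map entering it is $D_{i-1}$. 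This index shift between the ambiguity sets $\mathcal{A}_{n}$ and the cohomological degrees (so that degree $i$ corresponds to $\mathcal{A}_{i-1}$) is the only point where a little care is needed.

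For $i\geq 3$ both indices satisfy $i-1\geq 2$ and $i\geq 2$, so the already established vanishing $D_{j}=0$ for all $j\geq 2$ yields $D_{i-1}=0$ and $D_{i}=0$. Consequently $Ker D_{i}$ is the whole space $\Hom_{E^{e}}(\mathds{k}\mathcal{A}_{i-1},A)$ while $Im D_{i-1}=0$, whence
\[\HH^{i}(A)=Ker D_{i}/Im D_{i-1}=\Hom_{E^{e}}(\mathds{k}\mathcal{A}_{i-1},A).\]
Invoking the explicit description of this Hom-space for $i-1\geq 2$, namely $\Hom_{E^{e}}(\mathds{k}\mathcal{A}_{i-1},A)=\mathds{k}({}_{0}(\mathcal{A}_{i-1})_{\omega}\|{}_{0}\mathcal{B}_{\omega})$, its defining $\mathds{k}$-basis is precisely the set of symbols $w_{j}\|\alpha^{(k)}$ with $w_{j}\in{}_{0}(\mathcal{A}_{i-1})_{\omega}$ and $\alpha^{(k)}\in{}_{0}\mathcal{B}_{\omega}$, that is, the set $B_{i}$. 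Since $Im D_{i-1}=0$, passing to classes in the quotient changes nothing, so the elements of $B_{i}$ form a basis of $\HH^{i}(A)$ (note that $B_{i}=\emptyset$, hence $\HH^{i}(A)=0$, exactly when there are no $(i-1)$-ambiguities running from $0$ to $\omega$).

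I do not expect any genuine obstacle here: the statement is essentially immediate once the complex is in place, the source/target constraints having already been absorbed into the stated description of $\Hom_{E^{e}}(\mathds{k}\mathcal{A}_{i-1},A)$, which restricts the contributing ambiguities to those from $0$ to $\omega$. In contrast to the computations of $\HH^{1}(A)$ and $\HH^{2}(A)$, where nonzero differentials force one to modify bases and to track images carefully, here both flanking maps are zero and there is nothing to quotient by, which is exactly why the higher cohomology reduces to simply listing $B_{i}$.
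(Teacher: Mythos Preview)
Your proposal is correct and follows essentially the same argument as the paper: since $D_{j}=0$ for all $j\geq 2$, both differentials flanking the $i$-th term vanish for $i\geq 3$, so $\HH^{i}(A)=\Hom_{E^{e}}(\mathds{k}\mathcal{A}_{i-1},A)=\mathds{k}({}_{0}(\mathcal{A}_{i-1})_{\omega}\|{}_{0}\mathcal{B}_{\omega})$ with the stated basis $B_{i}$. Your version is in fact slightly more explicit than the paper's in spelling out that both $D_{i}$ and $D_{i-1}$ vanish, which is exactly what is needed.
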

\begin{proof}
Since $D_{i}=0$ for $i\geq 3$ it follows that $\HH^{i}(A)=\Hom_{E^{e}}( \mathds{k}\mathcal{A}_{i-1},A)$. The set $B_{i}$ is a basis of 
this $\mathds{k}$-vector space so it is also a basis of $\HH^{i}(A)$.
\end{proof}


\section{Comparison morphisms}\label{comp}
For the computation of Hochschild cohomology we used a minimal resolution. 
One possible way of computing the Gerstenhaber structure is via comparison morphisms between this minimal resolution and 
the $E$-reduced Bar resolution in whose existence is guaranteed since both complexes are projective resolutions of 
$A$ as $A^{e}$-module.
We will describe them explicitly in the sequel.
Let us call $\varphi:C_{min}(A)\to C_{BarE}(A)$ and $\eta:C_{BarE}(A)\to C_{min}(A)$ a pair of comparison morphisms between both resolutions:\\

{\small$
  \begin{array}{cccccccccccc}
  \dots A\otimes_{E} \mathds{k}\mathcal{A}_{2}\otimes_{E}A&\stackrel{d_{2}}{\longrightarrow}&  A\otimes_{E}\mathds{k}\mathcal{R}\otimes_{E}A &  \stackrel{d_{1}}{\longrightarrow} & A\otimes_{E}\mathds{k}Q_{1}\otimes_{E}A & \stackrel{d_{0}}{\longrightarrow} &A\otimes_{E}A   & \rightarrow  0& & \\
      _{\varphi_{3}}\downarrow\uparrow_{\eta_{3}}& & _{\varphi_{2}}\downarrow \uparrow_{\eta_{2}} & &  _{\varphi_{1}}\downarrow \uparrow_{\eta_{1}} & & _{\varphi_{0}}\downarrow \uparrow_{\eta_{0}}    &   &  &\\

 \dots A\otimes_{E}\overline{A}^{\otimes 3}\otimes_{E}A & \stackrel{\delta_{2}}{\longrightarrow} &A\otimes_{E}\overline{A}^{\otimes 2}\otimes_{E} A & \stackrel{\delta_{1}}{\longrightarrow} & A\otimes_{E}\overline{A}\otimes_{E}A& \stackrel{\delta_{0}}{\longrightarrow} &A\otimes_{E} A  &
   \rightarrow 0.  &  &  \\
  \end{array}$}
\vspace{1em}

After applying the functor $\Hom_{A^{e}}(-,A)$ and considering the natural identifications at the beginning of Section 3  we obtain 
the maps $\varphi^{\ast}$ and $\eta^{\ast}$ and the following diagram:
\\

{\footnotesize$
  \begin{array}{cccccccccccc}
    0 \longrightarrow &\Hom_{E^{e}} (E,A) & \stackrel{D_{0}}{\longrightarrow} & \Hom_{E^{e}}( \mathds{k}Q_{1},A) & \stackrel{D_{1}}{\longrightarrow} & \Hom_{E^{e}}( \mathds{k}\mathcal{R}, A )& \stackrel{D_{2}}{\longrightarrow} & \Hom_{E^{e}}( \mathds{k}\mathcal{A}_{2},A)\dots &&  \\
      & _{\eta_{0}^{\ast}}\downarrow\uparrow_{\varphi_{0}^{\ast}} &  & _{\eta_{1}^{\ast}}\downarrow \uparrow_{\varphi_{1}^{\ast}} &  & _{\eta_{2}^{\ast}}\downarrow \uparrow_{\varphi_{2}^{\ast}} &  & _{\eta_{3}^{\ast}}\downarrow\uparrow_{\varphi_{3}^{\ast}} & & &\\
    0 \longrightarrow&\Hom_{E^{e}} (E,A) &\stackrel{\delta_{0}^{\ast}}{\longrightarrow} & \Hom_{E^{e}}(\overline{A},A) & \stackrel{\delta_{1}^{\ast}}{\longrightarrow} & \Hom_{E^{e}}(\overline{A}\otimes_{E}\overline{A},A) & \stackrel{\delta_{2}^{\ast}}{\longrightarrow} &
     \Hom_{E^{e}}(\overline{A}^{\otimes_{E} 3},A)\dots&& &  \\
  \end{array}
$}
\vspace{1em}

The maps $\varphi^{*}$ and $\eta^{*}$ induce isomorphisms at the cohomology level that we will still denote $\varphi^{*}$ and $\eta^{*}$.
We start the description of the morphisms of complexes $\varphi:C_{min}(A)\to C_{BarE}(A)$ and $\eta:C_{BarE}(A)\to C_{min}(A)$.
    Firstly, $\varphi_{0}$ and $\eta_{0}$ can both be chosen as the identity of $A\otimes_{E}A$. We will construct $\varphi_{i}$ for $i>0$ using the homotopy $t_{\ast}$ defined in Lemma \ref{Ered}.

Given $i\in \mathbb{N}$ and $1\otimes_{E}u\otimes_{E}1\in P_{i-1}$, we define:
$$\varphi_{i}(1\otimes_{E}u\otimes_{E}1)=t_{i-1}\varphi_{i-1}d_{i-1}(1\otimes_{E}u\otimes_{E}1).$$

and the we extend $\varphi_{i}$ as an $A$-bimodule morphism. Given $\lambda, \mu\in\mathcal{B}$:
$$\varphi_{i}(\lambda\otimes_{E}u\otimes_{E}\mu)=\lambda t_{i-1}\varphi_{i-1}d_{i-1}(1\otimes_{E}u\otimes_{E}1)\mu.$$
Finally we extend it linearly.

\begin{proposition}\label{phiesdecomplejos}
The family of functions $\varphi=\{\varphi_{i}\}_{i\in \mathbb{N}\cup \{0\}}$ is a morphism of complexes.
\end{proposition}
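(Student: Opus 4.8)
The plan is to verify that $\varphi$ commutes with the differentials, i.e.\ that $\delta_{i-1}\circ \varphi_i = \varphi_{i-1}\circ d_{i-1}$ for all $i\ge 1$, where $\varphi_0=\eta_0=\id$. Since all maps are $A$-bimodule morphisms by construction, it suffices to check the identity on generators of the form $1\otimes_E u\otimes_E 1$ with $u\in\mathcal{A}_{i-1}$ (or $u\in Q_1$, $u\in\mathcal{R}$ in low degrees). I would argue by induction on $i$. The base case $i=1$ must be checked by hand: one computes $\delta_0\varphi_1(1\otimes_E v\otimes_E 1)$ for $v\in Q_1$ using $\varphi_1=t_0\varphi_0 d_0$ and the explicit formulas $d_0(1\otimes v\otimes 1)=v\otimes 1-1\otimes v$, $\varphi_0=\id$, and $t_0(a_0\otimes_E a_1)=1\otimes_E\overline{a_0}\otimes_E a_1$, and compares it with $\varphi_0 d_0(1\otimes_E v\otimes_E 1)$.

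For the inductive step I would assume $\delta_{i-2}\varphi_{i-1}=\varphi_{i-2}d_{i-2}$ and compute, using the defining formula $\varphi_i = t_{i-1}\varphi_{i-1}d_{i-1}$ together with the homotopy relation for the $E$-reduced Bar resolution, namely $\delta_{i}\circ t_{i}+t_{i-1}\circ\delta_{i-1}=\id$ from Lemma \ref{Ered}. Concretely, applying $\delta_{i-1}$ to $\varphi_i$ and invoking the homotopy identity in degree $i-1$ gives
\[
\delta_{i-1}\varphi_i = \delta_{i-1}t_{i-1}\varphi_{i-1}d_{i-1} = (\id - t_{i-2}\delta_{i-2})\varphi_{i-1}d_{i-1}.
\]
Now I substitute $\delta_{i-2}\varphi_{i-1}=\varphi_{i-2}d_{i-2}$ from the inductive hypothesis, so that the second term becomes $t_{i-2}\varphi_{i-2}d_{i-2}d_{i-1}$, which vanishes because $d_{i-2}d_{i-1}=0$ by Proposition \ref{complex}. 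The first term is exactly $\varphi_{i-1}d_{i-1}$, which is what we want, \emph{provided} we know that $\varphi_{i-1}d_{i-1}$ actually lands in the image that $t_{i-2}\delta_{i-2}$ annihilates correctly; the homotopy identity is applied to $\varphi_{i-1}d_{i-1}(1\otimes_E u\otimes_E 1)$, an element of the Bar complex, so this is legitimate.

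The main obstacle I anticipate is the base case and the bookkeeping of where each element lives: the homotopy relation $\delta_{i}t_{i}+t_{i-1}\delta_{i}=\id$ holds on the whole Bar resolution, but one must be careful that the formula $\varphi_i=t_{i-1}\varphi_{i-1}d_{i-1}$ is being applied to the correct degree and that the reduction $a\mapsto\overline{a}$ inside $t_*$ does not introduce terms lying in $E$ that would spoil the computation. In particular the verification that $\delta_0\varphi_1=\varphi_0 d_0$ in degree $1$ is where the interplay between $d_0$, the identity $\varphi_0$, and the single bar $t_0$ must be reconciled explicitly, since there is no inductive hypothesis to lean on. Once the base case is settled, the inductive step is a formal manipulation driven entirely by the contracting homotopy of $C_{BarE}(A)$ and the relation $d\circ d=0$ in $C_{min}(A)$, and extends to arbitrary elements $\lambda\otimes_E u\otimes_E\mu$ by $A$-bimodule linearity since every map in sight is a bimodule morphism.
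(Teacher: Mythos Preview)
Your proposal is correct and follows essentially the same argument as the paper: induction on $i$, using the defining formula $\varphi_i = t_{i-1}\varphi_{i-1}d_{i-1}$ together with the homotopy identity $\delta_{i-1}t_{i-1} = \id - t_{i-2}\delta_{i-2}$ and then the inductive hypothesis plus $d_{i-2}d_{i-1}=0$. The only difference is cosmetic: the paper dismisses the base case as ``clear'' (and indeed it follows from the same homotopy manipulation, since $\delta_{-1}d_0=0$), whereas you propose to check it by hand.
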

\begin{proof}
To prove that $\varphi$ is a morphism of complexes we need to verify that the corresponding diagrams commute, that means 
that $\delta_{i}\varphi_{i+1}=\varphi_{i}d_{i}$ for all $i$. We will prove it by induction. 
The case $i=0$ is clear. For the inductive step, let us suppose that  $\delta_{j-1}\varphi_{j}=\varphi_{j-1}d_{j-1}$ for 
any $j> 0$ and prove that $\delta_{j}\varphi_{j+1}=\varphi_{j}d_{j}$. Firstly, observe that it is sufficient to 
prove the result for $1\otimes_{E}u\otimes_{E}1\in P_{j}$ since $\varphi_{i}, d_{i}$ and $\delta_{i}$ are morphisms of 
$A$-bimodules for all $i$.

By definition of $\varphi_{j+1}$,
\begin{align*}
\delta_{j}\varphi_{j+1}(1\otimes_{E}u\otimes_{E}1)&=\delta_{j}t_{j}\varphi_{j}d_{j}(1\otimes_{E}u\otimes_{E}1)\\&=
(Id-t_{j-1}\delta_{j-1})\varphi_{j}d_{j}(1\otimes_{E}u\otimes_{E}1).
\end{align*}


The inductive hypothesis says that $\delta_{j-1}\varphi_{j}=\varphi_{j-1}d_{j-1}$, as a consequence
 \[t_{j-1}\delta_{j-1}\varphi_{j}d_{j}(1\otimes_{E}u\otimes_{E}1)=t_{j-1}\varphi_{j-1}d_{j-1}d_{j}(1\otimes_{E}u\otimes_{E}1)\]
 which is zero since $d_{j-1}d_{j}=0$, and we obtain the desired equality.
\end{proof}

\begin{remark}
In lower degrees, using the inductive definition, we obtain the explicit formulas:
\[\varphi_{1}(\lambda\otimes_{E}\alpha\otimes_{E}\mu)=\lambda\otimes_{E}\alpha\otimes_{E}\mu.\]
\[\varphi_{2}(\lambda\otimes_{E}r\otimes_{E}\mu)=\sum \lambda\otimes_{E}\overline{r^{(1)}}\otimes_{E}\overleftarrow{r^{(2)}}\otimes_{E}r^{(3)}\mu.\]
\[\varphi_{3}(\lambda\otimes_{E} u\otimes_{E}\mu)=\sum \lambda\otimes_{E}\overline{w_{2}}\otimes_{E}(\overline{w_{1}w_{0})^{(1)}}\otimes_{E}\overleftarrow{(w_{1}w_{0})^{(2)}}\otimes_{E}(w_{1}w_{0})^{(3)}\mu\]
where $u=w_{2}w_{1}w_{0}$ as a right $2$-ambiguity.
\end{remark}

For the definition of $\{\eta_{i}\}_{i\in \mathbb{N}}$ we will use a similar procedure, this time with
the homotopy $\{s_{i}\}_{i\geq -1}$ constructed in Proposition \ref{complex} for our minimal resolution.
Namely, given $i\in \mathbb{N}$ and 
$1\otimes_{E}\overline{a_{1}}\otimes_{E}\dots\otimes_{E}\overline{a_{i}}\otimes_{E} 1\in A\otimes_{E}\overline{A}^{\otimes_{E} i}\otimes_{E}A$:
\[\eta_{i}(1\otimes_{E}\overline{a_{1}}\otimes_{E}\dots \otimes_{E}\overline{a_{i}}\otimes_{E} 1)=s_{i-1}\eta_{i-1}\delta_{i-1}(1\otimes_{E}\overline{a_{1}}\otimes_{E}\dots \otimes_{E}\overline{a_{i}}\otimes_{E} 1),\]
and we extend $\eta_{i}$ as a morphism of $A$-bimodules and then linearly.

The proof of the following proposition is analogous to  the previous one and will be omitted. 

\begin{proposition}
The family of functions $\eta=\{\eta_{i}\}_{i\in \mathbb{N}}$ is a morphism of complexes.
\end{proposition}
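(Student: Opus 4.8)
The plan is to reproduce the inductive scheme of Proposition \ref{phiesdecomplejos}, interchanging the roles of the two resolutions: where that proof built $\varphi$ from the bar homotopy $\{t_i\}$ and exploited the identity $\delta_j t_j = \id - t_{j-1}\delta_{j-1}$, this one will use the contracting homotopy $\{s_i\}_{i\geq -1}$ of Proposition \ref{complex} for $C_{min}(A)$ together with the companion identity $d_j s_j = \id - s_{j-1} d_{j-1}$. Concretely, to say that $\eta$ is a morphism of complexes means verifying $d_i \eta_{i+1} = \eta_i \delta_i$ for every $i$. Since $\eta_i$, $d_i$ and $\delta_i$ are all $A$-bimodule morphisms, it suffices to check this equality on the bimodule generators $x = 1\otimes_E \overline{a_1}\otimes_E \cdots \otimes_E \overline{a_{i+1}}\otimes_E 1$, and I would carry out the verification by induction on $i$.

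For the base case $i=0$ one has $\eta_0 = \id$, so the claim reads $d_0\eta_1 = \delta_0$; unwinding $\eta_1 = s_0\eta_0\delta_0$ and applying the homotopy identity $d_0 s_0 = \id - s_{-1}\mu$ to $\delta_0(x)$, the correction term vanishes because $\mu\delta_0 = 0$, giving $d_0\eta_1(x) = \delta_0(x)$. This is the same mechanism that drives the inductive step, so I would either record it separately or absorb it into the general argument.

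For the inductive step I would assume $d_{j-1}\eta_j = \eta_{j-1}\delta_{j-1}$ and compute, for $x = 1\otimes_E \overline{a_1}\otimes_E\cdots\otimes_E\overline{a_{j+1}}\otimes_E 1$, the chain
\begin{align*}
d_j\eta_{j+1}(x) &= d_j s_j \eta_j \delta_j(x) \\
&= (\id - s_{j-1}d_{j-1})\,\eta_j\delta_j(x)\\
&= \eta_j\delta_j(x) - s_{j-1}\eta_{j-1}\delta_{j-1}\delta_j(x)\\
&= \eta_j\delta_j(x),
\end{align*}
where the first equality is the definition of $\eta_{j+1}$, the second is the homotopy identity of Proposition \ref{complex} applied to the element $\eta_j\delta_j(x)$ of the degree-$j$ term of $C_{min}(A)$, the third uses the inductive hypothesis, and the last holds because $\delta_{j-1}\delta_j = 0$. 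This mirrors the chain of equalities in Proposition \ref{phiesdecomplejos}, with $\delta_{j-1}\delta_j = 0$ playing the role that $d_{j-1}d_j = 0$ played there.

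I do not expect a genuine obstacle: once the homotopy identity of Proposition \ref{complex} is granted, the argument is purely formal. The only points deserving care are the degree bookkeeping---making sure $d_j s_j + s_{j-1}d_{j-1} = \id$ is invoked in degree $j$, that is, on $\eta_j\delta_j(x)$---and the justification of the reduction to two-sided unital generators, which is legitimate precisely because $\eta_i$, $d_i$ and $\delta_i$ are bimodule maps, even though the homotopy $s_i$ is only left $A$-linear. This asymmetry is harmless, since $s_{j-1}$ is applied to a single fixed element rather than commuted past the module action. The substantive content therefore sits entirely in Proposition \ref{complex}, and the present proposition follows from it formally.
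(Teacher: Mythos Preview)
Your proposal is correct and is precisely the analogous argument the paper alludes to: the paper omits the proof, stating only that it is analogous to that of Proposition \ref{phiesdecomplejos}, and your inductive scheme---using $d_j s_j = \id - s_{j-1}d_{j-1}$ in place of $\delta_j t_j = \id - t_{j-1}\delta_{j-1}$ and $\delta_{j-1}\delta_j = 0$ in place of $d_{j-1}d_j = 0$---is exactly that analogy. Your remark on the left-linearity of $s_i$ is well placed and handles the only potential subtlety.
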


\bigskip

Let us obtain the explicit formulas of $\eta_{i}$ for lower degrees.
For $i=1$, and $1\otimes_{E}\overline{a}\otimes_{E}1 \in A\otimes_{E}\overline{A}\otimes_{E}A$:
\begin{align*}
\eta_{1}(1\otimes_{E}\overline{a}\otimes_{E}1)&=s_{0}\eta_{0}(a\otimes_{E}1-1\otimes_{E}a)\\&=
s_{0}(a\otimes_{E}1-1\otimes_{E}a)\\&=\sum a^{(1)}\otimes_{E}\overleftarrow{a^{(2)}}\otimes_{E}a^{(3)},
\end{align*}
since $s_{0}(a\otimes_{E}1)=0$.

We continue now with $\eta_{2}$. Given $1\otimes_{E}a_{1}\otimes_{E}a_{2}\otimes_{E}1\in A\otimes_{E}\overline{A}^{\otimes_{E} 2}\otimes_{E}A$ with $a_{1}, a_{2}\in \overline{\mathcal{B}}=\mathcal{B}-\{e_{x}:x\in Q_{0}\}$,
\begin{align*}
\eta_{2}(1\otimes_{E}a_{1}\otimes_{E}a_{2}\otimes_{E}1)&=s_{1}\eta_{1}\delta_{1}(1\otimes_{E}a_{1}\otimes_{E}a_{2}\otimes_{E}1)\\&=
s_{1}\eta_{1}(a_{1}\otimes_{E}a_{2}\otimes_{E}1-1\otimes_{E}a_{1}a_{2}\otimes_{E}1+1\otimes_{E}a_{1}\otimes_{E}a_{2}).
\end{align*}

There are three different cases to consider:
\begin{enumerate}
\item If $a_{1}a_{2}\in \mathcal{B}$, then
\begin{align*}
\eta_{1}\delta_{1}(1\otimes_{E}a_{1}\otimes_{E}a_{2}\otimes_{E}1)&=\sum a_{1}a_{2}^{(1)}\otimes_{E}\overleftarrow{a_{2}^{(2)}}\otimes_{E}a_{2}^{(3)}\\&-\sum (a_{1}a_{2})^{(1)}\otimes_{E}\overleftarrow{(a_{1}a_{2})^{(2)}}\otimes_{E}(a_{1}a_{2})^{(3)}\\&+\sum a_{1}^{(1)}\otimes_{E}\overleftarrow{a_{1}^{(2)}}\otimes_{E}a_{1}^{(3)}a_{2}=0.\\
\end{align*}

So, if $a_{1}a_{2}\in \mathcal{B}$: 
$\eta_{2}(1\otimes_{E}a_{1}\otimes_{E}a_{2}\otimes_{E}1)=0$.

\item If $a_{1}a_{2}$ contains at least one monomial relation, let $\{\sigma_{1}, \sigma_{2},\dots ,\sigma_{p}\}$ be the set of 
monomial relations that appear in $a_{1}a_{2}$, ordered by their starting point in $a_{1}$. For every $\sigma_{i}$ we write 
$a_{1}a_{2}=(\sigma_{i})^{l}\sigma_{i}(\sigma_{i})^{r}$ where $(\sigma_{i})^{l}$ is the part of $a_{1}a_{2}$ on the left of 
$\sigma_{i}$ and $(\sigma_{i})^{r}$ is the right part.
    By definition of  $\eta_{1}$,
\noindent\begin{align*}
\noindent\eta_{2}(1\otimes_{E}a_{1}\otimes_{E}a_{2}\otimes_{E}1)&=\sum s_{1}(a_{1}a_{2}^{(1)}\otimes_{E}\overleftarrow{a_{2}^{(2)}}\otimes_{E}a_{2}^{(3)})\\&+\sum s_{1}(a_{1}^{(1)}\otimes_{E}\overleftarrow{a_{1}^{(2)}}\otimes_{E}a_{1}^{(3)}a_{2}).
\end{align*}

The first summand is zero since $a_{2}\in \overline{\mathcal{B}}$ and this implies that no monomial relation in $a_{1}a_{2}$ starts 
after the end of $a_{1}$. With respect to the second summand, observe that if $\overleftarrow{a_{1}^{(2)}}$ is an arrow that starts 
before the first arrow of $\sigma_{p}$, then $a_{1}^{(3)}a_{2}=0$ since it contains $\sigma_{p}$. In conclusion, if $a_{1}a_{2}$ 
contains a monomial relation, then:
\[\eta_{2}(1\otimes_{E}a_{1}\otimes_{E}a_{2}\otimes_{E}1)=(\sigma_{p})^{l}\otimes_{E}\sigma_{p}\otimes_{E}(\sigma_{p})^{r}.\]

\item If $a_{1}a_{2}=W_{\rho_{i}}$ for some relation $\rho_{i}\in R_{nomon}$, then
$\eta_{2}(1\otimes_{E}a_{1}\otimes_{E}a_{2}\otimes_{E}1)$ equals
\scalebox{0.9} {$s_{1}\left(\sum a_{1}a_{2}^{(1)}\otimes_{E}\overleftarrow{a_{2}^{(2)}}\otimes_{E}a_{2}^{(3)}-\sum (f_{\rho_{i}})^{(1)}\otimes_{E}\overleftarrow{(f_{\rho_{i}})^{(2)}}\otimes_{E}(f_{\rho_{i}})^{(3)}+\sum a_{1}^{(1)}\otimes_{E}\overleftarrow{a_{1}^{(2)}}\otimes_{E}a_{1}^{(3)}a_{2}\right).$}\\
The map $s_{1}$ applied to the first and second summands is zero. Applying $s_{1}$ to the third summand we conclude that
$$\eta_{2}(1\otimes_{E}a_{1}\otimes_{E}a_{2}\otimes_{E}1)=1\otimes_{E}\rho_{i}\otimes_{E}1.$$

\end{enumerate}
We summarize the information obtained as follows:

\begin{equation*}
      \eta_{2}(\lambda\otimes_{E}a_{1}\otimes_{E}a_{2}\otimes_{E}\mu)=\left\lbrace
  \begin{array}{cc}
     0&\text{ if $a_{1}a_{2}\in \mathcal{B}$,}  \\
    \lambda(\sigma_{p})^{l}\otimes_{E}\sigma_{p}\otimes_{E}(\sigma_{p})^{r}\mu&\text{if $a_{1}a_{2}\in I$,} \\
    \lambda\otimes_{E}\rho_{i}\otimes_{E}\mu&\text{if $a_{1}a_{2}=W_{\rho_{i}}$.}

\end{array}
  \right.
\end{equation*}
\vspace{1em}

We will need the explicit formula of $\eta_{3}$, for the purpose of computing it we are going to consider four different cases.

Given $a_{1}, a_{2}, a_{3}\in \overline{\mathcal{B}}$, let us consider 
$\lambda\otimes_{E}a_{1}\otimes_{E}a_{2}\otimes_{E}a_{3}\otimes_{E}\mu\in A\otimes_{E}\overline{A}^{\otimes_{E} 3}\otimes_{E}A$; 
the expression
$\eta_{3}(\lambda\otimes_{E}a_{1}\otimes_{E}a_{2}\otimes_{E}a_{3}\otimes_{E}\mu)$ is equal to: 
\[\lambda(s_{2}\eta_{2}(a_{1}\otimes_{E}a_{2}\otimes_{E}a_{3}\otimes_{E}1-
1\otimes_{E}a_{1}a_{2}\otimes_{E} a_{3}\otimes_{E}1+1\otimes_{E}a_{1}\otimes_{E}a_{2}a_{3}\otimes_{E}1-1\otimes_{E}a_{1}\otimes_{E}a_{2}\otimes_{E}a_{3}))\mu.\]
\bigbreak
\begin{enumerate}
\item We will start with the case where $a_{1}a_{2}\in \mathcal{B}$ and $a_{2}a_{3}\in \mathcal{B}$.
    Applying the definition of $\eta_{2}$ in the first and last summands, they vanish, so we obtain that 
    \[\eta_{3}(\lambda\otimes_{E}a_{1}\otimes_{E}a_{2}\otimes_{E}a_{3}\otimes_{E}\mu)=\lambda(s_{2}(-\eta_{2}(1\otimes_{E}a_{1}a_{2}\otimes_{E} a_{3}\otimes_{E}1)+\eta_{2}(1\otimes_{E}a_{1}\otimes_{E}a_{2}a_{3}\otimes_{E}1)))\mu.\]
    Observe that in this case 
    $\eta_{2}(1\otimes_{E}a_{1}a_{2}\otimes_{E} a_{3}\otimes_{E}1)=\eta_{2}(1\otimes_{E}a_{1}\otimes_{E}a_{2}a_{3}\otimes_{E}1)$ 
    since, by definition of $\eta_{2}$ both terms depend on $a_{1}a_{2}a_{3}$.

In conclusion, if $a_{1}a_{2}\in \mathcal{B}$ and $a_{2}a_{3}\in \mathcal{B}$, then:
\[\eta_{3}(\lambda\otimes_{E}a_{1}\otimes_{E}a_{2}\otimes_{E} a_{3}\otimes_{E}\mu)=0.\]

\item Let us now consider the case where $a_{1}a_{2}\in \mathcal{B}$ and $a_{2}a_{3}\in I$. In this case,
 \[\eta_{3}(\lambda\otimes_{E}a_{1}\otimes_{E}a_{2}\otimes_{E}a_{3}\otimes_{E}\mu)=\lambda(s_{2}\eta_{2}(a_{1}\otimes_{E}a_{2}\otimes_{E}a_{3}\otimes_{E}1-
1\otimes_{E}a_{1}a_{2}\otimes_{E} a_{3}\otimes_{E}1))\mu.\]
By definition of $\eta_{2}$ we obtain that
    \[\eta_{3}(\lambda\otimes_{E}a_{1}\otimes_{E}a_{2}\otimes_{E}a_{3}\otimes_{E}\mu)=\lambda(s_{2}(a_{1}(\sigma_{p})^{l}\otimes_{E}\sigma_{p}\otimes_{E}
(\sigma_{p})^{r}-
    a_{1}(\sigma_{p})^{l}\otimes_{E}\sigma_{p}\otimes_{E}(\sigma_{p})^{r}))\mu\]
    since the last relation of $a_{2}a_{3}$ starting from the left coincides with the last one of $a_{1}a_{2}a_{3}$.
    \bigbreak
    In conclusion, if $a_{1}a_{2}\in \mathcal{B}$ and $a_{2}a_{3}\in I$, then:
    \[\eta_{3}(\lambda\otimes_{E}a_{1}\otimes_{E}a_{2}\otimes_{E}a_{3}\otimes_{E}\mu)=0.\]

\item Let us continue with the symmetric case, this means $a_{1}a_{2}\in I$ and $a_{2}a_{3}\in \mathcal{B}$.
\[\eta_{3}(\lambda\otimes_{E}a_{1}\otimes_{E}a_{2}\otimes_{E}a_{3}\otimes_{E}\mu)=\lambda(s_{2}\eta_{2}(1\otimes_{E}a_{1}\otimes_{E}a_{2}a_{3}\otimes_{E}1-
1\otimes_{E}a_{1}\otimes_{E}a_{2}\otimes_{E}a_{3}))\mu.\]
Using the definition of $\eta_{2}$ in this case we obtain
\[\eta_{3}(\lambda\otimes_{E}a_{1}\otimes_{E}a_{2}\otimes_{E}a_{3}\otimes_{E}\mu)=\lambda(s_{2}((\delta_{r})^{l}\otimes_{E}\delta_{r}\otimes_{E}(\delta_{r})^{r}-
(\sigma_{q})^{l}\otimes_{E}\sigma_{q}\otimes_{E}(\sigma_{q})^{r}a_{3}))\mu\]
where $\delta_{r}$ is the last relation in $a_{1}a_{2}a_{3}$ and $\sigma_{q}$ is the last relation of $a_{1}a_{2}$ both starting from 
the left.\\
Observe that the first term vanishes since $\delta_{r}(\delta_{r})^{r}$ does not contain $2$-ambiguities.
Applying $s_{2}$ to the second term we get
        \begin{equation} \label{ecuacion}
\lambda(\sum (\sigma_{q})^{l}(\sigma_{q}\sigma_{q}^{r}a_{3})^{(1)}\otimes_{E}(\sigma_{q}\sigma_{q}^{r}a_{3})^{(2)}\otimes_{E}(\sigma_{q}\sigma_{q}^{r}a_{3})^{(3)})\mu
\end{equation}
 with $(\sigma_{q}\sigma_{q}^{r}a_{3})^{(2)}\in \mathcal{A}_{2}$. Notice that the sum is zero if there is no $2$-ambiguity contained 
 in $\sigma_{q}\sigma_{q}^{r}a_{3}.$
 
 Consider the set $\{\sigma_{1},\dots ,\sigma_{p}\}$ of the monomial relations contained in $a_{1}a_{2}a_{3}$ ordered by their starting point in $a_{1}$. For every $1\leq i\leq p-1$, we will call $w_{i}$ the $2$-ambiguity obtained from $\sigma_{i}$ and $\sigma_{i+1}$.
Rewriting \eqref{ecuacion}, if $a_{1}a_{2}\in I$ and $a_{2}a_{3}\in \mathcal{B}$ then
$$\eta_{3}(\lambda\otimes_{E}a_{1}\otimes_{E}a_{2}\otimes_{E}a_{3}\otimes_{E}\mu)=\sum_{i=q}^{p-1}\lambda(w_{i})^{l}\otimes_{E}w_{i}\otimes_{E}(w_{i})^{r}\mu.$$

\item Lastly we will analyse the case where $a_{1}a_{2}\in I$ and $a_{2}a_{3}\in I$.
Here, after applying the definition of $\eta_{2}$,
\[\eta_{3}(\lambda\otimes_{E}a_{1}\otimes_{E}a_{2}\otimes_{E}a_{3}\otimes_{E}\mu)=\lambda(s_{2}(a_{1}\delta_{r}^{l}\otimes_{E}\delta_{r}\otimes_{E}\delta_{r}^{r}-
\sigma_{q}^{l}\otimes_{E}\sigma_{q}\otimes_{E}\sigma_{q}^{r}a_{3}))\mu\]
where $\delta_{r}$ is the last relation of $a_{2}a_{3}$ and $\sigma_{q}$ is the last relation of $a_{1}a_{2}$.

If $\delta_{r}$ and $\sigma_{q}$ are disjoint, the expressions $a_{1}\delta_{r}^{l}\otimes_{E}\delta_{r}\otimes_{E}\delta_{r}^{r}$ and 
$\sigma_{q}^{l}\otimes_{E}\sigma_{q}\otimes_{E}\sigma_{q}^{r}a_{3}$ must be zero.
Now consider the case where $\delta_{r}$ and $\sigma_{q}$ are not disjoint. The first term vanishes since $\delta_{r}\delta_{r}^{r}$ 
does not contain 2-ambiguities.
Let $\{\sigma_{1},\dots ,\sigma_{p}\}$ be the set of monomial relations in $a_{1}a_{2}a_{3}$, ordered by their starting point in 
$a_{1}a_{2}$. We have,
\[\eta_{3}(\lambda\otimes_{E}a_{1}\otimes_{E}a_{2}\otimes_{E}a_{3}\otimes_{E}\mu)=\sum_{i=q}^{p-1}\lambda(w_{i})^{l}\otimes_{E}w_{i}\otimes_{E}(w_{i})^{r}\mu\]
where $w_{i}$ is the 2-ambiguity that contains the relations $\sigma_{i}$ and $\sigma_{i+1}$.\\

\end{enumerate}

Summarising,

$
      \eta_{3}(\lambda\otimes_{E}a_{1}\otimes_{E}a_{2}\otimes_{E}a_{3}\otimes_{E}\mu)=\left\lbrace
  \begin{array}{cc}
     \lambda\sum_{i=q}^{p-1}(w_{i})^{l}\otimes_{E}w_{i}\otimes_{E}(w_{i})^{r}\mu&\hbox{ if } a_{1}a_{2}\in I
     \hbox{ and }
\delta_{r}\cap \sigma_{q}\neq \emptyset,  \\
    0&\hbox{otherwise,}
  \end{array}
\right.
$
\bigbreak
where $\{w_{i}\}_{i=q,\dots, p-1}$ is the set of $2$-ambiguities contained in $a_{1}a_{2}a_{3}$, such that 
$s(w_{i})\geq s(\sigma_{q})$ with $\sigma_{q}$ the last relation of $a_{1}a_{2}$ and $\delta_{r}$ the last relation of 
$a_{1}a_{2}a_{3}$.\bigbreak

The explicit formulas of the comparison morphisms in higher degrees are usually very hard to find. In spite of that, the next 
propositions will help us later to describe $\HH^{i}(A)$ as Lie representation of $\HH^{1}(A)$ for $i\geq 2$.

\begin{proposition}\label{lema1}
Let $i\in \mathbb{N}$ and $w=u_{0}\dots u_{i-1}=w_{i-1}\dots w_{0}$ an $(i-1)$-ambiguity. The comparison morphism $\varphi_{i}$ is 
such that
\[\varphi_{i}(1\otimes_{E} w\otimes_{E} 1)=\sum_{a^{(1)}\dots a^{(i+1)}=w}1\otimes_{E}\overline{a^{(1)}}\otimes_{E}\dots \otimes_{E}\overline{a^{(i)}}\otimes_{E}a^{(i+1)}\]
where $a^{(j)}$ with $1\leq j\leq i+1$ are non zero paths in A.
\end{proposition}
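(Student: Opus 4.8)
The plan is to prove the formula by induction on $i$, reading it off the recursive definition $\varphi_i(1\otimes_E w\otimes_E 1)=t_{i-1}\varphi_{i-1}d_{i-1}(1\otimes_E w\otimes_E 1)$ together with the explicit contracting homotopy $t$ of the $E$-reduced Bar resolution. For the base case $i=1$ one has, for an arrow $\alpha$, that $\varphi_1(1\otimes_E\alpha\otimes_E 1)=1\otimes_E\overline{\alpha}\otimes_E 1$, which is precisely the unique factorization of $\alpha$ into nonzero paths with a nontrivial barred factor. The formulas for $\varphi_2$ and $\varphi_3$ recorded in the Remark above are instances of the claim and serve as further anchor cases, and they already display the alternating behaviour of the differentials that the induction must handle.

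The engine of the argument is the following feature of $t_{i-1}$: it sends $a_0\otimes_E\overline{a_1}\otimes_E\cdots\otimes_E\overline{a_{i-1}}\otimes_E a_i$ to $1\otimes_E\overline{a_0}\otimes_E\overline{a_1}\otimes_E\cdots\otimes_E\overline{a_{i-1}}\otimes_E a_i$, so it prepends a new barred slot and, crucially, annihilates every summand whose leftmost tensor factor is already $1$, since such a summand acquires the factor $\overline{1}=0$. I would therefore apply $d_{i-1}$ to $1\otimes_E w\otimes_E 1$, then $\varphi_{i-1}$, then $t_{i-1}$, treating the two parities of $i-1$ separately. When $i-1$ is even, $d_{i-1}(1\otimes_E w\otimes_E 1)=w_{i-1}\otimes_E(w_{i-2}\cdots w_0)\otimes_E 1-1\otimes_E(u_0\cdots u_{i-2})\otimes_E u_{i-1}$; the inductive hypothesis expands each of the two $(i-2)$-ambiguities into its factorizations, and after applying $t_{i-1}$ the second summand vanishes because its leftmost factor is $1$, while the first one survives and contributes terms $1\otimes_E\overline{w_{i-1}}\otimes_E(\cdots)$ obtained by prepending $\overline{w_{i-1}}$ to the factorizations of $w_{i-2}\cdots w_0$. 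When $i-1$ is odd, $d_{i-1}(1\otimes_E w\otimes_E 1)=\sum w^{(1)}\otimes_E w^{(2)}\otimes_E w^{(3)}$ with $w^{(2)}$ an $(i-2)$-ambiguity; applying $\varphi_{i-1}$ and then $t_{i-1}$ turns each term into $1\otimes_E\overline{w^{(1)}}\otimes_E(\cdots)\otimes_E(\cdots)w^{(3)}$, the terms with $w^{(1)}$ trivial being killed by $\overline{1}=0$.

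In both cases the effect of one inductive step is to prepend a new nontrivial leftmost factor to the factorizations produced by $\varphi_{i-1}$, thereby lifting each factorization of the relevant $(i-2)$-ambiguity into $i$ pieces to a factorization of $w$ into $i+1$ pieces. The remaining work is to verify that this reproduces \emph{exactly} the factorizations $a^{(1)}\cdots a^{(i+1)}=w$ into nonzero paths listed on the right-hand side, each with coefficient $1$. Here one uses that every proper factor of an ambiguity avoids $I$, so that each $\overline{a^{(j)}}$ is genuinely nonzero in $\overline{A}$, together with the compatibility of the two factorizations $w=u_0\cdots u_{i-1}=w_{i-1}\cdots w_0$ guaranteed by the structure of $(i-1)$-ambiguities (Lemmas 3.1.5 and 3.1.6 of \cite{arten}), which is what forces the leftmost surviving factor and prevents distinct inductive branches from producing the same term twice.

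The main obstacle is precisely this last combinatorial identification: proving completeness (every admissible factorization occurs) and the absence of double counting, while pinning down which slots are forced to be single arrows and which may be longer paths, since the homotopy always singles out an arrow in the last barred slot through $d_0$ and $d_1$, whereas the leftmost factor is dictated by the ambiguity decomposition. Organising the induction so that these constraints are tracked uniformly across the parity alternation of $d$ is the delicate point; once that bookkeeping is in place, the vanishing $\overline{1}=0$ performs all the cancellations automatically and the stated formula follows.
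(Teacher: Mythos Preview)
Your proposal is correct and follows essentially the same approach as the paper: both argue by induction via the recursion $\varphi_i=t_{i-1}\varphi_{i-1}d_{i-1}$, split according to the parity of $i-1$, and use that $t_{i-1}$ kills any summand with leftmost factor $1$ since $\overline{1}=0$. The paper uses $i=1,2,3$ as anchor cases and, in the even and odd steps, writes exactly the two computations you outline; it then concludes rather tersely (in the odd case, simply ``observing that $w^{(1)}b^{(1)}\cdots b^{(i)}w^{(3)}=w$, the proof is complete''), whereas you flag the matching of factorizations and the absence of double counting as the delicate point---but this is only a difference in level of detail, not in strategy.
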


\begin{proof}
For the cases $i=1, 2, 3$ we use the explicit formulas. Let us now suppose that the result holds for some even $i-1\in \mathbb{N}$. Using the formula of $\varphi_{i}$,
\begin{align*}
\varphi_{i}(1\otimes_{E}w\otimes_{E}1)&= t_{i-1}\varphi_{i-1}d_{i-1}(1\otimes_{E}w\otimes_{E}1)\\&=t_{i-1}\varphi_{i-1}(w_{i-1}\otimes_{E}w_{i-2}\dots w_{0}\otimes_{E}1)\\&-t_{i-1}\varphi_{i-1}(1\otimes_{E}u_{0}\dots u_{i-2}\otimes_{E}u_{i-1}).
\end{align*}
Observe that the second summand is zero in the $E$-reduced Bar resolution due to the inductive hypothesis and the definition of 
$t_{i-1} $.
Finally, the inductive hypothesis for $\varphi_{i-1}(1\otimes_{E}w_{i-2}\dots w_{0}\otimes_{E} 1)$ and the definition of $t_{i-1}$, give the result.

When $i-1$ is odd,
\begin{align*}
\varphi_{i}(1\otimes_{E}w\otimes_{E}1)&=t_{i-1}\varphi_{i-1}d_{i-1}(1\otimes_{E}w\otimes_{E}1)\\&
=\sum_{w}t_{i-1}(w^{(1)}\varphi_{i-1}(1\otimes_{E}w^{(2)}\otimes_{E}1)w^{(3)})\\&=\sum_{w}\sum_{b^{(1)}\dots b^{(i)}=w^{(2)}}t_{i-1}(w^{(1)}\otimes_{E}\overline{b^{(1)}}\otimes_{E}\dots \otimes_{E}\overline{b^{(i-1)}}\otimes_{E}b^{(i)}w^{(3)})\\&=\sum_{w}\sum_{b^{(1)}\dots b^{(i)}=w^{(2)}}1\otimes_{E}\overline{w^{(1)}}\otimes_{E}\overline{b^{(1)}}\otimes_{E}\dots \otimes_{E}\overline{b^{(i-1)}}\otimes_{E}b^{(i)}w^{(3)},
\end{align*}
observing that $w^{(1)}b^{(1)}\dots b^{(i)}w^{(3)}=w$, the proof is complete.
\end{proof}

\begin{proposition}\label{prop4.6}
\begin{enumerate}
\item For any $i\geq 3$ and $a_{1},\dots, a_{i}$ in $\overline{\mathcal{B}}$  
\[\eta_{i}(1\otimes_{E} a_{1}\otimes_{E} \dots \otimes_{E}a_{i}\otimes_{E} 1)=\sum_{a}\alpha_{123}a^{(1)}\otimes_{E}a^{(2)}\otimes_{E}a^{(3)}\]
with $\alpha_{123}\in \mathds{k}$, $a^{(1)}a^{(2)}a^{(3)}=a=a_{1}\dots a_{i}$ and $a^{(2)}\in \mathcal{A}_{i-1}$.

In particular, if the path $a_{1}\dots a_{i}$ does not contain $(i-1)$-ambiguities, 
\[\eta_{i}(1\otimes_{E} a_{1}\otimes_{E} \dots \otimes_{E}a_{i}\otimes_{E} 1)=0.\]
\item For all $i\geq 1$ and $w=w_{0}\dots w_{i-1}=z_{i-1}\dots z_{0}$ an $(i-1)$-ambiguity: 
\[\eta_{i}(1\otimes_{E} \overline{w_{0}}\otimes_{E} \dots \otimes_{E} \overline{w_{i-1}}\otimes_{E} 1)=1\otimes_{E} w\otimes_{E} 1.\]
\end{enumerate}
\end{proposition}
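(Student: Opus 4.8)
The plan is to prove both statements simultaneously by induction on $i$, exploiting the recursive definition $\eta_{i}=s_{i-1}\eta_{i-1}\delta_{i-1}$ together with the explicit shape of the contracting homotopy $\{s_{j}\}$ from Proposition \ref{complex}. The cases $i=1,2,3$ are already covered by the explicit formulas obtained above for $\eta_{1},\eta_{2},\eta_{3}$, so they furnish the base of the induction. I would first settle part (1) for all $i\ge 3$, and then use it to prove part (2).

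For the inductive step of part (1), I would expand $\delta_{i-1}(1\otimes_{E}a_{1}\otimes_{E}\dots\otimes_{E}a_{i}\otimes_{E}1)$ into its $i+1$ bar summands: the term with $a_{1}$ pushed to the left coefficient, the $i-1$ inner summands where two consecutive factors are multiplied, and the term with $a_{i}$ pushed to the right coefficient. In every summand the product of the middle tensor factors, multiplied by the two outer coefficients, equals $a=a_{1}\cdots a_{i}$. Applying $\eta_{i-1}$ and invoking the induction hypothesis, each summand is sent to a combination of terms $\lambda\otimes_{E}p\otimes_{E}\mu$ with $p\in\mathcal{A}_{i-2}$ and $\lambda p\mu$ a subword of $a$. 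Finally I would apply $s_{i-1}$: since it is left $A$-linear and, by definition, sends $e_{s(p)}\otimes_{E}p\otimes_{E}\mu$ to $(-1)^{i}\sum (p\mu)^{(1)}\otimes_{E}(p\mu)^{(2)}\otimes_{E}(p\mu)^{(3)}$ with middle factor in $\mathcal{A}_{i-1}$, the whole expression becomes a combination of factorizations of $a$ whose middle factor is an $(i-1)$-ambiguity. This is exactly the asserted form, and the \emph{in particular} clause is then immediate: if $a$ contains no $(i-1)$-ambiguity there are no admissible factorizations and the sum is empty.

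For part (2) the extra input is that, for $i\ge 4$, the $(i-1)$-ambiguity $w$ is monomial, since (as observed before Proposition \ref{complex}) every ambiguity of order $\ge 2$ arises from $\mathfrak{R}_{1}$. Writing $w=w_{0}\cdots w_{i-1}$ for its left factorization, condition (ii) of the definition of left $n$-ambiguity gives $w_{k-1}w_{k}\in I$ for every $k$, hence $w_{k-1}w_{k}=0$ in $A$. Therefore every inner (merge) summand of $\delta_{i-1}(1\otimes_{E}\overline{w_{0}}\otimes_{E}\dots\otimes_{E}\overline{w_{i-1}}\otimes_{E}1)$ vanishes, and only the two outer summands survive:
\[w_{0}\otimes_{E}\overline{w_{1}}\otimes_{E}\dots\otimes_{E}\overline{w_{i-1}}\otimes_{E}1+(-1)^{i}\,1\otimes_{E}\overline{w_{0}}\otimes_{E}\dots\otimes_{E}\overline{w_{i-2}}\otimes_{E}w_{i-1}.\]
By part (1) the final answer is a scalar multiple of $1\otimes_{E}w\otimes_{E}1$ (the only factorization of $w$ with an $(i-1)$-ambiguity in the middle, by the uniqueness of ambiguity decompositions in Lemmas 3.1.5 and 3.1.6 of \cite{arten}), so it only remains to compute the coefficient. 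The first surviving summand contributes nothing: applying $\eta_{i-1}$ leaves $w_{0}$ as a left coefficient and $s_{i-1}$ keeps every left coefficient divisible by $w_{0}\ne e$, so it cannot produce $1\otimes_{E}w\otimes_{E}1$. For the second summand, dropping the last left-factor of a left $(i-1)$-ambiguity yields a left $(i-2)$-ambiguity $\widetilde{w}=w_{0}\cdots w_{i-2}$, so the induction hypothesis for part (2) gives $\eta_{i-1}(1\otimes_{E}\overline{w_{0}}\otimes_{E}\dots\otimes_{E}\overline{w_{i-2}}\otimes_{E}1)=1\otimes_{E}\widetilde{w}\otimes_{E}1$, whence the second summand maps to $(-1)^{i}\,1\otimes_{E}\widetilde{w}\otimes_{E}w_{i-1}$. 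Applying $s_{i-1}$, using $\widetilde{w}\,w_{i-1}=w$ and again uniqueness of the decomposition, the two signs cancel as $(-1)^{i}(-1)^{i}=1$ and I obtain $1\otimes_{E}w\otimes_{E}1$.

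The main obstacle is the bookkeeping concentrated in part (2): one must be certain that all merge summands genuinely die (which is exactly where the monomiality of higher ambiguities and condition (ii) enter), that no unexpected factorization of $w$ with an $(i-1)$-ambiguity in the middle slot appears, and that the two factors $(-1)^{i}$ cancel precisely. These points rest on the combinatorial structure of ambiguities—in particular the recursive fact that a left $n$-ambiguity restricts to a left $(n-1)$-ambiguity after deleting its last factor, and the uniqueness of ambiguity decompositions encoded in Lemmas 3.1.5 and 3.1.6 of \cite{arten}. The one place where this clean monomial argument does not yet apply is the verification of the base cases $i=2,3$, where non-monomial relations still intervene and the identity must be matched directly against the explicit formulas for $\eta_{2}$ and $\eta_{3}$.
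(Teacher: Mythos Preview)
Your proposal is correct and follows essentially the same inductive scheme as the paper: expand $\delta_{i-1}$, kill the inner merge summands via $w_{k-1}w_k\in I$, use the inductive hypothesis on the last summand, and dispose of the first summand. The only minor variation is in that last step: you argue the first summand cannot contribute to the coefficient of $1\otimes_E w\otimes_E 1$ by left-divisibility by $w_0$, whereas the paper shows it vanishes outright by observing (via part~(1) and the toupie shape of $Q$) that $w_1\cdots w_{i-1}$ contains no $(i-1)$-ambiguity, so $s_{i-1}$ annihilates it directly.
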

\begin{proof}
$\emph{(1)}$ The case $i=3$ has already been checked. For the inductive step,
\begin{align*}
\eta_{i}(1\otimes_{E} a_{1}\otimes_{E} \dots \otimes_{E}a_{i}\otimes_{E} 1)&=s_{i-1}\eta_{i-1}\delta_{i-1}(1\otimes_{E} a_{1}\otimes_{E} a_{2}\otimes_E\dots \otimes_{E}a_{i}\otimes_{E} 1)\\&=a_{1}s_{i-1}\eta_{i-1}(1\otimes_{E}a_{2}\otimes_E \dots \otimes_{E}a_{i}\otimes_{E} 1)\\&+\sum_{j=1}^{i-1}(-1)^{j}s_{i-1}\eta_{i-1}(1\otimes_{E}a_1\otimes_E\dots \otimes_{E}a_{j}a_{j+1}\otimes_{E}\dots\otimes_{E}1)\\&+(-1)^{i}s_{i-1}(\eta_{i-1}(1\otimes_{E}a_{1}\otimes_{E}\dots \otimes_E a_{i-1}\otimes_{E}1)a_{i}).
\end{align*}
Using the inductive hypothesis and the definition of $s_{i-1}$ in every summand, the result is obtained.
\vspace{2mm}

$\emph{(2)}$ Firstly observe that since $w_{j}, z_{k}\in \overline{\mathcal{B}}$ for $0\leq j,k\leq i-1$ it will not be necessary to 
take classes of $w_{j}$ or $z_{k}$ in $\overline{A}$.

The case where $i=1$ is clear using the definition of $\eta_{1}$.
For $i=2$, the explicit formula of $\eta_{2}$ gives
\[\eta_{2}(1\otimes_{E}w_{0}\otimes_{E} w_{1}\otimes_{E} 1)=(\sigma_{p})^{l}\otimes_{E} \sigma_{p}\otimes_{E}(\sigma_{p})^{r}\]
 where $\sigma_{p}$ is the last monomial relation from the left that appears in $w_{0}w_{1}$. Since, by  definition of ambiguity, 
 $w_{0}w_{1}$ cannot strictly contain a monomial relation, $\sigma_{p}=w_{0}w_{1}$ and the result is proven.

For the inductive step,
\begin{align*}
\eta_{i}(1\otimes_{E} w_{0}\otimes_{E} \dots \otimes_{E} w_{i-1}\otimes_{E} 1)&=s_{i-1}\eta_{i-1}\delta_{i-1}(1\otimes_{E} w_{0}\otimes_{E} \dots \otimes_{E} w_{i-1}\otimes_{E} 1)\\&=w_{0}s_{i-1}\eta_{i-1}(1\otimes_{E} \dots \otimes_{E} w_{i-1}\otimes_{E} 1)\\&+ \sum_{j=0}^{i-2}(-1)^{j+1}s_{i-1}\eta_{i-1}(1\otimes_{E} \dots \otimes_{E} w_{j}w_{j+1}\otimes_{E}\dots\otimes_{E} 1)\\&+ (-1)^{i}s_{i-1}\eta_{i-1}(1\otimes_{E} w_{0}\otimes_{E} \dots \otimes_{E} w_{i-1}).
\end{align*}
The summands in the second line vanish since $w_{j}w_{j+1}\in I$ for all $j=0,\dots ,i-2$.
Using the inductive hypothesis for the last summand it turns out that it is equal to
\[(-1)^{i}s_{i-1}(1\otimes_{E} w_{0}w_{1} \dots w_{i-2}\otimes_{E} w_{i-1})\]
and using the definition of $s_{i-1}$ the expression equals to $1\otimes_{E} w \otimes_{E} 1$, since $w$ is the only $(i-1)$-ambiguity 
contained in $w_{0}\dots w_{i-1}$.

The last step is to prove that the first summand vanishes. Using the first part of this proposition, the term 
$\eta_{i-1}(1\otimes_{E} w_{1}\otimes_{E} \dots \otimes_{E} w_{i-1}\otimes_{E} 1)$ will be a linear combination of elements of the 
form $a^{(1)}\otimes_{E} a^{(2)}\otimes_{E} a^{(3)}$ with $a=a^{(1)} a^{(2)}a^{(3)}$ a path from $s(w_{1})$ to $t(w_{i-1})$ 
and $a^{(2)}$ an $(i-2)$-ambiguity. Since the path $w_{1}$ does not start at $0$ and the quiver is toupie, the only path 
from $s(w_{1})$ to $t(w_{i-1})$ is $w_{1}\dots w_{i-1}$. The result of applying $s_{i-1}$ to the linear combination vanishes 
since $a^{(1)} a^{(2)}a^{(3)}=w_{1}\dots w_{i-1}$ and it cannot contain an $(i-1)$-ambiguity.
\end{proof}

\begin{proposition}\label{lema2}
For any toupie algebra $A$, $\eta$ is a left inverse of $\varphi$.
\end{proposition}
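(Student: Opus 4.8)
The plan is to prove that $\eta\circ\varphi = \id_{C_{min}(A)}$ by showing that this composite coincides with the identity in each degree $i$. Since both $\eta_i$ and $\varphi_i$ are morphisms of $A$-bimodules, it suffices to evaluate the composite $\eta_i\circ\varphi_i$ on the generators $1\otimes_{E} w\otimes_{E} 1$ of $P_i$, where $w$ ranges over $Q_1$ when $i=1$, over $\mathcal{R}$ when $i=2$, and over the set of $(i-1)$-ambiguities $\mathcal{A}_{i-1}$ when $i\geq 3$. The cases $i=0,1,2$ can be checked directly from the explicit formulas already computed for $\varphi_0,\varphi_1,\varphi_2$ and $\eta_0,\eta_1,\eta_2$ in the preceding part of the section; in particular $\varphi_0=\eta_0=\id$, and for $i=1,2$ one substitutes the low-degree formulas and simplifies.

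The heart of the argument is the case $i\geq 3$, and here the two technical propositions just proved do the work. First I would apply Proposition \ref{lema1} to rewrite $\varphi_i(1\otimes_{E} w\otimes_{E} 1)$ as the sum
\[\varphi_{i}(1\otimes_{E} w\otimes_{E} 1)=\sum_{a^{(1)}\cdots a^{(i+1)}=w}1\otimes_{E}\overline{a^{(1)}}\otimes_{E}\cdots \otimes_{E}\overline{a^{(i)}}\otimes_{E}a^{(i+1)},\]
where $w=w_{i-1}\cdots w_0$ is the fixed $(i-1)$-ambiguity. Then I would apply $\eta_i$ termwise. The crucial observation is that the only factorization of $w$ whose middle factors form a genuine $(i-1)$-ambiguity is the trivial one in which $a^{(1)}=\cdots =a^{(i+1)}$ realizes the full ambiguity with $a^{(1)}=a^{(i+1)}=e$ (idempotents) and $\overline{a^{(1)}},\dots,\overline{a^{(i)}}$ equal to the arrows-blocks $w_0,\dots,w_{i-1}$ of the ambiguity itself. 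For every other factorization, the tuple $(a^{(1)},\dots,a^{(i)})$ either contains an idempotent class (which is zero in $\overline{A}$, killing the term) or fails to be an $(i-1)$-ambiguity, so by the first part of Proposition \ref{prop4.6} the value of $\eta_i$ on it vanishes.

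What remains is precisely the surviving term, and here the second part of Proposition \ref{prop4.6} applies verbatim: it states that $\eta_i(1\otimes_{E}\overline{w_0}\otimes_{E}\cdots\otimes_{E}\overline{w_{i-1}}\otimes_{E}1)=1\otimes_{E} w\otimes_{E} 1$. Combining these, $\eta_i\varphi_i(1\otimes_{E} w\otimes_{E} 1)=1\otimes_{E} w\otimes_{E} 1$, which is the desired identity on generators; extending by $A$-bimodule linearity completes the degree-$i$ verification.

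I expect the main obstacle to be the bookkeeping needed to justify that no spurious terms survive the application of $\eta_i$ — that is, making rigorous the claim that among all factorizations $a^{(1)}\cdots a^{(i+1)}=w$ produced by Proposition \ref{lema1}, only the factorization into the blocks of the ambiguity yields a nonzero image. This requires using the combinatorial structure of ambiguities in a toupie algebra (that $w$ admits a unique decomposition as an $(i-1)$-ambiguity, as was used in the proof of Proposition \ref{prop4.6}) together with the fact that classes $\overline{a^{(j)}}$ vanish when $a^{(j)}$ is an idempotent. Once this uniqueness is invoked, the rest is a direct substitution. A minor additional point to treat carefully is separating the parities of $i$, since the inductive formulas for $\varphi_i$ differ according to whether the ambiguity is read as a left or right ambiguity, but Proposition \ref{lema1} has already packaged both parities into a single clean formula, so this does not add real difficulty.
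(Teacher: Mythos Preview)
Your overall strategy for $i\geq 3$ has a real gap at the key step. You assert that for any factorization $a^{(1)}\cdots a^{(i+1)}=w$ other than the ambiguity decomposition $(w_0,\ldots,w_{i-1},e)$, the value $\eta_i(1\otimes\overline{a^{(1)}}\otimes\cdots\otimes\overline{a^{(i)}}\otimes a^{(i+1)})$ vanishes, invoking Proposition~\ref{prop4.6}(1). But that proposition only guarantees vanishing when the \emph{product} $a^{(1)}\cdots a^{(i)}$ contains no $(i-1)$-ambiguity; it says nothing about the tuple failing to be the ambiguity decomposition. Whenever $a^{(i+1)}$ is the idempotent (and there are $\binom{|w|-1}{i-1}$ such factorizations, one for each way to cut $w$ into $i$ positive-length pieces), the product $a^{(1)}\cdots a^{(i)}$ equals $w$ itself, which certainly contains the $(i-1)$-ambiguity $w$. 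Proposition~\ref{prop4.6}(1) then only tells you the result is a scalar multiple of $1\otimes w\otimes 1$, with an undetermined coefficient; part~(2) pins down that coefficient only for the single factorization $(w_0,\ldots,w_{i-1})$. To complete your approach you would have to compute $\eta_i$ on each of the remaining factorizations and show the contributions cancel, which is exactly the bookkeeping you flag as the obstacle---and it is not handled by the propositions you cite.

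The paper bypasses this difficulty with an inductive argument using the homotopy definitions directly. Writing $\eta_i\varphi_i=s_{i-1}\eta_{i-1}\delta_{i-1}t_{i-1}\varphi_{i-1}d_{i-1}$ and applying the relation $\delta_{i-1}t_{i-1}=\id-t_{i-2}\delta_{i-2}$, one term vanishes because $\delta_{i-2}\varphi_{i-1}d_{i-1}=\varphi_{i-2}d_{i-2}d_{i-1}=0$ ($\varphi$ is a chain map), and the other reduces via the inductive hypothesis $\eta_{i-1}\varphi_{i-1}=\id$ to the easy check that $s_{i-1}d_{i-1}(1\otimes w\otimes 1)=1\otimes w\otimes 1$. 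This avoids any termwise analysis of the factorization sum.
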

\begin{proof}
Again, we will prove the result by induction.
The cases $i=0$ and $i=1$ are almost immediate using the corresponding explicit formulas.
Consider $i\geq 2$. Observe that it is sufficient to verify the equality for the elements of the form 
$1\otimes_{E}u\otimes_{E}1 \in P_{i}$.

Let us check the result for $i=2$. Given a monomial relation $\sigma_{i}$,
\begin{align*}
\eta_{2}\varphi_{2}(1\otimes_{E} \sigma_{i}\otimes_{E} 1)&=\eta_{2}\left(\sum_{\sigma_{i}}1\otimes_{E}\overline{\sigma_{i}^{(1)}}\otimes_{E}\overleftarrow{\sigma_{i}^{(2)}}\otimes_{E}\sigma_{i}^{(3)}\right)\\&=
1\otimes_{E} \sigma_{i}\otimes_{E} 1
\end{align*}
since, by definition of $\eta_{2}$, the only summand which is not zero is when $\sigma_{i}^{(3)}=1$.

Let us now consider a non monomial relation of the form
 $$\rho_{i}=\alpha^{(k_{i})}+\sum_{j>k_{i}}b_{ij}\alpha^{(j)}.$$
In this case,
\begin{align*}
\eta_{2}\varphi_{2}(1\otimes_{E} \rho_{i}\otimes_{E} 1)&=\sum_{\alpha^{(k_{i})}}\eta_{2}(1\otimes_{E}\overline{(\alpha^{(k_{i})})^{(1)}}\otimes_{E}\overleftarrow{(\alpha^{(k_{i})})^{(2)}}\otimes_{E}
(\alpha^{(k_{i})})^{(3)})\\&+\sum_{j>k_{i}}b_{ij}\sum_{\alpha^{(j)}}\eta_{2}(1\otimes_{E}\overline{(\alpha^{(j)})^{(1)}}\otimes_{E}
\overleftarrow{(\alpha^{(j)})^{(2)}}\otimes_{E}
(\alpha^{(j)})^{(3)}).
\end{align*}
In the first sum, the only summand which is not zero is when
$(\alpha^{(k_{i})})^{(3)}=1$ and in this case the result is $1\otimes_{E} \rho_{i}\otimes_{E} 1$. 
The second sum is zero by definition of $\eta_{2}$, so we obtain the equality.

Now consider the case $i\geq 3$. If $w=w_{0}\dots w_{i-1}=u_{i-1}\dots u_{0}$ is an $(i-1)$-ambiguity, then
\begin{align*}
\eta_{i}\varphi_{i}(1\otimes_{E}w\otimes_{E}1)&=s_{i-1}\eta_{i-1}\delta_{i-1}t_{i-1}\varphi_{i-1}d_{i-1}(1\otimes_{E}w
\otimes_{E}1).
\end{align*}
The map $t_{*}$ is a contracting homotopy for the $E$-reduced Bar complex, $$\delta_{i-1}t_{i-1}=Id-t_{i-2}\delta_{i-2}$$ and 
the right hand side of the previous equality is
\[s_{i-1}\eta_{i-1}\varphi_{i-1}d_{i-1}(1\otimes_{E}w\otimes_{E}1)-s_{i-1}\eta_{i-1}t_{i-2}\delta_{i-2}\varphi_{i-1}d_{i-1}(1\otimes_{E}w\otimes_{E}1).\]
Since $\varphi$ is a morphism of complexes from $C_{min}(A)$ to $C_{BarE}(A)$ we know that 
$\delta_{i-2}\varphi_{i-1}d_{i-1}=\varphi_{i-2}d_{i-2}d_{i-1}=0$ and this implies that the second term is zero. 
Using the inductive hypothesis in the first term it is sufficient to prove that
\[s_{i-1}d_{i-1}(1\otimes_{E}w\otimes_{E}1)=1\otimes_{E}w\otimes_{E}1.\]
For $i$ odd,
\begin{align*}
s_{i-1}d_{i-1}(1\otimes_{E}w\otimes_{E}1)&=u_{i-1}s_{i-1}(1\otimes_{E}u_{i-2}\dots u_{0}\otimes_{E}1)\\&-s_{i-1}(1\otimes_{E}w_{0}\dots w_{i-2}\otimes_{E}w_{i-1})\\&=1\otimes_{E}w\otimes_{E}1.
\end{align*}
For $i$ even,
\begin{align*}
s_{i-1}d_{i-1}(1\otimes_{E}w\otimes_{E}1)&=s_{i-1}\left(\sum_{w^{(2)}\in V^{(i-2)}}w^{(1)}\otimes_{E}w^{(2)}\otimes_{E}w^{(3)}\right)\\&=1\otimes_{E}w\otimes_{E}1
\end{align*}
and the result is proven.
\end{proof}



\section{Gerstenhaber structure of $\HH^{*}(A)$}
\label{Gerstenhaber}
In this section we fix $\mathds{k}=\mathbb{C}$. We describe the Lie structure of $\HH^{1}(A)$, while in the next sections we will
provide the structure of $\HH^{i}(A)$, with $i\geq 2$, as Lie representations of $\HH^{1}(A)$.
\subsection{Cup product}
The cup product of the Hochschild cohomology of a toupie algebra is trivial, see \cite{GL}. For the convenience of the reader we sketch an alternative proof.
\begin{proposition}
Given a toupie algebra $A$, $\overline{f}\in \HH^{n}(A)$ and $\overline{g}\in \HH^{m}(A)$ with $m,n>0$, the cup product $\overline{f}\smile \overline{g}$ vanishes in $\HH^{m+n}(A)$.
\end{proposition}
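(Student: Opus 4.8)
The plan is to transport the two classes to the $E$-reduced Bar resolution, where the product has the explicit shape $\smile_{red}$ recalled in Section~\ref{Preliminaries}, and then to exploit that everything occurring is forced to begin at the source $0$. Fix cocycle representatives $f$ and $g$ of $\overline f$ and $\overline g$ on the minimal resolution and consider the bar cochains $\eta^* f$ and $\eta^* g$. Since $\eta$ is a morphism of complexes inducing an isomorphism in cohomology which intertwines the two cup products, the class $\overline{\eta^* f\smile_{red}\eta^* g}$ represents $\eta^*(\overline f\smile\overline g)$; hence it suffices to prove that the cochain $\eta^* f\smile_{red}\eta^* g$ is identically zero, for then $\eta^*(\overline f\smile\overline g)=0$ and, $\eta^*$ being injective, $\overline f\smile\overline g=0$.

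The key point I would isolate is that the representatives can be chosen so that $\eta^* f(\overline{a_1}\otimes_E\dots\otimes_E\overline{a_n})\neq 0$ only when the path $a_1\cdots a_n$ starts at $0$. For $n\ge 2$ this is forced by the description of the morphisms $\eta_i$: the explicit formula for $\eta_2$ and Proposition~\ref{prop4.6} send $1\otimes_E a_1\otimes_E\dots\otimes_E a_n\otimes_E 1$ to a combination of terms $a^{(1)}\otimes_E a^{(2)}\otimes_E a^{(3)}$ in which $a^{(2)}$ is a relation or an ambiguity, and applying $f$ --- whose associated map $F(f)$ takes values in $\mathds{k}({}_{0}\mathcal{B}_{\omega})\subseteq e_0Ae_\omega$ --- gives $a^{(1)}\,F(f)(a^{(2)})\,a^{(3)}$, which is nonzero only if $a^{(1)}=e_0$ and $a^{(3)}=e_\omega$; thus $a_1\cdots a_n$ must itself run from $0$ to $\omega$. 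For $n=1$ I would use the distinguished basis $\widehat U-K$ of $\HH^1(A)$ produced in Theorem~\ref{baseH1}: each of its members ($C_1''$, $C_4$, and the surviving $C_2$-type generators) is supported on arrows issued from $0$, the generators acting on interior arrows ($C_3$ and the part $C_1'$ of $C_1$) having been absorbed into $\im D_0$. Consequently, for these representatives $\eta^* f(\overline a)$ vanishes unless $s(a)=0$.

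Granting this, the vanishing is immediate. In
\[
\eta^* f\smile_{red}\eta^* g(\overline{a_1}\otimes_E\dots\otimes_E\overline{a_{n+m}})=\eta^* f(\overline{a_1}\otimes_E\dots\otimes_E\overline{a_n})\,\eta^* g(\overline{a_{n+1}}\otimes_E\dots\otimes_E\overline{a_{n+m}}),
\]
evaluating on a tensor that is nonzero over $E=\mathds{k}Q_0$ one has $t(a_n)=s(a_{n+1})$. A nonzero second factor forces $s(a_{n+1})=0$, whence $t(a_n)=0$; on the other hand a nonzero first factor forces $a_1\cdots a_n$ to be a nontrivial path beginning at $0$, and since $0$ is the unique source of a toupie quiver no nontrivial path can end at it, so $t(a_n)\neq 0$ --- a contradiction. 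Hence at least one of the two factors vanishes for every argument, and $\eta^* f\smile_{red}\eta^* g=0$.

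The main obstacle is the degree-one case: for $n\ge 2$ the conclusion rests only on the structural facts that the cochains land in $e_0Ae_\omega$ and that $e_\omega e_0=0$, whereas an arbitrary $1$-cocycle need not be supported on arrows out of $0$, so the argument genuinely relies on replacing $\overline f,\overline g\in\HH^1(A)$ by the explicit basis of Theorem~\ref{baseH1} and on checking that the generators supported on interior arrows are coboundaries. Once this reduction is in place, what remains is the short idempotent bookkeeping above.
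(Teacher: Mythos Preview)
Your argument is correct and follows the same strategy as the paper's: transport the classes via $\eta^*$ to the $E$-reduced bar resolution, observe that the values of the transported cochains are (linear combinations of) paths of positive length starting at $0$, and conclude that the product of two such factors vanishes because no nontrivial path ends at the source. You are in fact more careful than the paper's sketch in one respect: the paper asserts without further comment that $F(\eta_n(1\otimes a_1\otimes\cdots\otimes a_n\otimes 1))$ is always a path starting at $0$, which is automatic for $n\ge 2$ (the relations and ambiguities with nonzero image must run from $0$ to $\omega$) but not for an arbitrary $1$-cocycle. You close this gap by invoking the explicit basis $\widehat U-K$ of Theorem~\ref{baseH1}, whose members are all supported on first arrows, so that after replacing a degree-one representative by a combination of these basis elements the same ``starts at $0$'' conclusion holds. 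Your handling of the $E$-tensor compatibility ($t(a_n)=s(a_{n+1})$) and the final contradiction is clean and matches the paper's reasoning.
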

\begin{proof}

We will prove that $\overline{f}\smile_{red}\overline{g} =\overline{0}$ in $\HH^{m+n}(A)$. 
Since $\eta^{*}$ is an isomorphism at the cohomology level, there exist $F\in \Hom_{E^{e}}(\mathds{k}\mathcal{A}_{n-1},A)$ and 
$G\in \Hom_{E^{e}}(\mathds{k}\mathcal{A}_{m-1},A)$ such that 
\[\overline{f}=\eta_{n}^{*}\overline{F}=\overline{\eta_{n}^{*}F},\]
\[\overline{g}=\eta_{m}^{*}\overline{G}=\overline{\eta_{m}^{*}G}.\]
We choose $f'$ and $g'$ such that $f'=\eta_{n}^{*}F$ and $g'=\eta_{m}^{*}G$

Let us suppose that $\overline{f'\smile_{red} g'}\neq 0$, that is $f'\smile_{red} g'$ is not a coboundary  and in particular there 
exists $a_{1}\otimes \dots \otimes a_{n+m}\in \overline{A}^{\otimes_{E}(m+n)}$ such that
\[f'\smile_{red} g'(a_{1}\otimes \dots \otimes a_{n+m})=f'(a_{1}\otimes \dots \otimes a_{n})g'(a_{n+1}\otimes \dots \otimes a_{n+m})\neq 0.\]
However, \[f'(a_{1}\otimes \dots \otimes a_{n})=\eta_{n}^{*}(F)(a_{1}\otimes \dots \otimes a_{n})=F(\eta_{n}(1\otimes a_{1}\otimes \dots \otimes a_{n}\otimes 1))\] 
and 
\[g'(a_{n+1}\otimes \dots \otimes a_{n+m})=\eta_{m}^{*}(G)(a_{n+1}\otimes \dots \otimes a_{n+m})=G(\eta_{m}(1\otimes a_{n+1}\otimes \dots \otimes a_{n+m}\otimes 1)),\] 
so $f'(a_{1}\otimes \dots \otimes a_{n})$ and $g'(a_{n+1}\otimes \dots \otimes a_{n+m})$ must be linear combinations of paths of 
positive length that start at $0$, so their cup product will be zero, which leads us to a contradiction. 
In conclusion, $\overline{f}\smile_{red} \overline{g}=\overline{f'\smile_{red} g'}=\overline{0}$.

\end{proof}
\subsection{Gerstenhaber bracket in higher degrees}
Next we will compute the Gerstenhaber bracket of two elements in the higher cohomology spaces of $A$. We will prove that the bracket 
of two cocycles of degree greater than one is always zero due to the particular shape of the quiver.
\begin{proposition}\label{asociadorcero}
Let $A=\mathds{k}Q/I$ be a toupie algebra, $\overline{f}\in \HH^{n}(A)$ and $\overline{g}\in \HH^{m}(A)$ with $m,n>1$.
The Gerstenhaber bracket $[\overline{f}, \overline{g}]_{red}$ vanishes in $\HH^{m+n-1}(A)$.
\end{proposition}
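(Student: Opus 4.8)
The plan is to follow the same strategy that worked for the cup product in the previous proposition, transporting the computation to the minimal resolution via the comparison morphisms $\varphi$ and $\eta$ and exploiting the fact that the quiver is toupie. The Gerstenhaber bracket $[\overline{f},\overline{g}]_{red}$ is defined as $\overline{f}\circ\overline{g} - (-1)^{(n-1)(m-1)}\overline{g}\circ\overline{f}$, where $\circ$ denotes the Gerstenhaber circle product, and $f\circ g$ is an alternating sum over the positions $i$ at which $g$ can be inserted into $f$. Since $\eta^{*}$ is an isomorphism at the cohomology level (Proposition \ref{lema2} guarantees $\eta$ is a left inverse of $\varphi$), I would choose representatives $f' = \eta_{n}^{*}F$ and $g' = \eta_{m}^{*}G$ with $F\in\Hom_{E^{e}}(\mathds{k}\mathcal{A}_{n-1},A)$ and $G\in\Hom_{E^{e}}(\mathds{k}\mathcal{A}_{m-1},A)$, and prove that the circle products $f'\circ g'$ and $g'\circ f'$ both vanish as cochains, not merely up to coboundary.

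First I would analyse a single insertion term $f'(\overline{a_1}\otimes\dots\otimes\overline{a_{i-1}}\otimes g'(\overline{a_i}\otimes\dots\otimes\overline{a_{i+m-1}})\otimes\dots)$. The key structural observation, already used for the cup product, is that any value of $f' = \eta_n^*F$ is a linear combination of paths of positive length starting at the source $0$ (because $\eta_n$ factors through $(n-1)$-ambiguities, which begin at $0$, and $F$ sends such ambiguities to branches in ${}_0\mathcal{B}_\omega$). Likewise $g'(\overline{a_i}\otimes\dots)$ is a linear combination of paths starting at $0$. For the composite $f'\circ g'$ to be nonzero, the output of $g'$ must be substituted into an argument slot of $f'$ and then the whole string must compose to a nonzero path; but $g'$ outputs a branch ending at $\omega$, the unique sink, so any arrow to its right in the $f'$ argument would have source $\omega$, which is impossible unless $g'$ is inserted in the last slot. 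Even then, the element fed to $f'$ would have to be a path starting at $0$, so $g'$'s output (starting at $0$, ending at $\omega$) cannot be a proper middle factor of an $f'$-argument. I would make this dichotomy precise: either the insertion forces a zero path, or the degrees force $g'$ into a position where the source/target incompatibility of the toupie quiver (one source, one sink) kills the term.

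The main obstacle will be bookkeeping the insertion positions carefully, since the circle product sums over all $i$ and each term requires checking compatibility of sources and targets using the $E$-bilinearity $w \| f(w)$ (so $w$ and $f(w)$ share source and target). The cleanest route is to argue that for \emph{every} insertion index $i$, the relevant tensor argument $\overline{a_1}\otimes\dots\otimes g'(\cdots)\otimes\dots\otimes\overline{a_{n+m-1}}$ either vanishes in $\overline{A}^{\otimes_E(n-1)}$ or is mapped to zero by $f'$, because a path starting at $0$ and ending at $\omega$ cannot appear as a non-final tensor factor of a valid composable string of positive-length paths in a quiver with a unique source and a unique sink. I would treat $g'\circ f'$ symmetrically. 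Concluding that both circle products vanish at the cochain level, the bracket $[f',g']_{red} = f'\circ g' \pm g'\circ f' = 0$, hence $[\overline{f},\overline{g}]_{red} = \overline{0}$ in $\HH^{m+n-1}(A)$.
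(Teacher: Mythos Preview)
Your approach coincides with the paper's: choose $f'=\eta_n^*F$ and $g'=\eta_m^*G$, observe that any nonzero value of $g'$ is a linear combination of branches $\alpha^{(j)}\in{}_0\mathcal{B}_\omega$, and conclude that the tensor $a_1\otimes_E\cdots\otimes_E\overline{\alpha^{(j)}}\otimes_E\cdots\otimes_E a_{n+m-1}$ vanishes in $\overline{A}^{\otimes_E n}$ since $n>1$ forces at least one neighbouring tensor factor. One small correction: your treatment of the last-slot case $i=n$ is incomplete---the reason the tensor dies there is that the \emph{left} neighbour $a_{n-1}$ would have to end at the source $0$, which is impossible for a positive-length path (and incidentally, $(n-1)$-ambiguities need not begin at $0$; rather $F$ is automatically zero on those not going from $0$ to $\omega$).
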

\begin{proof}
We want to prove that $[\overline{f},\overline{g}]_{red}=\overline{0}$ in $\HH^{m+n-1}(A)$. As before we choose $f'$ and $g'$ such 
that $f'=\eta_{n}^{*}F$ and $g'=\eta_{m}^{*}G$
with $F$ and $G$ obtained from the minimal resolution.
Let us suppose that there exists $a_{1}\otimes\dots\otimes a_{n+m-1}\in \overline{A}^{\otimes_{E}(m+n-1)}$ such that 
$[f',g']_{red}(a_{1}\otimes\dots\otimes a_{n+m-1})\neq 0$.
Recall that
$[f',g']_{red}=f'\circ g'-(-1)^{(m-1)(n-1)}g'\circ f'$
and that $f'\circ g'(a_{1}\otimes\dots\otimes a_{n+m-1})$ is
\[\sum_{i=1}^{n}(-1)^{(i-1)(m-1)}f'(a_{1}\otimes\cdots a_{i-1}\otimes \overline{g'(a_{i}\otimes\dots \otimes a_{i+m-1})}\otimes a_{i+m}\otimes \dots \otimes a_{n+m-1}).\]
We will verify that each summand is zero. Suppose that $\overline{g'(a_{i}\otimes\dots \otimes a_{i+m-1})}\neq 0$ in $\overline{A}$. 
Since $g'(a_{i}\otimes\dots \otimes a_{i+m-1})=G( \eta_{m}(1\otimes a_{i}\otimes\dots \otimes a_{i+m-1}\otimes 1))$ we know that it 
will be a linear combination of paths from $0$ to $\omega$ that is 
$g'(a_{i}\otimes\dots\otimes a_{i+m-1})=\sum_{\alpha^{(j)}\in {}_{0}\mathcal{B}_{\omega}}\lambda_{j}\alpha^{(j)}$. 
In this case 
$a_{1}\otimes_{E}\dots \otimes_{E}a_{i-1}\otimes_{E}\overline{\alpha^{(j)}}\otimes_{E}a_{i+m}\otimes_{E}\dots \otimes_{E}a_{n+m-1}$ 
vanishes in $\overline{A}^{\otimes_{E}n}$, so $f'\circ g'=0$. 
A similar argument proves that $g'\circ f'=0$ and we conclude that
$[\overline{f},\overline{g}]_{red}=\overline{[f',g']}_{red}=\overline{0}$ in $\HH^{m+n-1}(A)$.
\end{proof}

\subsection{Gerstenhaber bracket in $\HH^{1}(A)$}
From now on we will identify the elements of $\HH^{i}(A)$ with their classes when there is no confusion.
We will use the notation of \cite{St} that we recall.
Given $\alpha\| h\in \Hom_{E^{e}}(\mathds{k}Q_{1}, A)$ and a path $\tilde{a}$ in $A$, we will denote $\tilde{a}^{\alpha\| h}$ the sum 
of all the non zero paths obtained replacing every appearance of $\alpha$ in $\tilde{a}$ by $h$. If the path $\tilde{a}$ does not 
contain the arrow $\alpha$ or if when we replace $\alpha$ in $\tilde{a}$ by $h$ we get $0\in A$, we define 
$\tilde{a}^{\alpha\| h}= 0$. Observe that for toupie algebras any element of type $\tilde{a}^{\alpha\| h}$ will have at most 
one summand.

\begin{lemma}
Given $\alpha\| h\in \Hom_{E^{e}}(\mathds{k}Q_{1}, A)$, $b\in \overline{\mathcal{B}}$ and $f\in \Hom_{E^{e}}(\overline{A}, A)$, the 
following equalities hold:
\begin{enumerate}
\item $\eta_{1}^{\ast}(\alpha\| h)(b)=b^{\alpha\| h}$.
\item $\varphi_{1}^{\ast}(f)=\sum_{\alpha\in Q_{1}}\sum_{\alpha\|c \in Q_{1}\|\mathcal{B}}\lambda_{\alpha, c}\alpha\|c$, if 
$f(\alpha)=\sum_{c\in \mathcal{B}}\lambda_{\alpha, c}c$.
\end{enumerate}
\end{lemma}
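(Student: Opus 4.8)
The plan is to derive both identities directly from the definitions of the induced maps $\eta_{1}^{\ast}=F\circ \Hom_{A^{e}}(\eta_{1},A)\circ F^{-1}$ and $\varphi_{1}^{\ast}=F\circ \Hom_{A^{e}}(\varphi_{1},A)\circ F^{-1}$, combined with the explicit low-degree formulas for $\eta_{1}$ and $\varphi_{1}$ already obtained above. Recall that $F$ is evaluation at $1\otimes_{E}(-)\otimes_{E}1$, so both sides of each identity are determined by their values on such generators. Consequently the argument is a bookkeeping computation rather than a conceptual one; the only thing requiring attention is keeping track of $F$ and of the source and target constraints coming from the $E$-bimodule structure.

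For part $(1)$, I would set $\widetilde{g}=F^{-1}(\alpha\|h)$, the $A$-bimodule morphism determined by $\widetilde{g}(1\otimes_{E}\beta\otimes_{E}1)=h$ if $\beta=\alpha$ and $0$ otherwise. By definition of $F$ we have $\eta_{1}^{\ast}(\alpha\|h)(b)=\widetilde{g}\bigl(\eta_{1}(1\otimes_{E}\overline{b}\otimes_{E}1)\bigr)$. Inserting the formula $\eta_{1}(1\otimes_{E}\overline{b}\otimes_{E}1)=\sum b^{(1)}\otimes_{E}\overleftarrow{b^{(2)}}\otimes_{E}b^{(3)}$, in which the sum is over the factorizations of $b$ with $b^{(2)}$ an arrow, the morphism $\widetilde{g}$ kills every term whose middle factor is not $\alpha$ and replaces the surviving middle factors by $h$. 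This leaves exactly $\sum_{b=b^{(1)}\alpha b^{(3)}}b^{(1)}h\,b^{(3)}$, which is by definition the sum $b^{\alpha\|h}$ of all nonzero paths obtained by substituting $h$ for each occurrence of $\alpha$ in $b$, so the identity holds.

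For part $(2)$, with $\widetilde{f}=F^{-1}(f)$ I would evaluate $\varphi_{1}^{\ast}(f)$ on an arrow $\alpha\in Q_{1}$: again by definition of $F$, $\varphi_{1}^{\ast}(f)(\alpha)=\widetilde{f}\bigl(\varphi_{1}(1\otimes_{E}\alpha\otimes_{E}1)\bigr)$, and since $\varphi_{1}(1\otimes_{E}\alpha\otimes_{E}1)=1\otimes_{E}\overline{\alpha}\otimes_{E}1$ this equals $f(\alpha)=\sum_{c\in\mathcal{B}}\lambda_{\alpha,c}\,c$. Thus $\varphi_{1}^{\ast}(f)$ is the morphism sending each arrow $\alpha$ to $\sum_{c}\lambda_{\alpha,c}\,c$, that is, $\sum_{\alpha\in Q_{1}}\sum_{c}\lambda_{\alpha,c}(\alpha\|c)$. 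Because $f$ is a morphism of $E$-bimodules, $f(\alpha)$ shares source and target with $\alpha$, so $\lambda_{\alpha,c}=0$ unless $s(c)=s(\alpha)$ and $t(c)=t(\alpha)$; hence the inner sum ranges exactly over the admissible pairs $\alpha\|c\in Q_{1}\|\mathcal{B}$, which is the asserted formula. The main point to be careful with, in both parts, is precisely this compatibility of source and target, together with the observation (already noted) that for a toupie algebra each $b^{\alpha\|h}$ has at most one summand.
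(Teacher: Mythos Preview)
Your proof is correct and follows essentially the same approach as the paper: in both parts you unwind the definitions of $\eta_{1}^{\ast}$ and $\varphi_{1}^{\ast}$ via the identification $F$, plug in the explicit low-degree formulas for $\eta_{1}$ and $\varphi_{1}$, and read off the result. Your additional remark about the $E$-bimodule constraint forcing $\lambda_{\alpha,c}=0$ unless source and target match is a nice clarification that the paper leaves implicit.
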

\begin{proof}

$\emph{(1)}$ Given $\alpha\|h \in \Hom_{E^{e}}(\mathds{k}Q_{1}, A)$, we will denote by $\widetilde{(\alpha\|h)}$ the element in 
$\Hom_{A^{e}}(A\otimes_{E}\mathds{k}Q_{1}\otimes_{E}A, A)$ obtained from $\alpha\|h$ using the canonical identification of 
$\Hom_{E^{e}}(\mathds{k}Q_{1}, A)$ with $\Hom_{A^{e}}(A\otimes_{E}\mathds{k}Q_{1}\otimes_{E}A, A)$. 
For $b\in \overline{\mathcal{B}}$,
\[\eta_{1}^{\ast}(\alpha\|h)(b)=\widetilde{(\alpha\|h)}\eta_{1}(1\otimes_{E} b\otimes_{E} 1)=\widetilde{(\alpha\|h)}\sum b^{(1)}\otimes_{E} \overleftarrow{b^{(2)}}\otimes_{E} b^{(3)}=\sum b^{(1)}(\alpha\|h)(\overleftarrow{b^{(2)}})b^{(3)}.\]
\begin{itemize}
\item If $\alpha$ is an arrow contained in $b$, that is $b=\alpha^{l}\alpha \alpha^{r}$, then $\eta_{1}^{\ast}(\alpha\|h)(b)=\alpha^{l}h \alpha^{r}$.
\item If not, then $\eta_{1}^{\ast}(\alpha\|h)(b)=0$. 

The first equality is proven.

\end{itemize}

$\emph{(2)}$ Given $f\in \Hom_{E^{e}}(\overline{A}, A)$ and $\alpha\in Q_{1}$, we will denote by $\tilde{f}$ the element in 
$\Hom_{A^{e}}(A\otimes_{E}\overline{A}\otimes_{E}A, A)$ associated to $f$ using the canonical identification of 
$\Hom_{E^{e}}(\overline{A}, A)$ with $\Hom_{A^{e}}(A\otimes_{E}\overline{A}\otimes_{E}A, A)$. Now, 
\[\varphi_{1}^{*}(f)(\alpha)=\tilde{f}(\varphi_{1}(1\otimes_{E}\alpha\otimes_{E}1))
=\tilde{f}(1\otimes_{E}\alpha\otimes_{E}1)=f(\alpha).\]
If $f(\alpha)=\sum_{c\in \mathcal{B}}\lambda_{\alpha, c}c$, then
we conclude that $\varphi_{1}^{\ast}(f)=\sum_{\alpha\in Q_{1}}\sum_{\alpha\|c \in Q_{1}\|\mathcal{B}}\lambda_{\alpha, c}\alpha\|c$.
\end{proof}
The next theorem gives the formula of the Gerstenhaber bracket restricted to $\HH^{1}(A)$. The corresponding result for monomial 
algebras has been proven in \cite{St}. Even if the elements of $\HH^{1}(A)$ here differ from those studied by Strametz, the formula 
for the bracket can be written in a similar way.
\begin{theorem}\label{GerH1}
Let $A$ be a toupie algebra.
The Gerstenhaber bracket in $\HH^{1}(A)$ can be expressed in terms of the minimal resolution as:
\[[\alpha\|h, \beta\| b]=\beta\| b^{\alpha\|h}-\alpha\|h^{\beta\| b},\]
with $\alpha\|h, \beta\|b\in \Hom_{E^{e}}(\mathds{k}Q_{1}, A)$.
\end{theorem}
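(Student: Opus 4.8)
The plan is to carry the computation onto the $E$-reduced bar resolution, where the reduced Gerstenhaber bracket $[\ ,\ ]_{red}$ has an explicit formula, and then transport the answer back to the minimal resolution through the comparison morphisms. The legitimacy of this transport rests on Proposition \ref{lema2}: since $\eta\circ\varphi=\mathrm{id}$ on $C_{min}(A)$, applying $\Hom_{A^e}(-,A)$ gives $\varphi_1^{*}\circ\eta_1^{*}=\mathrm{id}$, so $\eta_1^{*}$ and $\varphi_1^{*}$ are mutually inverse isomorphisms between $\HH^1(A)$ computed from the minimal resolution and from the bar resolution. Because $[\ ,\ ]_{red}$ computes the genuine Gerstenhaber bracket (Appendix B of \cite{San}), I would work from the identity
\[[\alpha\|h,\beta\|b]=\varphi_1^{*}\bigl([\,\eta_1^{*}(\alpha\|h),\,\eta_1^{*}(\beta\|b)\,]_{red}\bigr),\]
so that the whole problem reduces to evaluating the right-hand side.

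For the evaluation I would set $F=\eta_1^{*}(\alpha\|h)$ and $G=\eta_1^{*}(\beta\|b)$; by part $(1)$ of the lemma preceding the statement these are the substitution maps $F(b')=(b')^{\alpha\|h}$ and $G(b')=(b')^{\beta\|b}$ for $b'\in\overline{\mathcal{B}}$. In cohomological degree $1$ the reduced bracket specialises to
\[[F,G]_{red}(\overline{a})=F\bigl(\overline{G(\overline{a})}\bigr)-G\bigl(\overline{F(\overline{a})}\bigr),\]
and inserting the two formulas yields $[F,G]_{red}(\overline{a})=(a^{\beta\|b})^{\alpha\|h}-(a^{\alpha\|h})^{\beta\|b}$. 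At this point I would note that $h$ and $b$ are paths of positive length, so each substitution produces only combinations of positive-length paths; hence passing to classes modulo $E$ loses nothing, and the observation preceding the theorem (each $\tilde a^{\,\alpha\|h}$ has at most one summand for a toupie algebra) keeps the bookkeeping finite.

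Finally I would apply $\varphi_1^{*}$ via part $(2)$ of the same lemma, which records a cochain by its values on arrows, $\varphi_1^{*}(\psi)=\sum_{\gamma\in Q_1}\gamma\|\psi(\gamma)$. Evaluating on an arrow $\gamma$ and using $\gamma^{\beta\|b}=\delta_{\gamma\beta}\,b$ and $\gamma^{\alpha\|h}=\delta_{\gamma\alpha}\,h$ gives $[F,G]_{red}(\gamma)=\delta_{\gamma\beta}\,b^{\alpha\|h}-\delta_{\gamma\alpha}\,h^{\beta\|b}$, whence
\[\varphi_1^{*}\bigl([F,G]_{red}\bigr)=\beta\|b^{\alpha\|h}-\alpha\|h^{\beta\|b},\]
which is the claimed formula. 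I do not expect a single hard estimate to be the obstruction; rather the delicate points are the two structural facts that justify the transport — that $[\ ,\ ]_{red}$ really computes the Gerstenhaber bracket in this normalised setting and that $\varphi_1^{*}\eta_1^{*}=\mathrm{id}$ — together with the careful check that the iterated substitutions produce no spurious idempotent or cancelling terms once one reduces modulo $E$ and modulo $I$. It is precisely the toupie shape of $Q$, which forces each arrow to occur at most once in any path, that controls this last point.
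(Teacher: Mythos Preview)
Your proposal is correct and follows essentially the same route as the paper: transport to the $E$-reduced bar resolution via $\eta_1^{*}$, compute the reduced bracket as an iterated substitution $(\gamma^{\beta\|b})^{\alpha\|h}-(\gamma^{\alpha\|h})^{\beta\|b}$, and pull back through $\varphi_1^{*}$ by evaluating on arrows. The paper's proof is organised identically, with the same case split on $\gamma=\alpha$, $\gamma=\beta$, or neither; your explicit invocation of Proposition~\ref{lema2} to justify $\varphi_1^{*}\eta_1^{*}=\mathrm{id}$ at the cochain level is a point the paper leaves implicit.
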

\begin{proof}
Given $\alpha\|h, \beta\|b\in \Hom_{E^{e}}(\mathds{k}Q_{1}, A)$, let us compute 
$[\alpha\|h, \beta\| b]\in \Hom_{E^{e}}(\mathds{k}Q_{1}, A)$ using the comparison morphisms.
Given $\gamma\in Q_{1}$,
\begin{align*}
[\alpha\|h, \beta\| b](\gamma)&=\varphi^{\ast}_{1}[\eta_{1}^{\ast}(\alpha\|h), \eta_{1}^{\ast}(\beta\|b)](\gamma)\\
&=[\eta_{1}^{\ast}(\alpha\|h), \eta_{1}^{\ast}(\beta\|b)]\varphi_{1}(1\otimes_{E}\gamma\otimes_{E}1)\\&=[\eta_{1}^{\ast}(\alpha\|h), \eta_{1}^{\ast}(\beta\|b)](\gamma).
\end{align*}

Applying now the definition of the Gerstenhaber bracket, we get:
\begin{align*}
[\eta_{1}^{\ast}(\alpha\|h), \eta_{1}^{\ast}(\beta\|b)](\gamma)&=\eta_{1}^{\ast}(\alpha\|h)\circ \eta_{1}^{\ast}(\beta\|b)(\gamma)-\eta_{1}^{\ast}(\beta\|b)\circ \eta_{1}^{\ast}(\alpha\|h)(\gamma)\\&=(\gamma^{\beta\|b})^{\alpha\|h}-(\gamma^{\alpha\|h})^{\beta\|b}.
\end{align*}
There are three cases to consider.
\begin{itemize}
\item If $\gamma=\beta$, then $\gamma^{\beta\|b}=b$, and if also $b$ contains $\alpha$, then we replace $\alpha$ by $h$ in $b$.
\item If $\gamma=\alpha$, then $\gamma^{\alpha\|h}=h$, and if also $h$ contains $\beta$, then we replace $\beta$ by $b$ in $h$.
\item If $\gamma\neq \alpha$ and $\gamma\neq\beta$, then $[\alpha\|h, \beta\| b](\gamma)=0.$
\end{itemize}
\vspace{1em}
In conclusion, $[\alpha\|h, \beta\| b]=\beta\| b^{\alpha\|h}-\alpha\|h^{\beta\| b}$.

The computation we have just made in terms of the complex induces the formula of the Gerstenhaber bracket in $\HH^{1}(A)$.
\end{proof}
\begin{remark}\label{dacero}
If $\alpha\|\alpha$ and $\beta\|\beta$ belong to $\HH^{1}(A)$, then $[\alpha\|\alpha, \beta\|\beta]=0$.
\end{remark}

\section{Decomposition of $\HH^{1}(A)$ as a Lie algebra}
\label{Lie}

In this section we will give a description of $\HH^{1}(A)$ as a Lie algebra. We will first find necessary and sufficient 
conditions for $A$ to be abelian and next we will describe in detail the centre of $\HH^{1}(A)$.
 
We will use the following notation for the explicit basis of $\HH^{1}(A)$ computed in Subsection \ref{caluloH1}:

\begin{itemize}
\item $y_{i}=\alpha^{i}_{0}\| \alpha^{i}_{0} ~~ with  ~~\alpha^{(i)}~~\text{a branch containing monomial relations}$.
\item $w_{pq}=\alpha^{(p)}\|\alpha^{(q)}$ for $p\neq q$ and $p,q=1\dots ,a$.
\item $z_{us}=\alpha^{(u)}\|\alpha^{(s)}$ for $u=1,\dots ,a$ and $\alpha^{(s)}\in  {}_{0}\mathcal{B}_{\omega}-Z$.
\item $x_{j}=\alpha^{(j)}\|\alpha^{(j)}-\alpha^{(1)}\|\alpha^{(1)}$ for $j=2\dots ,a$.
\item $t_{k}=\sum_{\alpha^{(i)} \in Q_{\rho}^{k} } \alpha^{i}_{0}\| \alpha^{i}_{0}$ for $k=1,\dots ,r$.
\end{itemize}

Recall that $C''_{1}=\{y_{i}:\alpha^{(i)}\text{ is a branch with monomial relations} \}$.\\

Using Theorem \ref{GerH1} we compute the Gerstenhaber brackets of the elements of the basis of $\HH^{1}(A)$ and obtain the 
following table:

\begin{table}[htbp]
  \centering
  \begin{tabular}{@{} c|ccccc @{}}
   & $y_{i'}$ &$x_{j'}$ & $w_{p'q'}$ & $z_{u's'}$ & $t_{k'}$   \\ 
    \hline
    $y_{i}$ & 0 & 0 & 0 & 0 & 0 \\ 
    $x_{j}$ &  & 0 & $A_{j}^{p'q'}$ & $B_{j}^{u's'}$ & 0 \\ 
    $w_{pq}$ &  &  & $E_{pq}^{p'q'}$ & $D_{pq}^{u's'}$ & 0 \\ 
    $z_{us}$ &  & &  & 0 & $-C_{k'}^{us}$ \\ 
    $t_{k}$ &  &  &  &  & 0 \\ 
 
    \hline
  \end{tabular}
  \caption{Lie bracket table}
  \label{tab:label}
\end{table}

where
\begin{itemize}
 \item $A_{j}^{p'q'}=[x_{j}, w_{p'q'}]=\delta_{j,q'}w_{p'q'}-\delta_{j,p'}w_{p'q'}-\delta_{q',1}w_{p'1}+\delta_{1,p'}w_{1q'},$
\item $B_{j}^{u's'}=[x_{j},z_{u's'}]=-\delta_{j,u'}z_{u's'}+\delta_{u',1}z_{1s'},$
\item $C_{k}^{u's'}=[t_{k},z_{u's'}]= \left\lbrace
  \begin{array}{cc}

z_{u's'} & \mbox{ if } \alpha^{(s')}\in Q_{\rho}^{k}\\ 0, & \mbox{ otherwise,}
\end{array}\right. $

\item $D_{pq}^{u's'}=[w_{pq},z_{u's'}]=-\delta_{q,u'}z_{ps'}$
and

\item $E_{pq}^{p'q'}=[w_{pq}, w_{p'q'}]=\left\lbrace
  \begin{array}{cc}
     \delta_{p,q'}w_{p'q}-\delta_{q,p'}w_{pq'}&\hbox{ if } q\neq p'
     \hbox{ or } p\neq q' \\
   \delta_{p,q'}\alpha^{(p')}\|\alpha^{(p')}-\delta_{q,p'}\alpha^{(q')}\|\alpha^{(q')} &\hbox{ if } q=p' \hbox{ and } p=q'.
  \end{array}
\right.$

\noindent In the last case, where $q=p'$ and $p=q'$, we can distinguish three different cases:

$E_{pp'}^{p'p}
=\left\lbrace
\begin{array}{ccc}
x_{p'}& \hbox{ if } p=1\\
-x_{p} & \hbox{ if } p'=1\\
x_{p}-x_{p'} & \hbox{ otherwise. }
\end{array}\right. $

\end{itemize}

\medskip

\begin{proposition}\label{a=0od>1}
Let $A$ be a toupie algebra. The Lie algebra $\HH^{1}(A)$ is abelian if and only if $a=0$ or $D\leq 1$. Moreover, if $a=0$, then the 
enveloping algebra $\mathcal{U}(\HH^{1}(A))$, is isomorphic to the polynomial algebra $\mathds{k}[x_{1},\cdots ,x_{r+m-1}]$. 
In case $a=D=1$, we have that $\mathcal{U}(\HH^{1}(A))$ is isomorphic to $\mathds{k}[y_{1},\cdots ,y_{m}]$.
\end{proposition}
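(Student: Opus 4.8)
The plan is to split the statement into the abelian characterisation and the two computations of $\mathcal{U}(\HH^1(A))$, and to reduce the latter to the former. For an abelian Lie algebra $\mathfrak g$ of dimension $d$, the Poincaré--Birkhoff--Witt theorem identifies $\mathcal{U}(\mathfrak g)$ with the symmetric algebra $S(\mathfrak g)\cong\mathds{k}[X_1,\dots,X_d]$. Since $\dim_{\mathds{k}}\HH^1(A)=r+m+Da-1$ by Remark \ref{dimH1}, once abelian-ness is established the case $a=0$ gives $d=r+m-1$ and the case $a=D=1$ gives $d=m$ (after checking $r=0$), which produces exactly the claimed polynomial rings.

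I would then isolate a structural fact used below: $D\geq a$, and $D=a$ forces $l=n=r=0$. Writing $C$ for the reduced coefficient matrix of the non-monomial relations, one has $D-a=l+(n-\operatorname{rank}C)\geq 0$. Moreover, if $n\geq 1$ then $\operatorname{rank}C<n$: otherwise $C$ would have a pivot in each of its $n$ columns, every $\rho_i$ would reduce to a single branch, that is, to a monomial relation, contradicting $\rho_i\in\mathcal{R}_{nomon}$. Hence $n\geq 1$ implies $D-a\geq 1$, so $D=a$ forces $n=0$ and $l=0$; as the vertices of $Q_\rho$ are precisely the $l+n$ branches that are neither arrows nor contain monomial relations, $Q_\rho$ is empty and $r=0$.

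For the characterisation I would argue directly from the bracket table. If $a=0$ there are no $w_{pq}$, $z_{us}$ or $x_j$ in the basis, leaving only the mutually commuting $y_i$ and $t_k$; if $D\leq 1$ then $a\leq D\leq 1$, and the only new case $a=D=1$ has ${}_0\mathcal{B}_\omega=Z$, so again no $w_{pq}$, $z_{us}$ or $x_j$ occur and only the $y_i$ remain, hence $\HH^1(A)$ is abelian in both cases. Conversely, I exhibit a nonzero bracket otherwise. If $a\geq 2$, then $[w_{12},w_{21}]=E_{12}^{21}=x_2\neq 0$. If $a=1$ and $D\geq 2$, then ${}_0\mathcal{B}_\omega-Z$ contains a surviving branch $\alpha^{(s)}$ of length $\geq 2$; since any branch carrying a monomial relation is zero in $A$, this $\alpha^{(s)}$ is a vertex of $Q_\rho$, lying in some component $Q_\rho^k$, whence $[z_{1s},t_k]=-C_k^{1s}=-z_{1s}\neq 0$. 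As $D\geq a$, these two cases are exactly the complement of ``$a=0$ or $D\leq 1$'', giving the equivalence.

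Finally I would read off the enveloping algebras. In the abelian case $\mathcal{U}(\HH^1(A))$ is the polynomial ring on any basis: for $a=0$ this basis has $r+m-1$ elements, so $\mathcal{U}(\HH^1(A))\cong\mathds{k}[x_1,\dots,x_{r+m-1}]$, and for $a=D=1$ the structural lemma gives $r=0$, so the basis is $\{y_1,\dots,y_m\}$ and $\mathcal{U}(\HH^1(A))\cong\mathds{k}[y_1,\dots,y_m]$. I expect the main obstacle to be precisely the structural lemma $D=a\Rightarrow r=0$: it is what makes the $a=D=1$ count equal $m$ rather than $m+r$, and it rests on the non-obvious point that a genuinely non-monomial relation can never reduce its branches to dimension zero.
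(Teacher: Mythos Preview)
Your proof is correct and follows essentially the same route as the paper: the same bracket-table witnesses for non-abelianness and the same PBW/dimension count for the enveloping algebras. Two minor differences are worth noting. First, your case split for the converse ($a\geq 2$ versus $a=1,\,D\geq 2$) is cosmetically different from the paper's ($D=a$ versus $D>a>0$) but covers the same ground. Second, you explicitly prove the structural fact $D\geq a$ and $D=a\Rightarrow l=n=r=0$, whereas the paper simply asserts $r=0$ when $a=D=1$; your justification via the reduced row echelon form of $C$ is a genuine improvement in rigor. One small caveat: the explicit basis $\{y_i,x_j,w_{pq},z_{us},t_k\}$ and its bracket table in Section~\ref{Lie} are stated under the standing assumption $a>0$, so for the case $a=0$ it is cleaner (as the paper does) to argue directly that every generator of $\Hom_{E^e}(\mathds{k}Q_1,A)$ is of the form $\alpha\|\alpha$ and invoke Remark~\ref{dacero}, rather than appealing to the table.
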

\begin{proof}
For the first statement, recall from Section \ref{calculocohomo} that:
\[\Hom_{E^{e}}(\mathds{k}Q_{1},A)=\mathds{k}(Q_{1}-Z\| Q_{1}-Z)+ \mathds{k}(Z\| {}_{0}\mathcal{B}_{\omega}).\]
If $a=0$, then $Z$ is empty and $\Hom_{E^{e}}(\mathds{k}Q_{1},A)$ is generated by the elements of the form $\alpha\|\alpha$ for 
some $\alpha\in Q_{1}$. Using Remark \ref{dacero}, we conclude that $\HH^{1}(A)$ is abelian.\\
If $D\leq 1$, we distinguish two different cases:
\begin{itemize}
\item if $a=0$, we argue as before,
\item if $a=1$, then $Z=\{\alpha^{(1)}\}$ and $\Hom_{E^{e}}(\mathds{k}Q_{1},A)$ is generated by the elements of the form 
$\alpha\|\alpha$ with $\alpha\in Q_{1}-Z$ together with $\{\alpha^{(1)}\|\alpha^{(1)}\}$. Using again Remark \ref{dacero} we conclude 
that $\HH^{1}(A)$ is abelian.
\end{itemize}
For the converse, suppose that $a>0$ and $D>1$.
Again, we consider two different cases:
\begin{enumerate}
\item If $D=a$, then there are at least two elements in $Z$ that we will call $\alpha^{(1)}$ and $\alpha^{(2)}$ and the Lie bracket table gives:
\[[w_{21}, w_{12}]=-x_{2}\neq 0.\]
\item If $D> a>0$, then there exists at least one element in $Z$ that we will call $\alpha^{(1)}$ and one element in 
$_{0}\mathcal{B}_{\omega}-Z$ that we will call $\alpha^{(p)}$ which belongs to the connected component $Q_{\rho}^{k}$ for some $k$. 
Consider the elements 
$z_{1p}$ and $t_{k}$. Using again the computations in the Lie bracket table:
\[[z_{1p}, t_{k}]=-z_{1p}\neq 0.\]
\end{enumerate}
If $a=0$, Remark \ref{dimH1} implies that $dim_{\mathds{k}}\HH^{1}(A)=r+m-1$ and since $\HH^{1}(A)$ is abelian, 
the enveloping algebra $\mathcal{U}(\HH^{1}(A))$ is isomorphic to $\mathds{k}[x_{1},\cdots ,x_{r+m-1}]$. If $a=D=1$, then $r=0$ and 
$Da=1$ so again by Remark \ref{dimH1},  $dim_{\mathds{k}}\HH^{1}(A)=m$ and 
$\mathcal{U}(\HH^{1}(A))\simeq \mathds{k}[y_{1},\cdots ,y_{m}]$.
\end{proof}

From now on we will suppose that $a>0$ and $D>1$.

We will next describe the centre of the Lie algebra $\HH^{1}(A)$.
\begin{proposition}\label{centro}
Let $A$ be a toupie algebra
with $a>0$. 
The centre of the Lie algebra $\HH^{1}(A)$ is the $\mathds{k}$-vector space spanned by $C''_{1}=\{y_{i}:i=1,\dots ,m\}$.

\end{proposition}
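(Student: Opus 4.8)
The plan is to prove both inclusions, the easy one being $\langle C_1''\rangle \subseteq Z(\HH^1(A))$: the first row of Table \ref{tab:label} shows that $[y_i,v]=0$ for every basis vector $v$, so each $y_i$ is central. For the reverse inclusion I would exploit the block structure recorded in the table. Set $\mathfrak{s}=\langle x_j,w_{pq}\rangle$, $\mathfrak{z}=\langle z_{us}\rangle$ and $\mathfrak{t}=\langle t_k\rangle$. Reading off the table, $[\mathfrak{s},\mathfrak{s}]\subseteq\mathfrak{s}$, $[\mathfrak{s},\mathfrak{z}]\subseteq\mathfrak{z}$, $[\mathfrak{s},\mathfrak{t}]=0$, $[\mathfrak{z},\mathfrak{z}]=0$, $[\mathfrak{z},\mathfrak{t}]\subseteq\mathfrak{z}$ and $[\mathfrak{t},\mathfrak{t}]=0$, while $\langle C_1''\rangle$ is central. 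Thus $\HH^1(A)=\langle C_1''\rangle\oplus\mathfrak{s}\oplus\mathfrak{z}\oplus\mathfrak{t}$, the three summands $\mathfrak{s},\mathfrak{z},\mathfrak{t}$ involve pairwise disjoint basis vectors, and every bracket lands in $\mathfrak{s}\oplus\mathfrak{z}\oplus\mathfrak{t}$. Since the difference of a central element and its (central) $y$-component is again central, it suffices to take a central $\xi=\xi_{\mathfrak s}+\xi_{\mathfrak z}+\xi_{\mathfrak t}$ with no $y$-component and show each piece vanishes.

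First I would kill $\xi_{\mathfrak s}$. For $v\in\mathfrak{s}$ the $\mathfrak{s}$-component of $[\xi,v]$ is exactly $[\xi_{\mathfrak s},v]$, since the contributions of $\xi_{\mathfrak z}$ and $\xi_{\mathfrak t}$ land in $\mathfrak{z}$ respectively vanish; centrality therefore forces $[\xi_{\mathfrak s},v]=0$ for all $v\in\mathfrak{s}$. Via $w_{pq}\leftrightarrow E_{qp}$ and $x_j\leftrightarrow E_{jj}-E_{11}$, the entries $E_{pq}^{p'q'}$ and $A_j^{p'q'}$ of the table identify $\mathfrak{s}$ with $\mathfrak{sl}_a(\mathbb{C})$. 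As $\mathfrak{sl}_a(\mathbb{C})$ is centreless for $a\geq 2$, this gives $\xi_{\mathfrak s}=0$; for $a=1$ there are no $x_j$ or $w_{pq}$ and $\xi_{\mathfrak s}=0$ trivially.

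Next I would kill $\xi_{\mathfrak z}=\sum d_{us}z_{us}$ by pairing with $\mathfrak{t}$. From the table $[z_{us},t_k]=-C_k^{us}$, which is $-z_{us}$ when $\alpha^{(s)}\in Q_\rho^k$ and $0$ otherwise, so $[\xi_{\mathfrak z},t_k]=-\sum_{\alpha^{(s)}\in Q_\rho^k}d_{us}z_{us}$ and centrality gives $d_{us}=0$ whenever $\alpha^{(s)}\in Q_\rho^k$. Every index $s$ occurring in $\mathfrak{z}$ is a non-arrow branch lying in $\mathcal{B}$, hence irreducible, hence a vertex of $Q_\rho$, so it belongs to some component $Q_\rho^k$ and is detected by the corresponding $t_k$; ranging over all $k$ yields $\xi_{\mathfrak z}=0$. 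Finally, for $\xi_{\mathfrak t}=\sum e_k t_k$, having already $\xi_{\mathfrak s}=\xi_{\mathfrak z}=0$, centrality gives $0=[\xi,z_{1s'}]=[\xi_{\mathfrak t},z_{1s'}]=\sum_k e_k C_k^{1s'}=e_{k(s')}z_{1s'}$, where $k(s')$ is the component of $\alpha^{(s')}$ and where $a>0$ guarantees that $z_{1s'}$ is an actual basis element. Choosing in each component $Q_\rho^k$ a branch $\alpha^{(s')}$ that survives in $\mathcal{B}$ then forces $e_k=0$, so $\xi_{\mathfrak t}=0$ and $\xi\in\langle C_1''\rangle$.

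The bracket evaluations themselves are routine, being already recorded in Table \ref{tab:label}. The genuine point to get right, and the main obstacle, is the combinatorial bookkeeping that makes the two detection arguments close: one must check that the branch-indices $s$ occurring in the $z_{us}$ are exactly the vertices of $Q_\rho$ (so that every $z_{us}$ is seen by some $t_k$), and that conversely each connected component $Q_\rho^k$ contains at least one branch surviving in $\mathcal{B}$ (so that every $t_k$ is seen by some $z_{1s'}$, using $a>0$). For the latter one uses that an isolated vertex of $Q_\rho$ is itself an irreducible branch, while in a component carrying non-monomial relations the maximal-index branch is a non-tip and hence lies in $\mathcal{B}$. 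Verifying this interplay between ${}_0\mathcal{B}_\omega-Z$ and the components of $Q_\rho$ is where the argument has real content.
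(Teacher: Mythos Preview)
Your proof is correct and follows a route close to the paper's, but with one genuine difference worth noting. The paper also writes a general central element in the basis and brackets it successively with $t_k$, $x_j$, $w_{pq}$, $z_{us}$ to kill the $C$-, $B$-, $A$-, $D$-coefficients in turn, by direct linear-independence arguments. Your treatment of the $\mathfrak z$- and $\mathfrak t$-parts is essentially identical to the paper's first and last steps. Where you diverge is in handling the $\mathfrak s$-part: instead of the paper's explicit coefficient chase with $x_j$ and $w_{pq}$, you invoke the identification $\mathfrak s\cong\mathfrak{sl}_a(\mathbb C)$ and its centrelessness. This is cleaner, but note that in the paper this isomorphism is only established \emph{after} the present proposition (Propositions~\ref{sl} and~\ref{corosl}); since it can be read off directly from the table entries $A_j^{p'q'}$ and $E_{pq}^{p'q'}$, no circularity arises, but you are tacitly using a forward reference. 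Your explicit verification that every $s$ indexing some $z_{us}$ is a vertex of $Q_\rho$, and that every component $Q_\rho^k$ contains a branch surviving in $\mathcal B$, is a useful addition: the paper's proof uses both facts implicitly (and also the standing hypothesis $D>1$) without spelling them out.
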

\begin{proof}
Using the Lie bracket table we check that $C''_{1}\subset \mathcal{Z}(\HH^{1}(A))$.
Next  we will prove that every element of the centre of the algebra can be written as a linear combination of elements 
of $C''_{1}$.

Let $\Gamma$ be an element in $\mathcal{Z}(\HH^{1}(A))$. Write $\Gamma$ in terms of the given basis of $\HH^{1}(A)$ as follows:
\[\Gamma=\sum_{j}A_{j}x_{j}+\sum_{p,q}B_{p,q}w_{pq}+\sum_{u,s}C_{u,s}z_{us}+\sum_{k}D_{k}t_{k}+\sum_{i}E_{i}y_{i}.\]
Since the bracket of $\Gamma$ with any element of $C''_{1}$ is $0$, we determine the restrictions 
that this condition imposes on the coefficients of $\Gamma$.
For every $k$,
\[0=[\Gamma, t_{k}]=[\sum_{u,s} C_{u,s}z_{us}, t_{k}]=\sum_{u,s} C_{u,s}[z_{us}, t_{k}]=-\sum_{u,s} C_{u,s}z_{us}.\]
The $z_{us}$'s are linearly independent, so $C_{u,s}$ vanishes for every $s$ and for every $u=1,\dots,a$.\bigbreak
We also know that for every $j=2,\dots,a$:
\[0=[\Gamma, x_{j}]=[\Gamma, \alpha^{(j)}\| \alpha^{(j)}]-[\Gamma, \alpha^{(1)}\|\alpha^{(1)}]=
[\sum B_{p,q}w_{pq}, \alpha^{(j)}\| \alpha^{(j)}]-[\sum B_{p,q}w_{pq}, \alpha^{(1)}\| \alpha^{(1)}],\] 
using Remark \ref{dacero} and the fact that the coefficients $C_{r,s}$ vanish.
Now:
\begin{align*}
[\Gamma, x_{j}]&=-\sum_{p}(B_{p,j}w_{pj}-B_{p,1}w_{p1})+\sum_{q}(B_{j,q}w_{jq}-B_{1,q}w_{1q})\\&=
-2B_{1,j}w_{1j}+2B_{j,1}w_{j1}-\sum_{p\neq 1}B_{p,j}w_{pj}\\&~~~~+\sum_{q\neq 1}B_{j,q}w_{jq}+\sum_{p\neq j}B_{p,1}w_{p1}-\sum_{q\neq j}B_{1,q}w_{1q}.
\end{align*}
Since the $w_{pq}$'s are linearly independent, we conclude that $B_{p,q}=0$ for every $p\neq q$ and $p,q=1,\dots, a$. 

We will now see that $A_{j}=0$ for $j=2,\dots,a$.
Choosing $w_{pq}$ with $p\neq q$ and $1\leq p,q\leq a$, it turns out that
\[0=[\Gamma, w_{pq}]=\sum_{j} A_{j}[x_{j},w_{pq}].\]
If $p,q\neq 1$ then,
\[0=[\Gamma, w_{pq}]=(A_{q}-A_{p})w_{pq}\]
so we conclude that $A_{p}=A_{q}$. If $p=1$, then:
\begin{align*}
0&=[\Gamma, w_{1q}]=\sum_{j} A_{j}[\alpha^{(j)}\|\alpha^{(j)},w_{1q}]-\sum_{j} A_{j}[\alpha^{(1)}\|\alpha^{(1)},w_{1q}]\\&=A_{q}w_{1q}+\sum_{j} A_{j}w_{1q}=aA_{q}w_{1q},
\end{align*}
which implies $A_{q}=0$ for $q=2,\dots, a$.

Finally, given $u$ and $s$ such that $1\leq u\leq a$ and $\alpha^{(s)}\in {}_{0}\mathcal{B}_{\omega}-Z$ we have:
\[0=[\Gamma, z_{us}]=\sum_{k} D_{k}[t_{k},z_{us}]=\sum_{k} D_{k}[t_{k}, \alpha^{(u)}\|\alpha^{(s)}]=D_{l}z_{us}\]
if $\alpha^{(s)}$ belongs to $Q_{\rho}^{l}$.
As a consequence, $D_{k}=0$ for every $k$ and we conclude that 
\[\Gamma=\sum_{i} E_{i}y_{i}.\] 
\end{proof}
Consider the $a$-Kronecker quiver $Q_{a}$: it has only two vertices, one source and one sink, with $a$ arrows from the 
source to the sink. In the next proposition we will prove that if the toupie quiver $Q_{A}$ associated to the toupie algebra $A$ has 
$Q_{a}$ as a subquiver, then the Lie algebra $\HH^{1}(A)$ contains a subalgebra isomorphic to $\HH^{1}( \mathds{k}Q_{a})$.

\begin{proposition}\label{sl}
The Lie subalgebra of $\HH^{1}(A)$ generated by
\[\{x_{j}, w_{pq}~~with~~p\neq q;~~p,q=1,\dots, a~~and
~~j=2,\dots, a\}\]
is isomorphic to $\HH^{1}( \mathds{k}Q_{a})$.
\end{proposition}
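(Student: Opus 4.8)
The plan is to realise $\HH^1(\mathds{k}Q_a)$ as literally the span $V=\langle x_j, w_{pq} : j=2,\dots,a,\ p\neq q\rangle$ inside $\HH^1(A)$, via the obvious index-preserving correspondence. First I would compute $\HH^1(\mathds{k}Q_a)$ using the general description of Theorem \ref{baseH1}: since $Q_a$ has all branches of length one, no monomial and no non-monomial relations, we have $l=m=n=r=0$ and $D=a$, so there are no basis elements of type $y_i$, $z_{us}$ or $t_k$, and the basis of $\HH^1(\mathds{k}Q_a)$ is exactly $\{x_j : j=2,\dots,a\}\cup\{w_{pq} : p\neq q,\ 1\le p,q\le a\}$. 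By Remark \ref{dimH1} its dimension is $r+m+Da-1=a^2-1$, which agrees with the count $(a-1)+a(a-1)$.

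Next I would check that $V$ is a Lie subalgebra of $\HH^1(A)$. Reading off the Lie bracket table, $[x_j,x_{j'}]=0$, every bracket $[x_j,w_{p'q'}]=A_j^{p'q'}$ is a linear combination of $w$'s, and every bracket $[w_{pq},w_{p'q'}]=E_{pq}^{p'q'}$ is a linear combination of $w$'s and $x$'s (including the degenerate case $q=p'$, $p=q'$, where the outcome is $x_{p'}$, $-x_p$ or $x_p-x_{p'}$). Hence all brackets of generators of $V$ again lie in $V$, so $V$ is closed under the bracket.

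Finally I would define $\Phi\colon \HH^1(\mathds{k}Q_a)\to V$ by sending each basis vector $x_j$, $w_{pq}$ to the element of $\HH^1(A)$ carrying the same name. Since the $x_j$ and $w_{pq}$ form part of the basis of $\HH^1(A)$ described in Theorem \ref{baseH1} they are linearly independent, so $\Phi$ is a linear isomorphism by the dimension count above. It remains to see that $\Phi$ preserves brackets, and this is the crux of the argument: the structure constants $A_j^{p'q'}$ and $E_{pq}^{p'q'}$ produced by Theorem \ref{GerH1} depend only on the indices $j,p,q,p',q'\in\{1,\dots,a\}$ and on no further data of $A$. The reason is that in the formula $[\alpha\|h,\beta\|b]=\beta\|b^{\alpha\|h}-\alpha\|h^{\beta\|b}$ the relevant arrows and targets are all among the length-one branches $\alpha^{(1)},\dots,\alpha^{(a)}$ from $0$ to $\omega$, which play identical combinatorial roles in $Q_a$ and in the toupie quiver of $A$; in particular the substitution $b^{\alpha\|h}$ never meets a monomial or non-monomial relation, so it yields the same element over $\mathds{k}Q_a$ as over $A$. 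Thus the bracket table restricted to $\{x_j,w_{pq}\}$ coincides term by term with the full bracket table of $\HH^1(\mathds{k}Q_a)$, and $\Phi$ is a Lie algebra isomorphism. The only point requiring care is this comparison of structure constants; once it is granted, the isomorphism is immediate.
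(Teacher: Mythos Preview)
Your proof is correct and follows essentially the same approach as the paper: define the index-preserving linear map $\HH^1(\mathds{k}Q_a)\to\HH^1(A)$ on the bases supplied by Theorem~\ref{baseH1}, observe it is injective, and check it is a Lie homomorphism because the bracket formulas from Theorem~\ref{GerH1} and the Lie bracket table involve only the arrows $\alpha^{(1)},\dots,\alpha^{(a)}$ and hence yield identical structure constants in both algebras. You have simply made explicit the verification (closure of $V$ under the bracket and the matching of $A_j^{p'q'}$, $E_{pq}^{p'q'}$) that the paper leaves implicit.
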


\begin{proof}
Let $Q_{A}$ be the quiver of the toupie algebra $A$.
First note that $ \mathds{k}Q_{a}$ is a toupie algebra. Consequently, by Theorem \ref{baseH1}, the space $\HH^{1}( \mathds{k}Q_{a})$ 
has a basis:
\[\{X_{j}, W_{pq}~~with~~p\neq q;~~p,q=1,\dots, a~~and~~j=2,\dots, a\}.\]
There is a morphism of $\mathds{k}$-Lie algebras from $\HH^{1}( \mathds{k}Q_{a})$ to $\HH^{1}(A)$ that sends $X_{j}$ to 
$x_{j}$ and $W_{pq}$ to $w_{pq}$ for all $j,p,q$. This map is injective and induces an isomorphism between 
$\HH^{1}( \mathds{k}Q_{a})$ and its image.
\end{proof}

The proof of the next proposition is in \cite[Corollary 2.2.4]{S}. We will just give the explicit isomorphism since we are going to 
use it later.

From now on, suppose $\mathds{k}=\CC$.

\begin{proposition}\label{corosl}
The Lie subalgebra of $\HH^{1}(A)$ generated by the set $\{x_{j}, w_{pq}~~with~~p\neq q;~~p,q=1,\dots, a~~and~~j=2,\dots, a\}$ is 
isomorphic to $sl_{a}(\mathbb{C})$.
\end{proposition}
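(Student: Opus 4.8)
The plan is to exhibit an explicit linear bijection onto $sl_a(\CC)$ and check that it intertwines the brackets, reading off all the structure constants from Table~\ref{tab:label} (itself a consequence of Theorem~\ref{GerH1}). Write $V$ for the subspace of $\HH^1(A)$ spanned by $\{x_j : j=2,\dots,a\}\cup\{w_{pq}: p\neq q,\ 1\le p,q\le a\}$. The first thing I would record is that $V$ is closed under the bracket, so that it is exactly the Lie subalgebra generated by these elements: the relevant entries of the table give $[x_j,x_{j'}]=0$, $[x_j,w_{p'q'}]\in V$, and $[w_{pq},w_{p'q'}]\in V$, the last inclusion using that the ``diagonal'' outputs $\alpha^{(q)}\|\alpha^{(q)}-\alpha^{(p)}\|\alpha^{(p)}$ can be rewritten inside $V$ (see below). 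Since $\dim_{\CC} V = a(a-1)+(a-1)=a^2-1=\dim_{\CC} sl_a(\CC)$, it then suffices to produce a bracket-preserving linear isomorphism $\Phi\colon V\to sl_a(\CC)$.

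Let $E_{ij}$ denote the elementary matrices, satisfying $[E_{ij},E_{kl}]=\delta_{jk}E_{il}-\delta_{li}E_{kj}$, and recall that $\{E_{ij}:i\neq j\}\cup\{E_{jj}-E_{11}:j=2,\dots,a\}$ is a basis of $sl_a(\CC)$. I would define $\Phi$ on the chosen basis of $V$ by $\Phi(w_{pq})=E_{qp}$ and $\Phi(x_j)=E_{jj}-E_{11}$; it is convenient to set $x_1:=0$, consistently with the reading $\Phi(x_1)=0$. As $\Phi$ sends a basis of $V$ bijectively onto a basis of $sl_a(\CC)$, it is automatically a linear isomorphism, and the only content is the verification that it preserves brackets.

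It remains to check $\Phi([u,v])=[\Phi(u),\Phi(v)]$ for $u,v$ ranging over the basis of $V$. For two off-diagonal generators this amounts to comparing $[w_{pq},w_{p'q'}]=\delta_{p,q'}w_{p'q}-\delta_{q,p'}w_{pq'}$ with $[E_{qp},E_{q'p'}]=\delta_{pq'}E_{qp'}-\delta_{p'q}E_{q'p}$, which agree under $\Phi$; the mixed brackets $[x_j,w_{p'q'}]$ are handled the same way, using that each Kronecker delta in the table forces the affected index to equal $j$ or $1$, so that the matching matrix units $E_{jp'},E_{q'j},E_{1p'},E_{q'1}$ appear. The step I expect to require the most care is the diagonal bracket $[w_{pq},w_{qp}]=\alpha^{(q)}\|\alpha^{(q)}-\alpha^{(p)}\|\alpha^{(p)}$: writing $\alpha^{(i)}\|\alpha^{(i)}=x_i+\alpha^{(1)}\|\alpha^{(1)}$ and using $x_1=0$, this class equals $x_q-x_p$, which is precisely $\Phi^{-1}(E_{qq}-E_{pp})=\Phi^{-1}([E_{qp},E_{pq}])$; the three subcases in the table (according to whether $p=1$, $q=1$, or neither) are all instances of this one identity, and the normalization $x_1=0$ is exactly what reconciles them. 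Once these identities are checked, $\Phi$ is an isomorphism of Lie algebras. Alternatively, one may first invoke Proposition~\ref{sl} to replace $V$ by $\HH^1(\mathds{k}Q_a)$ and run the identical verification there, which is the route that makes the explicit isomorphism transparent for later use.
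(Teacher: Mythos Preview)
Your proof is correct and follows essentially the same approach as the paper: the paper's proof simply records the explicit isomorphism $w_{pq}\mapsto E_{qp}$, $x_j\mapsto E_{jj}-E_{11}$ (referring to \cite[Corollary~2.2.4]{S} for the argument), while you spell out the closure of $V$ under the bracket and the case-by-case verification that $\Phi$ preserves brackets. The map you define is identical to the one in the paper.
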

\begin{proof}
The isomorphism sends $w_{pq}$ to the elementary $a\times a$ matrix $E_{qp}$ and $x_{j}$ to the $a\times a$ diagonal matrix 
$E_{jj}-E_{11}$.
\end{proof}
Now we will give a decomposition of $\HH^{1}(A)$ as a Lie algebra.

\begin{theorem}\label{lie}
Let $Q$ be a toupie quiver, $Q_{\rho}$ the quiver described in Definition \ref{qro} and $\{Q_{\rho}^{h} :h=1,\dots, r\}$ the set of 
the connected components of $Q_{\rho}$.
  The $\mathds{k}$-vector space 
  \[L=\langle t_{h}, z_{us}: h=1,\dots, r;  ~~u=1,\dots, a~~ and ~~s \text{ such that }\alpha ^{(s)}\in {}_{0}\mathcal{B}_{\omega}-Z \rangle\] 
  is a solvable Lie ideal of $\HH^{1}(A)$. Moreover,
\begin{enumerate}

\item $S_{1}=\langle t_{h}: h=1,\dots, r\rangle$ is an abelian Lie subalgebra; $L_{2}=\langle z_{us}: u=1,\dots, a~~ and ~~s$
$\text{ such that }\alpha^{(s)}\in {}_{0}\mathcal{B}_{\omega}-Z\rangle$ is an abelian ideal, and:
\[L=S_{1}\ltimes L_{2}.\]
\item There is an isomorphism of Lie algebras $\HH^{1}(A)\cong \langle C''_{1}\rangle\oplus sl_{a}(\mathbb{C})\ltimes (S_{1}\ltimes L_{2}).$
\end{enumerate}

\end{theorem}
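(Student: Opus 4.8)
The plan is to read the entire structure off the Lie bracket table, since every bracket between basis elements of $\HH^1(A)$ has already been computed there. Write $Z=\langle C''_1\rangle$ for the span of the $y_i$ and $\mathfrak{g}=\langle x_j,w_{pq}\rangle$ for the span of the $x_j$ and the $w_{pq}$, keeping $S_1=\langle t_h\rangle$, $L_2=\langle z_{us}\rangle$ and $L=\langle t_h,z_{us}\rangle$ as in the statement. First I would record the vector space decomposition $\HH^1(A)=Z\oplus\mathfrak{g}\oplus L$ furnished by the basis of Theorem \ref{baseH1}, and check that the dimensions are consistent: $\dim Z=m$, $\dim\mathfrak{g}=a^2-1$, $\dim S_1=r$ and $\dim L_2=a(D-a)$, whose sum is $r+m+Da-1=\dim\HH^1(A)$ by Remark \ref{dimH1}.

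For part (1), I would first observe that $S_1$ and $L_2$ are abelian, since $[t_k,t_{k'}]=0$ and $[z_{us},z_{u's'}]=0$ in the table. Because $[z_{us},t_{k'}]=-C_{k'}^{us}\in L_2$, the subspace $L_2$ is an ideal of $L$ while $S_1$ is a complementary subalgebra, so that $L=S_1\ltimes L_2$. That $L$ is an ideal of the whole algebra amounts to the observation that every bracket of a basis generator of $\HH^1(A)$ with a $t_h$ or a $z_{us}$ lands in $L$: indeed $[y_i,-]=0$, $[x_j,t_k]=[w_{pq},t_k]=0$, whereas $[x_j,z_{us}]=B_j^{us}$ and $[w_{pq},z_{us}]=D_{pq}^{us}$ both lie in $L_2\subseteq L$. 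Solvability is then immediate, since $[L,L]\subseteq L_2$ and $[L_2,L_2]=0$, so the derived series terminates after two steps.

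For part (2), I would assemble the three pieces. By Proposition \ref{corosl}, $\mathfrak{g}$ is a subalgebra isomorphic to $sl_{a}(\mathbb{C})$; by the previous paragraph $L$ is an ideal; and the table again gives $[\mathfrak{g},L]\subseteq L_2\subseteq L$, because $[x_j,t_k]=[w_{pq},t_k]=0$ and $[x_j,z_{us}],[w_{pq},z_{us}]\in L_2$. Hence $\mathfrak{g}\ltimes L$ is a subalgebra of $\HH^1(A)$ having $L$ as an ideal. Finally $Z$ is central by Proposition \ref{centro}, so $[Z,\mathfrak{g}\ltimes L]=0$; combined with $Z\cap(\mathfrak{g}\ltimes L)=0$ and $Z+(\mathfrak{g}\ltimes L)=\HH^1(A)$ from the dimension count, this presents $\HH^1(A)$ as the Lie algebra direct product $\langle C''_1\rangle\oplus\big(sl_{a}(\mathbb{C})\ltimes(S_1\ltimes L_2)\big)$, as claimed.

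I do not expect a genuine obstacle here: with the bracket table in hand the whole argument reduces to verifying the inclusions $[\,\cdot\,,\cdot\,]\subseteq\cdot$ among the five families of generators. The only points needing care are bookkeeping ones — that $S_1$ is only a subalgebra of $L$ and not an ideal (so that the inner product is genuinely semidirect rather than direct), that $L_2$ is an ideal not just of $L$ but of all of $\HH^1(A)$, and that the central summand $Z$ truly splits off as a direct factor rather than yielding a nontrivial central extension, the latter being guaranteed precisely by the existence of the complementary subalgebra $\mathfrak{g}\ltimes L$.
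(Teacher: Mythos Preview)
Your proposal is correct and follows essentially the same approach as the paper: both arguments read all the required inclusions directly off the Lie bracket table, establish that $L$ is a solvable ideal with $[L,L]\subseteq L_2$, and then combine Propositions \ref{centro} and \ref{corosl} to split off the centre and identify the $sl_a(\mathbb{C})$ piece. The only cosmetic difference is that the paper phrases part (2) by first passing to the quotient $\HH^1(A)/\mathcal{Z}(\HH^1(A))$ and identifying its quotient by $L$ with $sl_a(\mathbb{C})$, whereas you work directly with the complementary subalgebra $\mathfrak{g}$ inside $\HH^1(A)$; the content is the same.
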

\begin{proof}
Using the Lie bracket table we know that $[t_{h}, t_{h'}]=0$,
$[t_{k}, z_{us} ]= z_{us}$ or $0$, and $[z_{us}, z_{u's'}]=0$, so we conclude that $L$ is a Lie subalgebra of $\HH^{1}(A)$. 
Moreover, since $[x_{j}, t_{k}]=0$ and $[w_{pq}, t_{k}]=0$, $L$ is also an ideal.

From the previous computations we deduce that $L$ is solvable since $[L,L]\subset L_{2}$ and $L_{2}$ is abelian.

We also deduce that $S_{1}$ is an abelian subalgebra and $L_{2}$ is an ideal. Besides, since $L_{2}$ is abelian, it is solvable.
Finally, since the projection $\pi':L\rightarrow L/L_{2}$ induces an isomorphism between $S_{1}$ and $L/L_{2}$ it turns out that
\[L=S_{1}\ltimes L_{2}.\]
On the other hand, consider the Lie algebra $\widetilde{\HH^{1}(A)}:=\HH^{1}(A)/\mathcal{Z}(\HH^{1}(A))$
and the projection $\pi:\widetilde{\HH^{1}(A)}\rightarrow \widetilde{\HH^{1}(A)}/L$. 
Propositions \ref{sl} and \ref{corosl} provide an isomorphism between $sl_{a}(\mathbb{C})$ and $\widetilde{\HH^{1}(A)}/L$, 
which implies that:
\[\widetilde{\HH^{1}(A)}\simeq sl_{a}(\mathbb{C})\ltimes L\]
Using this isomorphism and Lemma \ref{centro}, 
we obtain the decomposition of $\HH^{1}(A)$.
\end{proof}

\begin{corollary}\label{rad}
Let $A$ be a toupie algebra. The radical of the Lie algebra $\HH^{1}(A)$ decomposes as follows:
\[rad(\HH^{1}(A))=L\oplus \langle C''_{1}\rangle.\]
\end{corollary}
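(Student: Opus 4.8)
The plan is to identify $L\oplus\langle C''_{1}\rangle$ as the unique maximal solvable ideal of $\HH^{1}(A)$. First I would record the two ingredients already available: by Proposition \ref{centro} the subspace $\langle C''_{1}\rangle$ equals the centre $\mathcal{Z}(\HH^{1}(A))$, so it is an abelian --- in particular solvable --- ideal, and by Theorem \ref{lie} the subspace $L=S_{1}\ltimes L_{2}$ is a solvable ideal of $\HH^{1}(A)$.

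Next I would check that the sum is direct and remains a solvable ideal. Directness is immediate: the spanning vectors $t_{h},z_{us}$ of $L$ and the vectors $y_{i}$ of $C''_{1}$ are distinct members of the basis of $\HH^{1}(A)$ exhibited at the start of Section \ref{Lie}, so $L\cap\langle C''_{1}\rangle=0$. Being a sum of two ideals, $L\oplus\langle C''_{1}\rangle$ is again an ideal, and a sum of two solvable ideals is solvable; hence $L\oplus\langle C''_{1}\rangle$ is a solvable ideal and is therefore contained in $rad(\HH^{1}(A))$.

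For the reverse inclusion I would use the decomposition of Theorem \ref{lie}, which yields a Lie algebra isomorphism
\[\HH^{1}(A)/\bigl(L\oplus\langle C''_{1}\rangle\bigr)\cong sl_{a}(\mathbb{C}).\]
The right-hand side has no nonzero solvable ideal (it is simple for $a\ge 2$ and zero for $a=1$). Writing $\pi$ for the projection onto this quotient, the image $\pi\bigl(rad(\HH^{1}(A))\bigr)$ is a solvable ideal of $sl_{a}(\mathbb{C})$, hence zero, so $rad(\HH^{1}(A))\subseteq\ker\pi=L\oplus\langle C''_{1}\rangle$. Combining the two inclusions gives the claimed equality.

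There is no serious obstacle here: the statement is essentially a bookkeeping consequence of Theorem \ref{lie} together with the standard fact that an ideal $I$ with $\mathfrak{g}/I$ semisimple must coincide with $rad(\mathfrak{g})$. The only points requiring care are to confirm the directness of the sum and to treat the degenerate case $a=1$ uniformly, where the semisimple quotient collapses to $0$ and $\HH^{1}(A)$ is itself solvable, so that the equality $rad(\HH^{1}(A))=\HH^{1}(A)=L\oplus\langle C''_{1}\rangle$ still holds.
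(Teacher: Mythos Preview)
Your argument is correct and follows essentially the same route as the paper: show that $L\oplus\langle C''_{1}\rangle$ is a solvable ideal, then observe that the quotient $\HH^{1}(A)/(L\oplus\langle C''_{1}\rangle)\cong sl_{a}(\mathbb{C})$ is semisimple, which forces this ideal to be the radical. Your version is simply more explicit about directness, the standard fact that the image of the radical in a semisimple quotient vanishes, and the degenerate case $a=1$.
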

\begin{proof}
Since $L$ is a solvable ideal and $\langle C''_{1}\rangle $ is abelian, it follows that $L\oplus \langle C''_{1}\rangle$ is a solvable ideal. 
Besides, the quotient of $\HH^{1}(A)$ by $L\oplus \langle C''_{1}\rangle$ is isomorphic  to $sl_{a}(\mathbb{C})$, which is semisimple.
\end{proof}
The next corollary gives a similar decomposition to the one given in Theorem \ref{lie}. It can be easily proven using the 
previous corollary and the Levi decomposition.

\begin{corollary}\label{lie2}
Let $A$ be a toupie algebra. The algebra $\HH^{1}(A)$ decomposes as follows:
\[\HH^{1}(A)=(L\oplus \langle C''_{1}\rangle)\rtimes sl_{a}(\mathbb{C}).\]
\end{corollary}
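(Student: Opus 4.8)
The plan is to deduce the statement from Corollary \ref{rad} by invoking the Levi--Malcev theorem, the decomposition being essentially a repackaging of information already assembled. Since we work over $\mathds{k}=\mathbb{C}$, a field of characteristic zero, every finite-dimensional Lie algebra admits a Levi decomposition: it is the semidirect product of its solvable radical with a semisimple subalgebra, the Levi complement, unique up to conjugation by inner automorphisms.

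First I would recall from Corollary \ref{rad} that $rad(\HH^{1}(A))=L\oplus \langle C''_{1}\rangle$. Applying the Levi--Malcev theorem to $\HH^{1}(A)$ then produces a semisimple subalgebra $\mathfrak{s}$ with $\HH^{1}(A)=(L\oplus \langle C''_{1}\rangle)\rtimes \mathfrak{s}$, where $\mathfrak{s}$ is isomorphic to the quotient $\HH^{1}(A)/rad(\HH^{1}(A))$. It remains to identify this Levi factor with $sl_{a}(\mathbb{C})$, and to check that the \emph{concrete} subalgebra furnished by Proposition \ref{corosl} can serve as the complement.

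To identify the factor, I would note that $\HH^{1}(A)/rad(\HH^{1}(A))$ is semisimple and, by the isomorphism $\widetilde{\HH^{1}(A)}/L\simeq sl_{a}(\mathbb{C})$ established in the proof of Theorem \ref{lie} together with Corollary \ref{rad}, is isomorphic to $sl_{a}(\mathbb{C})$. Propositions \ref{sl} and \ref{corosl} exhibit an explicit subalgebra $\mathfrak{s}_{0}\subset \HH^{1}(A)$, generated by $\{x_{j},w_{pq}:p\neq q,\ p,q=1,\dots,a,\ j=2,\dots,a\}$, that is isomorphic to $sl_{a}(\mathbb{C})$. A dimension count finishes the identification: by Remark \ref{dimH1}, $\dim_{\mathds{k}}\HH^{1}(A)=r+m+Da-1$, while $\dim_{\mathds{k}} rad(\HH^{1}(A))=r+m+aD-a^{2}$ (the $r$ elements $t_{h}$, the $m$ elements of $C''_{1}$, and the $a(D-a)$ elements $z_{us}$) and $\dim_{\mathds{k}} sl_{a}(\mathbb{C})=a^{2}-1$, so that $\dim_{\mathds{k}}\mathfrak{s}_{0}+\dim_{\mathds{k}} rad(\HH^{1}(A))=\dim_{\mathds{k}}\HH^{1}(A)$ and $\mathfrak{s}_{0}\cap rad(\HH^{1}(A))=0$.

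The only real point to watch is transversality: I must guarantee that $\mathfrak{s}_{0}$ is genuinely complementary to the radical rather than merely abstractly isomorphic to the abstract Levi factor. This is exactly what the dimension count settles, since any semisimple subalgebra meeting the solvable radical trivially and having complementary dimension is automatically a Levi complement; together with the identification of the factor as $sl_{a}(\mathbb{C})$ this yields $\HH^{1}(A)=(L\oplus \langle C''_{1}\rangle)\rtimes sl_{a}(\mathbb{C})$.
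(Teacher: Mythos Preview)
Your proof is correct and follows exactly the approach the paper indicates: combine Corollary \ref{rad} with the Levi--Malcev decomposition to obtain the semidirect product, then identify the Levi factor via Propositions \ref{sl} and \ref{corosl}. You have simply supplied the details (the dimension count and transversality check) that the paper leaves implicit.
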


\begin{corollary}\label{D=a,m=0}
Let $A$ be a toupie algebra such that $\HH^{1}(A)$ is not abelian. The Lie algebra $\HH^{1}(A)$ is semisimple if and only if $a=D$ 
and $m=0$.
\end{corollary}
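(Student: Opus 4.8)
The plan is to invoke the standard characterisation that a finite-dimensional Lie algebra over a field of characteristic zero is semisimple if and only if its radical vanishes, and then to read off when $rad(\HH^1(A))$ is zero from its explicit description. Since $\HH^1(A)$ is assumed non-abelian we are in the standing case $a>0$, $D>1$, and Corollary~\ref{rad} gives $rad(\HH^1(A))=L\oplus\langle C''_1\rangle$ with $L=S_1\ltimes L_2$. Thus $\HH^1(A)$ is semisimple precisely when $L=0$ and $\langle C''_1\rangle=0$, and I would treat these two vanishing conditions separately. The condition $\langle C''_1\rangle=0$ is immediate from Proposition~\ref{centro}, since $\langle C''_1\rangle$ has basis $\{y_i:i=1,\dots,m\}$ and therefore vanishes if and only if $m=0$.

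For $L$, note that $L=0$ if and only if both $L_2=0$ and $S_1=0$. The abelian ideal $L_2=\langle z_{us}\rangle$ has basis indexed by $u=1,\dots,a$ and by the $D-a$ branches of ${}_0\mathcal{B}_\omega-Z$, so $\dim_{\mathds{k}}L_2=a(D-a)$; as $a>0$ this gives $L_2=0$ if and only if $D=a$. The abelian subalgebra $S_1=\langle t_h:h=1,\dots,r\rangle$ vanishes exactly when $r=0$, i.e.\ when the graph $Q_\rho$ of Definition~\ref{qro} has no vertices, which happens if and only if $l=0$ and $n=0$ (there are neither length-$\geq 2$ branches free of relations nor branches involved in non-monomial relations).

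The crux, and what I expect to be the main obstacle, is to reconcile these two descriptions, namely to prove that $D=a$ already forces $l=n=0$, so that $L_2=0 \iff S_1=0 \iff D=a$ and hence $L=0 \iff D=a$. For this I would start from $D=a+l+n-s$, where $s$ denotes the number of linearly independent non-monomial relations; since $l\geq 0$ and $n-s\geq 0$ (the rank $s$ is at most the number $n$ of columns), the equality $D=a$ forces $l=0$ and $n=s$. Passing to the reduced row echelon form of the relation matrix, the pivot columns number exactly $s$, while every relation $\rho_i=\alpha^{(k_i)}+\sum_{j>k_i}b_{ij}\alpha^{(j)}$ has $f_{\rho_i}\neq 0$, and each branch occurring in $f_{\rho_i}$ sits in a non-pivot column (a pivot branch is cleared from every other row in reduced form, so it cannot appear with nonzero coefficient in $\rho_i$). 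Consequently, as soon as $n\geq 1$ there is at least one non-pivot column, whence $n>s$; therefore $n=s$ is only possible when $n=0$, and then $l=0$ as well, giving $r=0$. Putting the three conditions together yields $rad(\HH^1(A))=0$ if and only if $D=a$ and $m=0$; in that case non-abelianity forces $a=D>1$, so $\HH^1(A)\cong sl_a(\CC)$ with $a\geq 2$ is indeed semisimple, which completes the equivalence.
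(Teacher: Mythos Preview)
Your proof is correct and follows the same route as the paper's: both invoke Corollary~\ref{rad} to identify $rad(\HH^1(A))=L\oplus\langle C''_1\rangle$ and then determine when this vanishes in terms of the explicit basis. Your argument is in fact more thorough in one place: you explicitly show, via the reduced row echelon form of the relation matrix, that $D=a$ forces $l=n=0$ (since any non-monomial relation must involve a non-pivot branch, $n=s$ is impossible unless $n=0$) and hence $r=0$, whereas the paper simply asserts without justification that $a=D$ and $m=0$ leave no basis elements $t_h$.
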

\begin{proof}

If $\HH^{1}(A)$ is semisimple, then $rad(\HH^{1}(A))=0$ and $\# C''_{1}=m=0$. Using Corollary \ref{rad}, 
we know that $\HH^{1}(A)$ is isomorphic to $sl_{a}(\mathbb{C})$, via the identification of $sl_{a}(\mathbb{C})$ with 
$\langle x_{j}, w_{pq}~~:p\neq q;~~p,q=1,\dots, a~~and~~j=2,\dots, a\rangle$ as in Corollary \ref{lie2}.
Since $\HH^{1}(A)$ is not abelian, Theorem \ref{a=0od>1} implies that there exists at least one arrow from $0$ to 
$\omega$ that we will call $\alpha^{(1)}\in Z$.
If $a$ was strictly smaller than $D$, then it would exist a branch $\alpha^{(i)}\in {}_{0}\mathcal{B}_{\omega}-Z$. 
Consequently, the element $z_{1i}=\alpha^{(1)}\|\alpha^{(i)}$ would be non zero in $\HH^{1}(A)$ and that leads to a contradiction.

Let us prove the converse. We know that if $a=D$ and $m=0$, then the basis of $\HH^{1}(A)$ does not contain elements of the form 
$y_{i}$, $z_{us}$ or $t_{h}$ and it follows that $\HH^{1}(A)\simeq sl_{a}(\mathbb{C})$.
\end{proof}

\section{$\HH^{i}(A)$ as representation of $\HH^{1}(A)$}

\subsection{$\HH^{2}(A)$ as a Lie representation of $\HH^{1}(A)$}
\label{H2comorep}

In this section we will describe the action of the Lie algebra $\HH^{1}(A)$ on $\HH^{2}(A)$.
Given a path $c$, we will call $\overleftarrow{{}_{1}c}$ the first arrow of $c$.
Consider $\alpha\| b\in \HH^{1}(A)$, $\rho\| c\in \HH^{2}(A)$ and a monomial relation $\sigma$ belonging to a minimal set of generators of $\emph{I}$. Let us compute $[\alpha\| b, \rho\| c]\in \HH^{2}(A)$:
\begin{align*}
[\alpha\| b, \rho\| c](\sigma)&=\varphi_{2}^{\ast}[\eta_{1}^{\ast}(\alpha\| b), \eta_{2}^{\ast}(\rho\| c)](\sigma)\\&=\sum_{\sigma}[\eta_{1}^{\ast}(\alpha\| b), \eta_{2}^{\ast}(\rho\| c)](1\otimes \overline{\sigma^{(1)}}\otimes \overleftarrow{\sigma^{(2)}}\otimes \sigma^{(3)})
\\&=\sum_{\sigma}(\eta_{1}^{\ast}(\alpha\| b)\circ \eta_{2}^{\ast}(\rho\| c)(\overline{\sigma^{(1)}}\otimes \overleftarrow{\sigma^{(2)}}) )\sigma^{(3)}\\&~~~~-\sum_{\sigma}\eta_{2}^{\ast}(\rho\| c)(\overline{\eta_{1}^{\ast}(\alpha\| b)( \overline{\sigma^{(1)}})}\otimes \overleftarrow{\sigma^{(2)}})\sigma^{(3)}\\&~~~~-\sum_{\sigma}\eta_{2}^{\ast}(\rho\| c)(\overline{\sigma^{(1)}}\otimes \overline{\eta_{1}^{\ast}(\alpha\| b)(\overleftarrow{\sigma^{(2)}})})\sigma^{(3)},
\end{align*}
where the sums range over all possible decompositions of $\sigma$ with $ \overleftarrow{\sigma^{(2)}}$ an arrow.
Let us first compute the first summand:

\begin{align*}
\sum_{\sigma}(\eta_{1}^{\ast}(\alpha\| b)\circ \eta_{2}^{\ast}(\rho\| c)( \overline{\sigma^{(1)}}\otimes \overleftarrow{\sigma^{(2)}})) \sigma^{(3)}&=\sum_{\sigma}\eta_{1}^{\ast}(\alpha\| b)(\overline{ (\rho\| c)(\eta_{2}(1\otimes\overline{\sigma^{(1)}}\otimes \overleftarrow{\sigma^{(2)}}\otimes 1)))\sigma^{(3)}}.
\end{align*}

Since the monomial relation $\sigma$ does not contain any other relation, the term $\eta_{2}(1\otimes
\overline{\sigma^{(1)}}\otimes \overleftarrow{\sigma^{(2)}}\otimes 1)$ vanishes except for the case where $\sigma^{(3)}$ is $1$,
thus
\begin{align*}
 \sum_{\sigma}\eta_{1}^{\ast}(\alpha\| b)( \overline{(\rho\| c)(\eta_{2}(1\otimes \overline{\sigma^{(1)}}\otimes \overleftarrow{\sigma^{(2)}}\otimes 1)))\sigma^{(3)}}&=\eta_{1}^{\ast}(\alpha\| b) \overline{(\rho\| c)(\eta_{2}(1\otimes \overline{\sigma^{(1)}}\otimes \overleftarrow{\sigma^{(2)}}\otimes 1))}\\&=\eta_{1}^{\ast}(\alpha\| b) \overline{(\rho\| c)(\sigma)}\\&=\delta_{\rho, \sigma}\eta_{1}^{\ast}(\alpha\| b) (\overline{c})\\&=\delta_{\rho, \sigma}\alpha\| b(\eta_{1} (1\otimes \overline{c}\otimes 1))\\&=\delta_{\rho, \sigma}\sum_{c}\alpha\| b(c^{(1)}\otimes \overleftarrow{c^{(2)}}\otimes c^{(3)})
 \\&=\left\{
       \begin{array}{ll}
         \delta_{\rho, \sigma}c^{\alpha\|b}& \hbox{if $\alpha=\overleftarrow{{}_{1}c}$;} \\
         0 & \hbox{otherwise.}
       \end{array}
     \right.
 \end{align*}
As for the second summand,\\
$\displaystyle -\sum_{\sigma}\eta_{2}^{\ast}(\rho\| c)(\overline{\eta_{1}^{\ast}(\alpha\| b)(\overline{\sigma^{(1)}})}\otimes \overleftarrow{\sigma^{(2)}})\sigma^{(3)}=\left\{
                                                                                                                                   \begin{array}{ll}
                                                                                                                                    \displaystyle -\sum_{\sigma}(\rho\| c)\eta_{2}(1\otimes \overline{\sigma^{(1)}}\otimes \overleftarrow{\sigma^{(2)}}\otimes 1)\sigma^{(3)}  & \hbox{if $\alpha=\overleftarrow{{}_{1}\sigma}$;} \\
                                                                                                                                     \quad 0 & \hbox{otherwise,}
                                                                                                                                   \end{array}
                                                                                                                                 \right.
$
\break

\noindent since if $\alpha\neq \overleftarrow{{}_{1}\sigma}$ then $\eta_{1}^{\ast}(\alpha\| b)(\overline{\sigma^{(1)}})=(\alpha\| b)(\eta_{1}(1\otimes \overline{\sigma^{(1)}}\otimes 1))=0$.
For the computation in the first line observe that if $\alpha= \overleftarrow{{}_{1}\sigma}$, then $b = \alpha$.

\noindent Taking into account that $\sigma$ does not contain any other relation, we know that $\eta_{2}(1\otimes \overline{\sigma^{(1)}}\otimes \overleftarrow{\sigma^{(2)}}\otimes 1)$ vanishes except for the case where $\sigma^{(3)}=1$; in that case,
$$-\sum_{\sigma}(\rho\| c)\eta_{2}(1\otimes \overline{\sigma^{(1)}}\otimes \overleftarrow{\sigma^{(2)}}\otimes 1)\sigma^{(3)}=-(\rho\| c)\eta_{2}(1\otimes \overline{\sigma^{(1)}}\otimes\overleftarrow{\sigma^{(2)}}\otimes 1)=-(\rho\| c)(\sigma)=-\delta_{\sigma,\rho}c.$$
\bigbreak

Finally, we will prove that the last summand vanishes. For this, notice that \\
$\displaystyle -\sum_{\sigma}\eta_{2}^{\ast}(\rho\| c)(1\otimes \overline{\sigma^{(1)}}\otimes\overline{ \eta_{1}^{\ast}(\alpha\| b)(\overleftarrow{\sigma^{(2)}}})\otimes 1)\sigma^{(3)}=\left\{
   \begin{array}{ll}
     -\eta_{2}^{\ast}(\rho\| c)(1\otimes \overline{1}\otimes \overleftarrow{\sigma^{(1)}}\otimes 1)\sigma^{(2)} & \hbox{if $\alpha=\overleftarrow{{}_{1}\sigma}$;} \\
     0 & \hbox{otherwise.}
   \end{array}
 \right.
$\\
and observe that $1\otimes \overline{1}\otimes \overleftarrow{\sigma^{(1)}}\otimes 1$ is zero in the $E$-reduced Bar resolution.
\medskip

\noindent Given a non monomial relation,  which is necessarily  of the form 
\[\rho_{i}=\alpha^{(k_{i})}+\sum_{j> k_{i}} b_{ij}\alpha^{(j)} \text{ for some $i$}\]
 and using the same notations,
\begin{align*}
[\alpha\| b, \rho\| c](\rho_{i})&=\varphi_{2}^{\ast}[\eta_{1}^{\ast}(\alpha\| b), \eta_{2}^{\ast}(\rho\| c)](\rho_{i})
\\&=\sum_{\alpha^{(k_{i})}}[\eta_{1}^{\ast}(\alpha\| b), \eta_{2}^{\ast}(\rho\| c)](1\otimes\overline{ (\alpha^{(k_{i})})^{(1)}}\otimes \overleftarrow{(\alpha^{(k_{i})})^{(2)}}\otimes (\alpha^{(k_{i})})^{(3)})\\&~~~~+\sum_{j> k_{i}} b_{ij}\sum_{\alpha^{(j)}}[\eta_{1}^{\ast}(\alpha\| b), \eta_{2}^{\ast}(\rho\| c)](1\otimes \overline{(\alpha^{(j)})^{(1)}}\otimes \overleftarrow{(\alpha^{(j)})^{(2)}}\otimes (\alpha^{(j)})^{(3)})
\\&=\sum_{\alpha^{(k_{i})}}\eta_{1}^{\ast}(\alpha\| b)\circ \eta_{2}^{\ast}(\rho\| c)(1\otimes\overline{ (\alpha^{(k_{i})})^{(1)}}\otimes \overleftarrow{(\alpha^{(k_{i})})^{(2)}}\otimes (\alpha^{(k_{i})})^{(3)})\\&~~~~+\sum_{j> k_{i}}b_{ij}\sum_{\alpha^{(j)}}\eta_{1}^{\ast}(\alpha\| b)\circ \eta_{2}^{\ast}(\rho\| c)(1\otimes \overline{(\alpha^{(j)})^{(1)}}\otimes \overleftarrow{(\alpha^{(j)})^{(2)}}\otimes (\alpha^{(j)})^{(3)})\\&~~~~ -\sum_{\alpha^{(k_{i})}}\eta_{2}^{\ast}(\rho\| c)(\overline{\eta_{1}^{\ast}(\alpha\| b)((\overline{\alpha^{(k_{i})})^{(1)}})}\otimes \overleftarrow{(\alpha^{(k_{i})})^{(2)}})(\alpha^{(k_{i})})^{(3)}\\&~~~~  -\sum_{j> k_{i}}b_{ij}\sum_{\alpha^{(j)}}\eta_{2}^{\ast}(\rho\| c)(\overline{\eta_{1}^{\ast}(\alpha\| b)(\overline{(\alpha^{(j)})^{(1)}})}\otimes \overleftarrow{(\alpha^{(j)})^{(2)}})(\alpha^{(j)})^{(3)}\\&~~~~-\sum_{\alpha^{(k_{i})}}\eta_{2}^{\ast}(\rho\| c)((\overline{\alpha^{(k_{i})})^{(1)}}\otimes \overline{\eta_{1}^{\ast}(\alpha\| b)(\overleftarrow{(\alpha^{(k_{i})})^{(2)}})})(\alpha^{(k_{i})})^{(3)}\\&~~~~-\sum_{j> k_{i}}b_{ij}\sum_{\alpha^{(j)}}\eta_{2}^{\ast}(\rho\| c)((\overline{\alpha^{(j)})^{(1)}}\otimes\overline{ \eta_{1}^{\ast}(\alpha\| b)(\overleftarrow{(\alpha^{(j)})^{(2)}})})(\alpha^{(j)})^{(3)}.
\end{align*}

Let us compute the first summand:
\begin{align*}
~~~~&\sum_{\alpha^{(k_{i})}}\eta_{1}^{\ast}(\alpha\| b)\circ \eta_{2}^{\ast}(\rho\| c)(1\otimes\overline{ (\alpha^{(k_{i})})^{(1)}}\otimes \overleftarrow{(\alpha^{(k_{i})})^{(2)}}\otimes (\alpha^{(k_{i})})^{(3)})\\=&\sum_{\alpha^{(k_{i})}}\eta_{1}^{\ast}(\alpha\| b)(\overline{ (\rho\| c)(\eta_{2}(1\otimes\overline{{(\alpha^{(k_{i})})}^{(1)}}\otimes\overleftarrow{(\alpha^{(k_{i})})^{(2)}}\otimes 1))(\alpha^{(k_{i})})^{(3)}}).
\end{align*}

The term $\eta_{2}(1\otimes\overline{(\alpha^{(k_{i})})^{(1)}}\otimes \overleftarrow{(\alpha^{(k_{i})})^{(2)}}\otimes 1)$ vanishes except when $(\alpha^{(k_{i})})^{(3)}=1$, and in this case, by definition of $\eta_{2}$, the result equals $\rho_{i}$.\\
Hence,
\begin{align*}
 \sum_{\alpha^{(k_{i})}}\eta_{1}^{\ast}(\alpha\| b)(\overline{ (\rho\| c)(\eta_{2}(1\otimes\overline{(\alpha^{(k_{i})})^{(1)}}\otimes  \overleftarrow{(\alpha^{(k_{i})})^{(2)}}\otimes 1))(\alpha^{(k_{i})})^{(3)}})&=\eta_{1}^{\ast}(\alpha\| b) \overline{(\rho\| c)(\rho_{i})}\\&=\delta_{\rho, \rho_{i}}\eta_{1}^{\ast}(\alpha\| b) (\overline{c})\\&=\delta_{\rho, \rho_{i}}(\alpha\| b)(\eta_{1} (1\otimes \overline{c}\otimes 1))\\&=\left\{
                                                                                                   \begin{array}{ll}
                                                                                                     \delta_{\rho, \rho_{i}}c^{\alpha\| b} & \hbox{$\alpha=\overleftarrow{{}_{1}c}$;} \\
                                                                                                     0 & \hbox{otherwise.}
                                                                                                   \end{array}
                                                                                                 \right.
.
 \end{align*}

Let us verify that the second summand is zero:
\begin{align*}
&\sum_{j> k_{i}}b_{ij}\sum_{\alpha^{(j)}}\eta_{1}^{\ast}(\alpha\| b)\circ \eta_{2}^{\ast}(\rho\| c)(1\otimes \overline{(\alpha^{(j)})^{(1)}}\otimes \overleftarrow{(\alpha^{(j)})^{(2)}}\otimes (\alpha^{(j)})^{(3)})
\\&=\sum_{j> k_{i}}b_{ij}\sum_{\alpha^{(j)}}\eta_{1}^{\ast}(\alpha\| b)(\overline{ (\rho\| c)(\eta_{2}(1\otimes\overline{(\alpha^{(j)})^{(1)}}\otimes \overleftarrow{(\alpha^{(j)})^{(2)}}\otimes 1))(\alpha^{(j)})^{(3)}})\\&=0
\end{align*}
since $\eta_{2}(1\otimes\overline{(\alpha^{(j)})^{(1)}}\otimes \overleftarrow{(\alpha^{(j)})^{(2)}}\otimes 1)=0$ for every $j>k_{i}$.\\

To deal with the third summand, we need to distinguish two different cases.
\begin{enumerate}

\item If $\alpha$ is not the first arrow of $\alpha^{(k_{i})}$ then:
\[-\sum_{\alpha^{(k_{i})}}\eta_{2}^{\ast}(\rho\| c)(\overline{\eta_{1}^{\ast}(\alpha\| b)((\overline{\alpha^{(k_{i})})^{(1)}}})\otimes \overleftarrow{(\alpha^{(k_{i})})^{(2)}})(\alpha^{(k_{i})})^{(3)}=0.\]
\item If $\alpha$ is the first arrow of $\alpha^{(k_{i})}$, which implies that $b=\alpha$:
\begin{align*}
&-\sum_{\alpha^{(k_{i})}}\eta_{2}^{\ast}(\rho\| c)(\overline{\eta_{1}^{\ast}(\alpha\| \alpha)((\overline{\alpha^{(k_{i})})^{(1)}}})\otimes \overleftarrow{(\alpha^{(k_{i})})^{(2)}})(\alpha^{(k_{i})})^{(3)}\\&=-\sum_{\alpha^{(k_{i})}}\eta_{2}^{\ast}(\rho\| c)(\overline{(\alpha\| \alpha)(\eta_{1}(1\otimes\overline{(\alpha^{(k_{i})})^{(1)}}\otimes 1)}\otimes \overleftarrow{(\alpha^{(k_{i})})^{(2)}})(\alpha^{(k_{i})})^{(3)}\\&=-\sum_{\alpha^{(k_{i})}}\eta_{2}^{\ast}(\rho\| c)((\overline{\alpha^{(k_{i})})^{(1)}}\otimes \overleftarrow{(\alpha^{(k_{i})})^{(2)}})(\alpha^{(k_{i})})^{(3)}\\&=-\sum_{\alpha^{(k_{i})}}(\rho\| c)\eta_{2}(1\otimes(\overline{\alpha^{(k_{i})})^{(1)}}\otimes\overleftarrow{(\alpha^{(k_{i})})^{(2)}}\otimes 1)(\alpha^{(k_{i})})^{(3)}.
\end{align*}
  The term $\eta_{2}(1\otimes(\overline{\alpha^{(k_{i})})^{(1)}} \otimes \overleftarrow{(\alpha^{(k_{i})})^{(2)}}\otimes 1)$ vanishes except for $(\alpha^{(k_{i})})^{(3)}=1$, and in this 
  case $\eta_{2}(1\otimes(\overline{\alpha^{(k_{i})})^{(1)}}\otimes \overleftarrow{(\alpha^{(k_{i})})^{(2)}}\otimes 1)=\rho_{i}$ .
Therefore,
\begin{align*}
-\sum_{\alpha^{(k_{i})}}(\rho\| c)\eta_{2}(1\otimes(\overline{\alpha^{(k_{i}}))^{(1)}}\otimes \overleftarrow{(\alpha^{(k_{i})})^{(2)}}\otimes 1)(\alpha^{(k_{i})})^{(3)}&=-(\rho\| c)\eta_{2}(1\otimes(\overline{\alpha^{(k_{i})})^{(1)}}\otimes \overleftarrow{(\alpha^{(k_{i})})^{(2)}}\otimes 1)\\&=-(\rho\| c)(\rho_{i})\\&=-\delta_{\rho,\rho_{i}}c.
\end{align*}
\end{enumerate}
An analogous computation shows that the fourth summand is zero. The situation is even easier, since in this case
 $\eta_{2}(1\otimes(\overline{\alpha^{(j)})^{(1)}}\otimes \overleftarrow{(\alpha^{(j)})^{(2)}}\otimes 1)=0$ for $j>k$. 
 
\medskip

\noindent Finally we verify that the last two summands vanish. For the fifth summand observe that \\
$-\sum_{\alpha^{(k_{i})}}\eta_{2}^{\ast}(\rho\| c)((\overline{\alpha^{(k_{i})})^{(1)}}\otimes \overline{\eta_{1}^{\ast}(\alpha\| b)(\overleftarrow{(\alpha^{(k_{i})})^{(2)}})})(\alpha^{(k_{i})})^{(3)}$ equals $ -(\rho\| c)(\eta_{2}(1\otimes\overline{1}\otimes \overleftarrow{(\alpha^{(k_{i})})^{(2)}}\otimes 1 ))(\alpha^{(k_{i})})^{(3)}$  if $\alpha$ is the first arrow of $\alpha^{(k_{i})}$ and it is zero otherwise.

\noindent From the fact that $1\otimes\overline{1}\otimes \overleftarrow{(\alpha^{(k_{i})})^{(2)}}\otimes 1$ is zero in the $E$-reduced Bar resolution,
we conclude that the fifth summand also vanishes.
\bigbreak
\noindent The last summand also vanishes for analogous reasons.\bigbreak
Summarising, given $\alpha\|b\in \HH^{1}(A)$ and $\rho\|c\in \HH^{2}(A)$:

\begin{equation}\label{eq:corchete}
[\alpha\|b, \rho\|c]=\delta_{\alpha, \overleftarrow{{}_{1}c}}\rho\|c^{\overleftarrow{{}_{1}c}\|b}-\delta_{\alpha,\overleftarrow{{}_{1}W_{\rho}}}\rho\|c.
\end{equation}


\medskip 

Next we are going to describe the action of $sl_{a}(\mathbb{C})$ as a Lie subalgebra of $\HH^{1}(A)$ in $\HH^{2}(A)$.

\begin{theorem}\label{descH2sla}
Let $A$ be a toupie algebra with $a>0$, and let 
\[E=\{\rho\|\alpha^{(k)}: \text{ $\rho$ is a relation from } 0 \text{ to } \omega \text{ and }\alpha^{(k)}\in {}_{0}\mathcal{B}_{\omega}-Z \}\] 
be a subset of the basis of $\HH^{2}(A)$. 

The space $\HH^{2}(A)$ decomposes, as representation of $sl_{a}(\mathbb{C})$, in a direct sum as follows:\\
\[\HH^{2}(A)\simeq\bigoplus_{dim_{ \mathbb{C}} <E>} V_{0}~~\oplus \bigoplus_{\rho:s(\rho)=0, t(\rho)=\omega} V\] 
where $V_{0}\simeq \mathbb{C}$ is the trivial representation and $V$ is the standard representation of $sl_{a}(\mathbb{C})$.
\end{theorem}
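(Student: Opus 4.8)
The plan is to use Proposition~\ref{corosl}, which identifies the subalgebra of $\HH^1(A)$ generated by the $x_j$ and the $w_{pq}$ with $sl_a(\mathbb{C})$, and to let this Lie algebra act on $\HH^2(A)$ via the Gerstenhaber bracket. Since $D_2=0$ we have $\HH^2(A)=\Hom_{E^e}(\mathds{k}\mathcal{R},A)/\im D_1$, and every $2$-cochain is a cocycle; because the bracket of a $1$-cocycle with a coboundary is again a coboundary, $\im D_1$ is an $sl_a(\mathbb{C})$-submodule of the cochain space, so the entire computation can be carried out on the representatives $\rho\|\alpha^{(k)}$ using formula~\eqref{eq:corchete}.

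First I would decompose the cochain space as an $sl_a(\mathbb{C})$-module. Formula~\eqref{eq:corchete} shows that $\alpha\|b$ acts on $\rho\|c$ by fixing the first slot $\rho$ and only modifying the target branch $c$, so for each relation $\rho$ from $0$ to $\omega$ the subspace $M_\rho:=\langle \rho\|\alpha^{(k)}: \alpha^{(k)}\in {}_0\mathcal{B}_\omega\rangle$ is stable and $\Hom_{E^e}(\mathds{k}\mathcal{R},A)=\bigoplus_{\rho:\,0\to\omega}M_\rho$. The key point, to be read off~\eqref{eq:corchete}, is that the leading word $W_\rho$ of any relation from $0$ to $\omega$ is a branch of length $\geq 2$ (an arrow from $0$ to $\omega$ lies in $Z$ and is never a relation), so its first arrow is never in $Z$; hence the term $\delta_{\alpha,\overleftarrow{{}_1W_\rho}}\rho\|c$ vanishes for every generator $\alpha\|b\in sl_a(\mathbb{C})$. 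A direct evaluation then gives $[w_{pq},\rho\|\alpha^{(k)}]=\delta_{k,p}\,\rho\|\alpha^{(q)}$ and $[x_j,\rho\|\alpha^{(k)}]=(\delta_{k,j}-\delta_{k,1})\,\rho\|\alpha^{(k)}$ when $\alpha^{(k)}\in Z$, and $0$ when $\alpha^{(k)}\in {}_0\mathcal{B}_\omega-Z$. Under the identifications $w_{pq}\leftrightarrow E_{qp}$ and $x_j\leftrightarrow E_{jj}-E_{11}$ of Proposition~\ref{corosl}, this is exactly the standard action on the $a$-dimensional space $\langle \rho\|\alpha^{(1)},\dots,\rho\|\alpha^{(a)}\rangle\cong V$, while each $\rho\|\alpha^{(k)}$ with $\alpha^{(k)}\notin Z$ spans a trivial summand; thus $M_\rho\cong V\oplus V_0^{\oplus(D-a)}$.

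Next I would locate $\im D_1$. From the explicit formula for $D_1$ and the bases $B_1,B_2$, every element of $\im D_1$ is a linear combination of terms $\rho_i\|\alpha^{(j)}$ in which $\alpha^{(j)}$ is a branch occurring in the non monomial relation $\rho_i$; since such branches are ordered after $Z$ they never lie in $Z$, so $\im D_1$ is contained in the trivial part $\bigoplus_{\rho}\langle \rho\|\alpha^{(k)}:\alpha^{(k)}\notin Z\rangle$. Because $sl_a(\mathbb{C})$ is semisimple and $\mathrm{char}\,\mathds{k}=0$, Weyl's complete reducibility lets me compute the quotient $\HH^2(A)=\Hom_{E^e}(\mathds{k}\mathcal{R},A)/\im D_1$ module by module: the standard summands $\bigoplus_{\rho:\,0\to\omega}V$ are disjoint from $\im D_1$ and survive unchanged, producing one copy of $V$ per relation from $0$ to $\omega$, whereas the quotient of the trivial part by $\im D_1$ remains trivial and is spanned by the classes of those $\rho\|\alpha^{(k)}$ with $\alpha^{(k)}\notin Z$ that persist in the basis of Theorem~\ref{teoremaH2}, i.e. the elements of $E$, yielding $\dim_{\mathbb{C}}\langle E\rangle$ copies of $V_0$.

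The main obstacle is the bookkeeping in the second and third steps: verifying that the decomposition $\bigoplus_\rho M_\rho$ is $sl_a(\mathbb{C})$-stable, that the $\delta_{\alpha,\overleftarrow{{}_1W_\rho}}$ term vanishes for \emph{all} relations from $0$ to $\omega$ (via the length $\geq 2$ observation), and that $\im D_1$ sits entirely inside the trivial isotypic component, so that complete reducibility separates the standard and trivial parts cleanly and the counts match the stated decomposition.
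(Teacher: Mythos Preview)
Your proposal is correct and the bookkeeping you flag is indeed all that needs checking; each of your verifications goes through. The route, however, is genuinely different from the paper's.

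The paper works directly in $\HH^{2}(A)$ and invokes weight theory: it fixes the Cartan subalgebra $\mathfrak{h}=\langle x_j\rangle$, appeals to the invariance of the Jordan decomposition to get a weight space decomposition, computes that $\rho\|\alpha^{(i)}$ has weight $L_i$ for $\alpha^{(i)}\in Z$ and weight $0$ otherwise, checks that $L_1$ is maximal by showing $[w_{ij},\rho\|\alpha^{(1)}]=0$ for $j<i$, and then invokes the existence and uniqueness theorems for highest weight modules to identify the standard representation. Your approach instead stays at the cochain level, computes the full action of every generator explicitly via~\eqref{eq:corchete}, matches it entry by entry against the matrices $E_{qp}$ and $E_{jj}-E_{11}$ under the isomorphism of Proposition~\ref{corosl}, and then handles the passage to cohomology by showing $\im D_1$ lies in the trivial isotypic component and applying Weyl's theorem.

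Your argument is more elementary and more self-contained: it avoids the highest-weight machinery entirely and makes the treatment of the quotient by $\im D_1$ explicit, something the paper leaves implicit. The paper's approach, on the other hand, generalises more readily if one wanted to identify other irreducible summands in a less hands-on situation, since it only needs to locate a highest weight vector rather than verify the full action.
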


\begin{proof}
Recall that $D_{2}=0$ and so $Ker D_{2}=\Hom_{E^{e}}(\mathds{k}\mathcal{R}, A)$. Observe that
$\HH^{2}(A)$ is generated by the classes of the elements of the set 
$ \{\rho\|\alpha^{(i)}:\text{ $\rho$ is a relation from } 0 \text{ to $\omega$ and }\alpha^{(i)}\in Z\}\cup\{\rho\|\alpha^{(j)}:\text{ $\rho$ is a relation from } 0 \text{ to $\omega$ and }\alpha^{(j)}\in {}_{0}\mathcal{B}_{\omega}-Z\}.$

Using Proposition \ref{sl} and Proposition \ref{corosl}, we identify the generators of $sl_{a}(\mathbb{C})$ with the set 
$\{x_{j}, w_{pq}~~with~~p\neq q;~~p,q=1,\dots, a~~and~~j=2,\dots, a\}$ in $\HH^{1}(A)$. Now identify the set \[\mathfrak{h}=\{b_{1}\alpha^{(1)}\|\alpha^{(1)}+b_{2}\alpha^{(2)}\|\alpha^{(2)}+\dots +b_{a}\alpha^{(a)}\|\alpha^{(a)}: b_{1}+b_{2}+\dots +b_{a}=0\}=\langle x_{j}: j=2,\dots, a\rangle\] 
with the diagonal matrices with trace zero in $sl_{a}(\mathbb{C})$. Using the invariance of the Jordan decomposition 
--see Theorem 9.20 in \cite{FH}-- we know that $\mathfrak{h}$ acts diagonally on $\HH^{2}(A)$. 
This means that $\HH^{2}(A)=\oplus V_{\lambda}$ where $\lambda$ runs over $\mathfrak{h}^{*}$ and $V_{\lambda}=\{v\in V: h.v=\lambda(h)v\hbox{ for all }h\in \mathfrak{h}\}$
 is the weight space of weight $\lambda$.
 For every $i$, $1\leq i\leq a$, let $L_{i}\in \mathfrak{h}^{*}$ be defined by $L_{i}(b_{1}\alpha^{(1)}\|\alpha^{(1)}+\dots +b_{a}\alpha^{(a)})=b_{i}$.
Notice that $\mathfrak{h}^{*}=\mathbb{C}\{L_{1}, \dots, L_{a}\}/\langle L_{1}+\dots +L_{a}\rangle $.

Given $b_{1}\alpha^{(1)}\|\alpha^{(1)}+\dots +b_{a}\alpha^{(a)}\|\alpha^{(a)}\in \mathfrak{h}$, $\rho$ a relation from $0$ to $\omega$ and $\alpha^{(i)}\in Z$, 
and $i$ such that $1\le i \le a$, the equality (\ref{eq:corchete}) implies that
\[[b_{1}\alpha^{(1)}\|\alpha^{(1)}+\dots +b_{a}\alpha^{(a)}\|\alpha^{(a)}, \rho\| \alpha^{(i)}]=b_{i}\rho\| \alpha^{(i)},\]
therefore $\rho\| \alpha^{(i)}$ belongs to the weight space associated to the weight $L_{i}$.
Let us see that $L_{1}$ is a maximal weight of $\HH^{2}(A)$. For that, again by (\ref{eq:corchete}), it is enough to observe that given 
$w_{ij}=\alpha^{(i)}\| \alpha^{(j)}$ with $j< i$, the bracket $[w_{ij}, \rho\| \alpha^{(1)}]$ is zero. 
Using existence and uniqueness theorems --see \cite{Hu} Chapter VI Theorems A and B-- and the fact that $L_{1}$ is the unique maximal weight in the standard representation, 
we conclude that the irreducible representation generated by $\rho\| \alpha^{(1)}$, that turns out to be 
$\langle \{\rho\|\alpha^{(i)}:\text{ $\rho$ is a relation from $0$ to $\omega$ and }\alpha^{(i)}\in Z\}\rangle$, 
is isomorphic to the standard representation for each $\rho$ from $0$ to $\omega$.

On the other hand, if $\alpha^{(i)}\in {}_{0}\mathcal{B}_{\omega}-Z$, then
\[[b_{1}\alpha^{(1)}\|\alpha^{(1)}+\dots +b_{a}\alpha^{(a)}\|\alpha^{(a)}, \rho\| \alpha^{(i)}]=0\]
and this implies that if $\overline{\rho\| \alpha^{(i)}}$ is not zero in $\HH^{2}(A)$, 
then it belongs to the weight space associated to the weight $0$. 
Besides, in this case, the subrepresentation generated by $\rho\| \alpha^{(i)}$, which is $\mathbb{C}.\rho\| \alpha^{(i)}$, is isomorphic to the trivial representation and we obtain the decomposition we were looking for.
\end{proof}

\begin{remark}
Given a relation $\rho$ from $0$ to $\omega$, the $\mathds{k}$-vector space $V_{\rho}=\langle\rho\|\alpha^{(k)}:\alpha^{(k)}\in{}_{0}\mathcal{B}_{\omega}\rangle$ is a Lie subrepresentation of $\HH^{1}(A)$ in $\HH^{2}(A)$.
\end{remark}

\begin{theorem}\label{descH2}
Let $A$ be a toupie algebra such that $a$ is positive. The decomposition of $\HH^{2}(A)$ as Lie representation of $\HH^{1}(A)$ is
\[\HH^{2}(A)=\bigoplus_{\rho:s(\rho)=0, t(\rho)=\omega}V_{\rho},\]
where $V_{\rho}=\langle\rho\|\alpha^{(k)}:\alpha^{(k)}\in{}_{0}\mathcal{B}_{\omega}\rangle $. Besides, for every relation $\rho$ from $0$ to $\omega$, the $\HH^{1}(A)$-module $V_{\rho}$ 
is indecomposable.
Moreover,
if $D=a$, then $V_{\rho}$ is irreducible for every $\rho$. The converse holds if there exists a monomial relation from $0$ to $\omega$.
\end{theorem}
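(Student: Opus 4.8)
The plan is to read off everything from the bracket formula (\ref{eq:corchete}) together with the $sl_a(\mathbb{C})$-decomposition of Theorem \ref{descH2sla}. Formula (\ref{eq:corchete}) shows that for $\alpha\|b\in\HH^{1}(A)$ the bracket $[\alpha\|b,\rho\|c]$ is again supported on the same relation $\rho$, so each $V_{\rho}=\langle\rho\|\alpha^{(k)}:\alpha^{(k)}\in{}_{0}\mathcal{B}_{\omega}\rangle$ is stable under the $\HH^{1}(A)$-action; since these spaces span $\HH^{2}(A)$ and, by Theorem \ref{descH2sla}, the standard summand attached to each $\rho$ together with its trivial summands sit in distinct blocks of a vector-space direct sum, distinct $V_{\rho}$ meet only in $0$. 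This gives $\HH^{2}(A)=\bigoplus_{\rho}V_{\rho}$ as $\HH^{1}(A)$-modules.

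For indecomposability I would first isolate inside each $V_{\rho}$ the subspace $T_{\rho}=\langle\rho\|\alpha^{(s)}:\alpha^{(s)}\in{}_{0}\mathcal{B}_{\omega}-Z\rangle$ of trivial weight vectors. A short computation with (\ref{eq:corchete}) shows that the generators $x_{j},w_{pq},z_{us},y_{i}$ all annihilate $T_{\rho}$ while each $t_{k}$ acts by a scalar, so $T_{\rho}$ is an $\HH^{1}(A)$-submodule and the quotient $V_{\rho}/T_{\rho}$ is the standard $sl_{a}(\mathbb{C})$-module $S_{\rho}=\langle\rho\|\alpha^{(u)}:\alpha^{(u)}\in Z\rangle$. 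Now take an idempotent $e\in\mathrm{End}_{\HH^{1}(A)}(V_{\rho})$: it preserves $T_{\rho}$, and by Schur's lemma (trivially when $a=1$, since $S_{\rho}$ is then one-dimensional) the induced map on $V_{\rho}/T_{\rho}$ is a scalar $\lambda$. The crucial identity $[z_{us},\rho\|\alpha^{(u)}]=\rho\|\alpha^{(s)}$ — which holds because the correction term of (\ref{eq:corchete}) involving $W_{\rho}$ vanishes, the first arrow of the length $\geq 2$ branch $W_{\rho}$ never lying in $Z$ — then forces $e=\lambda\,\mathrm{id}$ on $T_{\rho}$ as well. Hence $e-\lambda\,\mathrm{id}$ maps $V_{\rho}$ into $T_{\rho}$ and vanishes on $T_{\rho}$, so $(e-\lambda\,\mathrm{id})^{2}=0$; as $e$ is diagonalizable this yields $e=\lambda\,\mathrm{id}$ with $\lambda\in\{0,1\}$, proving $V_{\rho}$ indecomposable.

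The two remaining claims are then quick. If $D=a$ then ${}_{0}\mathcal{B}_{\omega}=Z$, so $T_{\rho}=0$ and $V_{\rho}=S_{\rho}$ is the standard representation of $sl_{a}(\mathbb{C})\subseteq\HH^{1}(A)$, which is irreducible; hence every $V_{\rho}$ is irreducible. For the converse I argue contrapositively: assume a monomial relation $\sigma$ from $0$ to $\omega$ exists and that $D>a$, and pick $\alpha^{(s)}\in{}_{0}\mathcal{B}_{\omega}-Z$. Since the differential $D_{1}$ produces only non-monomial relations $\rho_{i}$, the block spanned by $\sigma\|{}_{0}\mathcal{B}_{\omega}$ meets $\im D_{1}$ trivially, so the class $\sigma\|\alpha^{(s)}$ is nonzero in $\HH^{2}(A)$; being a trivial weight vector it spans a one-dimensional submodule, which is proper because $\dim_{\mathbb{C}}V_{\sigma}=D>1$. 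Thus $V_{\sigma}$ is reducible, and the contrapositive gives $D=a$.

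The main obstacle is the indecomposability step: one must verify that $T_{\rho}$ is genuinely an $\HH^{1}(A)$-submodule and, above all, that the linking map $z_{us}$ sends the standard block onto the trivial block with no unwanted correction, so that the endomorphism-ring computation closes up. The apparent subtlety in the directness of $\bigoplus_{\rho}V_{\rho}$ for non-monomial relations sharing a branch is absorbed by invoking the already-established direct sum of Theorem \ref{descH2sla}; the alternative, more computational route would be to check directly that the relations coming from $B_{1}$ and $B_{2}$ conspire to annihilate the shared trivial classes.
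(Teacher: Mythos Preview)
Your argument is largely sound, but there are a few points worth flagging, and the indecomposability step is done quite differently from the paper.

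\textbf{Indecomposability.} The paper's route is shorter: it observes that $\rho\|\alpha^{(1)}$ is a weight vector (of weight $L_{1}$) whose $\HH^{1}(A)$-orbit, via the identities $[\alpha^{(1)}\|\alpha^{(i)},\rho\|\alpha^{(1)}]=\rho\|\alpha^{(i)}$, spans all of $V_{\rho}$. Since the $L_{1}$-weight space of $V_{\rho}$ is one-dimensional, in any decomposition $V_{\rho}=A\oplus B$ the vector $\rho\|\alpha^{(1)}$ lies entirely in one summand, which then must be everything. Your endomorphism-ring argument is a valid alternative and is in some sense more robust, but two of your intermediate claims are inaccurate: the generators $y_{i}$ need not annihilate $T_{\rho}$ (if $\rho$ is the monomial relation on branch $i$, formula (\ref{eq:corchete}) gives $[y_{i},\rho\|\alpha^{(s)}]=-\rho\|\alpha^{(s)}$), and $t_{k}$ acts diagonally on $T_{\rho}$ but by different scalars on different basis vectors, not by a single scalar. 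Fortunately, all you actually use is that $T_{\rho}$ is stable and that $z_{us}$ kills $T_{\rho}$, both of which are correct, so the argument survives.

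\textbf{Directness of $\bigoplus_{\rho}V_{\rho}$.} Your appeal to Theorem~\ref{descH2sla} does not do the work you claim. That theorem indexes the standard summands by relations $\rho$, but the trivial summands are indexed by the set $E$, not by $\rho$; there is no canonical assignment of trivial pieces to individual $\rho$'s, so one cannot conclude that the $T_{\rho}$'s are independent in $\HH^{2}(A)$. Concretely, when a branch $\alpha^{(h)}\in X$ appears in several non-monomial relations, the $B_{2}$-element $\sum_{i}\rho_{i}\|b_{ih}\alpha^{(h)}$ lies in $\im D_{1}$ while its individual summands need not, so the classes $\rho_{i}\|\alpha^{(h)}$ can become linearly dependent across different $V_{\rho_{i}}$'s. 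The paper itself only asserts the directness as ``clear'' without further argument, so you are not worse off than the original; but your stated justification is not correct, and the ``alternative computational route'' you mention at the end is in fact what would be needed.

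The irreducibility for $D=a$ and its partial converse are handled essentially as in the paper.
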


\begin{proof}
It is clear that $\displaystyle \HH^{2}(A)=\sum_{\rho:s(\rho)=0, t(\rho)=\omega}\!\!V_{\rho}\ $ and that $\ V_{\rho}\cap \Sigma_{\rho'\neq \rho}V_{\rho'}=\{0\}$ for every $\rho$ from $0$ 
to $\omega$.
Let us now see that $V_{\rho}$ is indecomposable for every relation $\rho$ from $0$ to $\omega$.
We know that, since $a$ is positive, there exists at least one arrow from $0$ to $\omega$ which will be called $\alpha^{(1)}$. We assert that $\rho\|\alpha^{(1)}$ is not zero 
in $\HH^{2}(A)$ since $\rho\|\alpha^{(1)}$ does not belong to $Im(D_{1})$. In the particular case where $D=1$, we have $dim V_{\rho}=1$ 
and $V_{\rho}$ es indecomposable.
In case $D>1$, given $\alpha^{(i)}\in {}_{0}\mathcal{B}_{\omega}$ with $i\neq 1$:
\[[\alpha^{(1)}\|\alpha^{(i)}, \rho\|\alpha^{(1)}]=\rho\|\alpha^{(i)}.\]
This equality implies that the orbit of the action of $\HH^{1}(A)$ on $\rho\|\alpha^{(1)}$ is $V_{\rho}$ and then $V_{\rho}$ is indecomposable.

We will next prove that if $D=a$, then $V_{\rho}$ is irreducible for every relation $\rho$ from $0$ to $\omega$.
Since $D=a$, using Theorem \ref{lie}, we know that $\HH^{1}(A)=A''_{1}\bigoplus sl_{a}(\mathbb{C})$. Let us call $\widetilde{V_{\rho}}$ the representation of the Lie 
subalgebra $sl_{a}(\mathbb{C})$ with underlying vector space $V_{\rho}$. Using Theorem \ref{descH2sla}, there is an isomorphism $\widetilde{V_{\rho}}\simeq V$ where $V$ is the 
standard representation of $sl_{a}(C)$ and as we already know, it is irreducible. It remains to prove that $V_{\rho}$ is also irreducible. Any non trivial subrepresentation 
of $\HH^{2}(A)$ contained in $V_{\rho}$ would also be a non zero subrepresentation of $sl_{a}(\mathbb{C})$ contained in $\widetilde{V_{\rho}}$ and that is absurd.
Finally, we will prove that if there exists a monomial relation $\rho$ from $0$ to $\omega$ and $V_{\rho}$ is irreducible, then $D=a$.
Suppose that $D\neq a$. In this case there exists $\alpha^{(i)}\in {}_{0}\mathcal{B}_{\omega}-Z$ such that $\rho\|\alpha^{(i)}$ is not zero in $\HH^{2}(A)$.
Observe that $H:=\langle\rho\|\alpha^{(i)}\rangle$ is a representation of $\HH^{1}(A)$ contained in $V_{\rho}$, since given $\alpha\|c\in \HH^{1}(A)$, 
Eq. (\ref{eq:corchete}) implies that,

\[[\alpha\|c, \rho\|\alpha^{(i)}]=\delta_{\alpha, \overleftarrow{{}_{1}(\alpha^{(i)}) }}\rho\|\alpha^{(i)}-\delta_{\alpha,\overleftarrow{{}_{1}\rho}}\rho\|\alpha^{(i)},\]
but $a$ is positive and $V_{\rho}$ is irreducible so this leads us to a contradiction and we conclude that $D=a$.
\end{proof}

\subsection{$\HH^{i}(A),\ i\geq 3$ as representation of $\HH^{1}(A)$}

\label{Hncomorep}

Fix $i>2$. Given $\alpha\in Q_{1}$ and $b$ a non zero path in $A$ such that $0=s(\alpha)=s(b)$ and $t(\alpha)=t(b)$,
consider $\alpha\|b\in \HH^{1}(A)$. Also, 
given an $(i-1)$-ambiguity $u$ from $0$ to $\omega$ and a path $c$ in ${}_{0}\mathcal{B}_{\omega}$ consider 
that, as usual,  we will identify with its class $u\|c\in \HH^{i}(A)$. 
Next we will compute the Gerstenhaber bracket of these two elements. Given an $(i-1)$-ambiguity $w$,
\begin{align*}
[\alpha\|b, u\|c](w)&=\varphi_{i}^{*}[\eta_{1}^{*}(\alpha\|b), \eta_{i}^{*}(u\|c)](w)\\&=[\eta_{1}^{*}(\alpha\|b), \eta_{i}^{*}(u\|c)]\varphi_{i}(1\otimes w\otimes 1)\\&=\eta_{1}^{*}(\alpha\|b)\circ \eta_{i}^{*}(u\|c)(\varphi_{i}(1\otimes w\otimes 1))-\eta_{i}^{*}(u\|c)\circ \eta_{1}^{*}(\alpha\|b)(\varphi_{i}(1\otimes w\otimes 1)).
\end{align*}

The first term is equal to
\begin{align*}
\eta_{1}^{*}(\alpha\|b)(\eta_{i}^{*}(u\|c)(\varphi_{i}(1\otimes w\otimes 1)))&=\eta_{1}^{*}(\alpha\|b)((u\|c)\eta_{i}(\varphi_{i}(1\otimes w\otimes 1))\\&=\eta_{1}^{*}(\alpha\|b)(u\|c)(1\otimes w\otimes 1)\\&=\delta_{u,w}\eta_{1}^{*}(\alpha\|b)(c)\\&=\delta_{u,w}(\alpha\|b)\sum_{c}c^{(1)}\otimes \overleftarrow{c^{(2)}}\otimes c^{(3)}\\&=\left\{
                                                                                                                                     \begin{array}{ll}
                                                                                                                                       \delta_{w,u}c^{\alpha\|b}, & \hbox{if $\alpha=\overleftarrow{{}_{1}c}$;} \\
                                                                                                                                      0 , & \hbox{otherwise.}
                                                                                                                                     \end{array}
                                                                                                                                   \right.
\end{align*}

Observe that in the third equality we have used Proposition \ref{lema2}.
Let us compute the second term. By Proposition \ref{lema1}, we know that $\varphi_{i}(1\otimes w\otimes 1)$ has the following form,
\[\sum_{a^{(1)}\dots a^{(i+1)}=w}1\otimes \overline{a^{(1)}}\otimes\dots \otimes \overline{a^{(i)}}\otimes a^{(i+1)}.\]
Thus,\\
$\displaystyle \eta_{i}^{*}(u\|c)\circ \eta_{1}^{*}(\alpha\|b)(\varphi_{i}(1\otimes w\otimes 1))=\sum_{a^{(1)}\dots a^{(i+1)}=w}\eta_{i}^{*}(u\|c)\circ \eta_{1}^{*}(\alpha\|b)(1\otimes \overline{a^{(1)}}\otimes\dots \otimes \overline{a^{(i)}}\otimes a^{(i+1)})=\sum_{a^{(1)}\dots a^{(i+1)}=w}\eta_{i}^{*}(u\|c)\sum_{j=1}^{i}((\overline{a^{(1)}}\otimes\dots\otimes\eta_{1}^{*}(\alpha\|b)(\overline{a^{(j)}})\otimes\dots \otimes \overline{a^{(i)}}) a^{(i+1)}).$\\
Since $\alpha$ is the first arrow of some branch, we know that $\eta_{1}^{*}(\alpha\|b)(\overline{a^{(j)}})=0$ for $j\neq 1$, so the last expression equals 
\[\sum_{a^{(1)}\dots a^{(i+1)}=w}\eta_{i}^{*}(u\|c)((\eta_{1}^{*}(\alpha\|b)(\overline{a^{(1)}})\otimes\dots \otimes \overline{a^{(i)}}) a^{(i+1)})\] 
which, by definition of $\eta^{\ast}$ and using again that $\alpha$ is the first arrow of some branch, is
\[\delta_{\alpha, \overleftarrow{{}_{1}\omega}}\sum_{a^{(1)}\dots a^{(i+1)}=w}\eta_{i}^{*}(u\|c)(( \overline{a^{(1)}}\otimes\dots \otimes \overline{a^{(i)}}) a^{(i+1)}).\]
By Proposition \ref{prop4.6} we have that,
\begin{align*}
\delta_{\alpha, \overleftarrow{{}_{1}\omega}}\!\! \! \!\! \! \sum_{a^{(1)}\dots a^{(i+1)}=w} \! \! \! \! \!\! \! \!\! \! \eta_{i}^{*}(u\|c)(( \overline{a^{(1)}}\otimes\dots \otimes \overline{a^{(i)}}) a^{(i+1)})&=\delta_{\alpha, \overleftarrow{{}_{1}\omega}}(u\|c)\eta_{i}(\! \!\! \!\! \! \! \!\! \!  \sum_{a^{(1)}\dots a^{(i+1)}=w}\! \!\! \! \! \! \!\!\! \! (1\otimes\overline{ a^{(1)}}\otimes\dots \otimes \overline{a^{(i)}}\otimes a^{(i+1)}))\\&=\delta_{\alpha, \overleftarrow{{}_{1}\omega}}(u\|c)(1\otimes w\otimes 1)\\&=\delta_{u,w}\delta_{\alpha, \overleftarrow{{}_{1}\omega}}c.
\end{align*}

Summarising, given $\alpha\|b\in \HH^{1}(A)$ and $u\|c\in \HH^{i}(A)$:
\[[\alpha\|b, u\|c]=\delta_{\overleftarrow{{}_{1}c},\alpha}u\|c^{\alpha\|b}-\delta_{\overleftarrow{{}_{1}u},\alpha}u\|c.\]

\noindent Theorems \ref{descH2sla} and \ref{descH2} can be adapted to the general case, that is, $i\geq 2$. 
We state them now and we omit the proofs since they are analogous to the case $i=2$.

\begin{theorem}\label{mainHn}
Consider a toupie algebra $A$ with $a> 0$. The decomposition of $\HH^{i}(A)$ as a representation of $sl_{a}(\mathbb{C})$ is:
\[\HH^{i}(A)\simeq\bigoplus_{i=1}^{D-a} V_{0}~~\oplus \bigoplus_{u:s(u)=0, t(u)=\omega} V,\]
where $V_{0}\simeq \mathbb{C}$ is the trivial representation and $V$ is the standard representation of $sl_{a}(\mathbb{C})$.
\end{theorem}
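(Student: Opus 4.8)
The plan is to follow verbatim the strategy of Theorem~\ref{descH2sla}, simply replacing the relations $\rho$ from $0$ to $\omega$ by the $(i-1)$-ambiguities $u$ from $0$ to $\omega$. First I would fix the basis of $\HH^{i}(A)$ provided by Theorem~\ref{teoremaHi}, namely the classes $u\|\alpha^{(k)}$ with $u\in{}_{0}(\mathcal{A}_{i-1})_{\omega}$ and $\alpha^{(k)}\in{}_{0}\mathcal{B}_{\omega}$, noting that for $i\geq 3$ there is no quotient to take since $D_{i}=0$. Using Propositions~\ref{sl} and~\ref{corosl} I identify $sl_{a}(\mathbb{C})$ with the subalgebra $\langle x_{j}, w_{pq}: p\neq q,\ p,q=1,\dots,a,\ j=2,\dots,a\rangle$ of $\HH^{1}(A)$ and I take as Cartan subalgebra $\mathfrak{h}=\langle x_{j}: j=2,\dots,a\rangle$. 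Every bracket will be read off from the formula $[\alpha\|b, u\|c]=\delta_{\overleftarrow{{}_{1}c},\alpha}\,u\|c^{\alpha\|b}-\delta_{\overleftarrow{{}_{1}u},\alpha}\,u\|c$ established just above the statement.

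The one genuinely new point, and the step I expect to carry the weight of the argument, is to check that the second term of this bracket disappears for every generator of $sl_{a}(\mathbb{C})$. Indeed, every $(i-1)$-ambiguity with $i-1\geq 2$ arises solely from the monomial reduction system $\mathfrak{R}_{1}$, and monomial relations sit on branches of length at least $2$; hence the first arrow $\overleftarrow{{}_{1}u}$ of such a $u$ is never one of the direct arrows of $Z$. Since each $sl_{a}(\mathbb{C})$-generator $\alpha\|b$ has $\alpha\in Z$, the factor $\delta_{\overleftarrow{{}_{1}u},\alpha}$ vanishes identically, so $sl_{a}(\mathbb{C})$ acts on $u\|\alpha^{(k)}$ only through the replacement term $\delta_{\overleftarrow{{}_{1}\alpha^{(k)}},\alpha}\,u\|(\alpha^{(k)})^{\alpha\|b}$. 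This reduces the whole computation to exactly the situation treated in Theorem~\ref{descH2sla}.

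With this in hand the weight analysis is routine. For $h=\sum_{p} b_{p}\,\alpha^{(p)}\|\alpha^{(p)}\in\mathfrak{h}$ I would get $[h, u\|\alpha^{(k)}]=b_{k}\,u\|\alpha^{(k)}$ when $\alpha^{(k)}\in Z$, giving weight $L_{k}$, and $[h, u\|\alpha^{(k)}]=0$ when $\alpha^{(k)}\in{}_{0}\mathcal{B}_{\omega}-Z$, since then $\overleftarrow{{}_{1}\alpha^{(k)}}\notin Z$, giving weight $0$. Fixing $u$, the vector $u\|\alpha^{(1)}$ is a maximal vector of weight $L_{1}$: for $j<i$ one has $[w_{ij}, u\|\alpha^{(1)}]=\delta_{\alpha^{(1)},\alpha^{(i)}}\,u\|(\alpha^{(1)})^{\alpha^{(i)}\|\alpha^{(j)}}=0$ because $i\geq 2$. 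By the highest weight theory invoked in the proof of Theorem~\ref{descH2sla} ($L_{1}$ being the highest weight of the standard representation), the $sl_{a}(\mathbb{C})$-submodule generated by $u\|\alpha^{(1)}$ is $\langle u\|\alpha^{(j)}:\alpha^{(j)}\in Z\rangle\cong V$, while each remaining basis vector $u\|\alpha^{(k)}$ with $\alpha^{(k)}\in{}_{0}\mathcal{B}_{\omega}-Z$ is annihilated by all of $sl_{a}(\mathbb{C})$ and so spans a copy of the trivial representation $V_{0}$, of which there are $D-a=\#({}_{0}\mathcal{B}_{\omega}-Z)$ attached to each $u$. Assembling these pieces over all $(i-1)$-ambiguities $u$ from $0$ to $\omega$ and using complete reducibility of finite-dimensional $sl_{a}(\mathbb{C})$-representations yields the stated decomposition. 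I emphasize that the only obstruction to a literal transcription of the $i=2$ proof is the combinatorial observation that $u$ never begins with an arrow of $Z$; once that is secured, every weight computation and the identification of the standard and trivial constituents is identical to the $\HH^{2}$ case, which is why the argument may be safely compressed.
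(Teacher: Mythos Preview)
Your proposal is correct and takes precisely the approach the paper intends: the paper omits the proof entirely, stating that it is analogous to that of Theorem~\ref{descH2sla}, and you have written out that analogy faithfully, including the one point that needs checking in passing from relations to $(i-1)$-ambiguities (namely that $\overleftarrow{{}_{1}u}\notin Z$, which follows since all ambiguities arise from $\mathfrak{R}_{1}$). There is nothing to add.
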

\begin{remark}
Given an $(i-1)$-ambiguity $u$ from $0$ to $\omega$, the $\mathds{k}$-vector subspace of 
$\HH^{i}(A)$ $V_{u}=\langle u\|\alpha^{(k)} ~~|~~\alpha^{(k)}\in {}_{0}\mathcal{B}_{\omega} \rangle$ is a Lie subrepresentation of $\HH^{1}(A)$.
\end{remark}
\begin{theorem}
Let $A$ be a toupie algebra with $a>0$. The decomposition of $\HH^{i}(A)$ as a Lie representation of  $\HH^{1}(A)$ is the following:
\[\HH^{i}(A)=\bigoplus_{u:s(u)=0, t(u)=\omega}V_{u}\]
where $V_{u}=\langle u\|\alpha^{(k)} ~~|~~\alpha^{(k)}\in{}_{0}\mathcal{B}_{\omega}\rangle $. Besides, note that for every $(i-1)$-ambiguity $u$ from $0$ to $\omega$, 
the module $V_{u}$ is indecomposable.
The representation $V_{u}$ is irreducible if and only if $D=a$.
\end{theorem}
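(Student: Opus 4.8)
The plan is to run the argument of Theorem~\ref{descH2} verbatim, with the relation $\rho$ replaced by an $(i-1)$-ambiguity $u$ from $0$ to $\omega$, using the bracket formula
\[[\alpha\|b,\, u\|c]=\delta_{\overleftarrow{{}_{1}c},\alpha}\,u\|c^{\alpha\|b}-\delta_{\overleftarrow{{}_{1}u},\alpha}\,u\|c\]
established just before the statement, together with the $sl_{a}(\mathbb{C})$-decomposition of Theorem~\ref{mainHn}. First I would record that, by Theorem~\ref{teoremaHi}, for $i\geq 3$ one has $\HH^{i}(A)=\Hom_{E^{e}}(\mathds{k}\mathcal{A}_{i-1},A)$, so every $u\|\alpha^{(k)}$ is automatically a nonzero class (there are no coboundaries to factor out). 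Since $\{u\|\alpha^{(k)}\}$ is a basis and each basis vector lies in a single $V_{u}$, while both summands of $[\alpha\|b,u\|c]$ retain $u$ as first factor, each $V_{u}$ is an $\HH^{1}(A)$-subrepresentation and $\HH^{i}(A)=\bigoplus_{u}V_{u}$ as representations, exactly as at the start of Theorem~\ref{descH2}.

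For indecomposability, since $a>0$ I fix an arrow $\alpha^{(1)}\in Z$. Set $T=\langle u\|\alpha^{(s)}:\alpha^{(s)}\in{}_{0}\mathcal{B}_{\omega}-Z\rangle$. The bracket formula shows that $T$ is a subrepresentation (each generator is sent by any $\alpha\|b$ to a scalar multiple of itself) and that $[\alpha^{(1)}\|\alpha^{(s)},\,u\|\alpha^{(1)}]=u\|\alpha^{(s)}$ and $[\alpha^{(1)}\|\alpha^{(k)},\,u\|\alpha^{(1)}]=u\|\alpha^{(k)}$ for $\alpha^{(k)}\in Z$, the second Kronecker delta vanishing because $u$, being an ambiguity, is not an arrow and so $\overleftarrow{{}_{1}u}\neq\alpha^{(1)}$. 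Hence any vector with nonzero image in $V_{u}/T$ generates $V_{u}$, so every proper submodule is contained in $T$; thus $T$ is the unique maximal submodule, $V_{u}/T\cong V$ is simple (the standard representation, one-dimensional when $a=1$), and a module with simple top is indecomposable. When $D=1$ the space $V_{u}$ is one-dimensional and the claim is trivial.

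For the equivalence, the forward implication copies the corresponding step of Theorem~\ref{descH2}: if $D=a$ then ${}_{0}\mathcal{B}_{\omega}=Z$, Theorem~\ref{mainHn} leaves no trivial summands so $V_{u}\cong V$ as $sl_{a}(\mathbb{C})$-module, and Theorem~\ref{lie} gives $\HH^{1}(A)=\langle C''_{1}\rangle\oplus sl_{a}(\mathbb{C})$ with $\langle C''_{1}\rangle$ central by Proposition~\ref{centro}; any nonzero $\HH^{1}(A)$-subrepresentation of $V_{u}$ is in particular an $sl_{a}(\mathbb{C})$-subrepresentation of the irreducible $V$, hence all of $V_{u}$. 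For the converse I argue contrapositively: if $D>a$, I choose $\alpha^{(i')}\in{}_{0}\mathcal{B}_{\omega}-Z$, and, feeding each basis vector of $\HH^{1}(A)$ into the bracket formula, I show that $[\,\cdot\,,u\|\alpha^{(i')}]$ always returns a scalar multiple of $u\|\alpha^{(i')}$, so that $\mathbb{C}\cdot u\|\alpha^{(i')}$ is a proper nonzero subrepresentation and $V_{u}$ is reducible.

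The hard part will be this last verification that $\mathbb{C}\cdot u\|\alpha^{(i')}$ is genuinely $\HH^{1}(A)$-stable, since a priori the first term $\delta_{\overleftarrow{{}_{1}\alpha^{(i')}},\alpha}\,u\|(\alpha^{(i')})^{\alpha\|b}$ could yield $u\|(\text{some other branch})$. Here the toupie hypothesis is essential: each vertex is the source and target of a unique arrow, so the initial arrow $\alpha^{j}_{0}$ determines its branch, and the substitution $(\alpha^{(i')})^{\alpha\|b}$ can only return a multiple of $\alpha^{(i')}$ or $0$. Tracking which basis vectors of $\HH^{1}(A)$ actually contribute (only the $t_{k}$ with $\alpha^{(i')}\in Q_{\rho}^{k}$ and the $y_{j}$ with $\overleftarrow{{}_{1}u}=\alpha^{j}_{0}$, acting by $\pm1$, while all $w_{pq},z_{us},x_{j}$ act by $0$, the last using that every ambiguity arises from $\mathfrak{R}_{1}$ so $\overleftarrow{{}_{1}u}\notin Z$) is the only genuinely computational ingredient. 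This also explains why, unlike the $i=2$ statement of Theorem~\ref{descH2}, no assumption on the existence of a monomial relation from $0$ to $\omega$ is needed: for $i\geq 3$ the class $u\|\alpha^{(i')}$ is automatically nonzero in $\HH^{i}(A)$.
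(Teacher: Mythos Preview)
Your proof is correct and follows exactly the route the paper intends: it is the analogue of Theorem~\ref{descH2} with $\rho$ replaced by an $(i-1)$-ambiguity $u$, and you correctly identify the reason the ``only if'' direction becomes unconditional for $i\ge 3$ (no coboundaries, so $u\|\alpha^{(i')}$ is automatically a nonzero class). One step deserves a word more of justification: from the computations $[\alpha^{(1)}\|\alpha^{(s)},u\|\alpha^{(1)}]=u\|\alpha^{(s)}$ and $[\alpha^{(1)}\|\alpha^{(k)},u\|\alpha^{(1)}]=u\|\alpha^{(k)}$ you conclude that \emph{any} vector with nonzero image in $V_u/T$ generates $V_u$, but you only exhibited this for $u\|\alpha^{(1)}$ itself. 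The missing observation is that the elements $z_{1s}$ and $w_{1k}$ \emph{annihilate} $T$ (your own check that each generator of $T$ is sent to a scalar multiple of itself in fact gives zero for these particular elements), so if $v\notin T$ then $M+T=V_u$ by simplicity of $V_u/T$, writing $u\|\alpha^{(1)}=m+t$ with $m\in M$, $t\in T$ yields $u\|\alpha^{(s)}=[z_{1s},m]\in M$, hence $T\subseteq M$ and $M=V_u$. With that one line added, your indecomposability argument via the unique maximal submodule is in fact cleaner than the paper's ``orbit'' phrasing in the $i=2$ case.
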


We end this article with an example, for which we compute the whole structure.

\newpage

\begin{example}
Consider the toupie algebra $A=\mathds{k}Q/I$ where $Q$ is the quiver bellow with $\#Q_{0}=13$ and $\#Q_{1}=15$:
\begin{center}
\begin{figure}[hhh]
\includegraphics[scale=0.62]{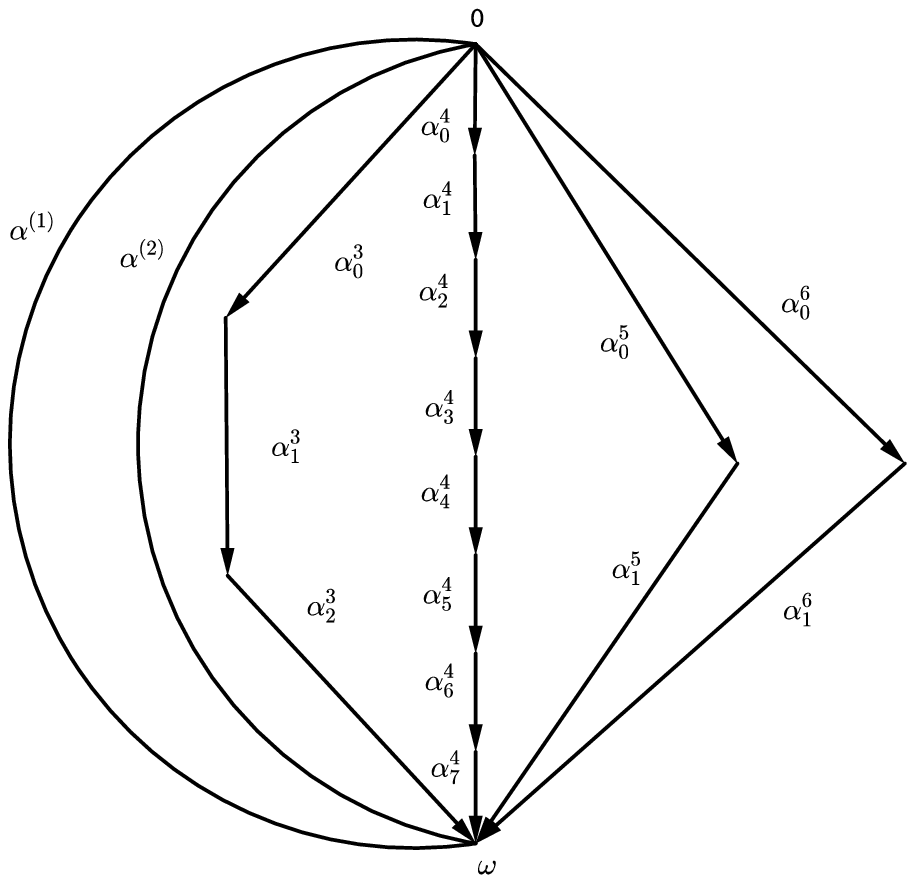}
\end{figure}
\end{center}
\noindent Let $\rho'_{1}=\alpha_{0}^{5}\alpha_{1}^{5}-\alpha_{0}^{6}\alpha_{1}^{6}$ and $\sigma_{i}=\alpha_{i}^{4}\alpha_{i+1}^{4}\alpha_{i+2}^{4}\alpha_{i+3}^{4}$ with $i=0, \dots, 4$
and $I=\langle\rho'_{1}, \sigma_{i}: i=0, \dots ,4\rangle$.
Note that there are two arrows from $0$ to $\omega$, there is one branch  not involved in any relation, one branch  with monomial relations and two branches involved in a non 
monomial relation. This means that $D=4$, $a=2$, $l=1$, $m=1$ and $n=2$.

There are four $2$-ambiguities, $\alpha_{0}^{4}\alpha_{1}^{4}\alpha_{2}^{4}\alpha_{3}^{4}\alpha_{4}^{4}$, $\alpha_{1}^{4}\alpha_{2}^{4}\alpha_{3}^{4}\alpha_{4}^{4}\alpha_{5}^{4}$, 
$\alpha_{2}^{4}\alpha_{3}^{4}\alpha_{4}^{4}\alpha_{5}^{4}\alpha_{6}^{4}$  and $\alpha_{3}^{4}\alpha_{4}^{4}\alpha_{5}^{4}\alpha_{6}^{4}\alpha_{7}^{4}$ and one $3$-ambiguity $\alpha^{(4)}$, for $j\geq 4$ the set of $j$-ambiguities is empty.

The Hochschild cohomology is the following: 
\begin{itemize}
\item $\displaystyle HH^{0}(A)=\langle\sum_{i\in Q_{0}}e_{i}\| e_{i}\rangle$

\item 
$HH^{1}(A)$ we obtain that
$HH^{1}(A)=\langle y_{4}, w_{12}, w_{21}, x_{2}, z_{13}, z_{23}, z_{16}, z_{26}, t_{1}, t_{2} \rangle$, where we have used the previously defined notation.
\item $HH^{2}(A)=\langle\overline{\rho'_{1}\|\alpha^{(1)}}, \overline{\rho'_{1}\|\alpha^{(2)}}, \overline{\rho'_{1}\|\alpha^{(3)}}\rangle$ 
\item $HH^{3}(A)=0$ 
\item $HH^{4}(A)=\langle \overline{\alpha^{(4)}\|\alpha^{(1)}}, \overline{\alpha^{(4)}\|\alpha^{(2)}}, \overline{\alpha^{(4)}\|\alpha^{(3)}}, \overline{\alpha^{(4)}\|\alpha^{(6)}} \rangle$
\item $HH^{i}(A)=0$ for $i>4$.
\end{itemize}
The decomposition of the Lie algebra $HH^{1}(A)$ is the following:
$$HH^{1}(A)\cong \langle y_{4}\rangle\oplus sl_{2}(\mathbb{C})\ltimes (\langle t_{1}, t_{2}\rangle\ltimes \langle z_{13}, z_{23}, z_{16}, z_{26}\rangle ) $$

The non-null Gerstenhaber brackets of $HH^{1}(A)$ with $HH^{2}(A)$ are the following:
 \begin{multicols}{3}
 \item $[w_{12},\overline{\rho'_{1}\|\alpha^{(1)}} ]=\overline{\rho'_{1}\|\alpha^{(2)}}$
\item $[x_{2},\overline{\rho'_{1}\|\alpha^{(1)}}]=-\overline{\rho'_{1}\|\alpha^{(1)}}$
\item $[z_{13}, \overline{\rho'_{1}\|\alpha^{(1)}}]=\overline{\rho'_{1}\|\alpha^{(3)}}$
\item $[t_{2}, \overline{\rho'_{1}\|\alpha^{(1)}}]=-\overline{\rho'_{1}\|\alpha^{(1)}}$
\item $[w_{21},\overline{\rho'_{1}\|\alpha^{(2)}}]=\overline{\rho'_{1}\|\alpha^{(1)}}$
\item $[x_{2}, \overline{\rho'_{1}\|\alpha^{(2)}}]=\overline{\rho'_{1}\|\alpha^{(2)}}$
\item $[z_{23}, \overline{\rho'_{1}\|\alpha^{(2)}}]=\overline{\rho'_{1}\|\alpha^{(3)}}$
\item $[t_{2}, \overline{\rho'_{1}\|\alpha^{(2)}}]=-\overline{\rho'_{1}\|\alpha^{(2)}}$
\item $[t_{1}, \overline{\rho'_{1}\|\alpha^{(3)}}]=\overline{\rho'_{1}\|\alpha^{(3)}}$
\item $[t_{2}, \overline{\rho'_{1}\|\alpha^{(3)}}]=-\overline{\rho'_{1}\|\alpha^{(3)}}$
\end{multicols}
\newpage
The non-null Gerstenhaber brackets of $HH^{1}(A)$ with $HH^{4}(A)$ are the following:
\begin{multicols}{3}
\item $[y_{4}, \overline{\alpha^{(4)}\|\alpha^{(1)}}]=-\overline{\alpha^{(4)}\|\alpha^{(1)}}$
\item $[w_{12}, \overline{\alpha^{(4)}\|\alpha^{(1)}}]=\overline{\alpha^{(4)}\|\alpha^{(2)}}$
\item $[x_{2}, \overline{\alpha^{(4)}\|\alpha^{(1)}}]=-\overline{\alpha^{(4)}\|\alpha^{(1)}}$
\item $[z_{13}, \overline{\alpha^{(4)}\|\alpha^{(1)}}]=\overline{\alpha^{(4)}\|\alpha^{(3)}}$
\item $[z_{16}, \overline{\alpha^{(4)}\|\alpha^{(1)}}]=\overline{\alpha^{(4)}\|\alpha^{(6)}}$
\item $[y_{4}, \overline{\alpha^{(4)}\|\alpha^{(2)}}]=-\overline{\alpha^{(4)}\|\alpha^{(2)}}$
\item $[w_{21}, \overline{\alpha^{(4)}\|\alpha^{(2)}}]=\overline{\alpha^{(4)}\|\alpha^{(1)}}$
\item $[x_{2}, \overline{\alpha^{(4)}\|\alpha^{(2)}}]=\overline{\alpha^{(4)}\|\alpha^{(2)}}$
\item $[z_{23}, \overline{\alpha^{(4)}\|\alpha^{(2)}}]=\overline{\alpha^{(4)}\|\alpha^{(3)}}$
\item $[z_{26}, \overline{\alpha^{(4)}\|\alpha^{(2)}}]=\overline{\alpha^{(4)}\|\alpha^{(6)}}$
\item $[y_{4},\overline{\alpha^{(4)}\|\alpha^{(3)}}]=-\overline{\alpha^{(4)}\|\alpha^{(3)}}$
\item $[t_{1}, \overline{\alpha^{(4)}\|\alpha^{(3)}}]=\overline{\alpha^{(4)}\|\alpha^{(3)}}$
\item $[y_{4},\overline{\alpha^{(4)}\|\alpha^{(6)}}]=-\overline{\alpha^{(4)}\|\alpha^{(6)}}$
 \item $[t_{2}, \overline{\alpha^{(4)}\|\alpha^{(6)}}]=\overline{\alpha^{(4)}\|\alpha^{(6)}}$
\end{multicols}

\end{example}

\end{document}